\documentclass[12pt,a4paper]{amsart}

\usepackage[a4paper]{geometry}
\usepackage{amsmath,amsthm,amsfonts,amssymb,mathrsfs,amsbsy}

\usepackage{xcolor}
\usepackage{hyperref}
\usepackage{mathtools}
\mathtoolsset{showonlyrefs}
\usepackage{algpseudocode}
\usepackage{algorithm}

\newtheorem{theorem}{Theorem}

\newtheorem{lemma}{Lemma}
\newtheorem*{lemma*}{Lemma}
\newtheorem{remark}{Remark}
\newtheorem*{remark*}{Remark}
\newtheorem*{theorem*}{Theorem}

\numberwithin{equation}{section}
\numberwithin{theorem}{section}
\numberwithin{lemma}{section}

\title[The regional control of a fractional spatio-temporal SIR model]{The regional control of a fractional spatio-temporal SIR model}

\author[Sofwah Ahmad]{Sofwah Ahmad} 
\address[Sofwah Ahmad]{
Mohamed bin Zayed University of Artificial Intelligence, Abu Dhabi, UAE\\ 
\href{https://orcid.org/0000-0001-7641-7759}{orcid.org/0000-0001-7641-7759}}
\email{sofwah.ahmad@mbzuai.ac.ae, alaydrus.sofwah@gmail.com}

\begin{document}

    \begin{abstract}
        This paper investigates a regional optimal control problem for a nonlinear spatio-temporal epidemiological model involving fractional diffusion on a bounded domain. In this work, we consider a general form of disease transmission, and two types of control, vaccination and treatment. We establish the existence and uniqueness of global-in-time solutions to our proposed system. We also prove the existence of an optimal control that minimizes the infected individuals as well as the cost of vaccination and treatment. The necessary conditions for optimality are derived. We show that concentrating vaccination in a selected region of the domain can substantially mitigate the spread of the disease. Numerical simulations are given.
        
        \medskip
    
        
        
    
    \end{abstract}

\maketitle

    \section{{Introduction}}
In this paper, we are interested to study the regional optimal control strategy of the following system 
        \begin{align}
            \frac{\partial S}{\partial t}  &= -d_1(-\Delta_N)^{\sigma} S - SI\ \psi(I)+ f_1(S,I) - \chi_{\omega}(x) u_1 S,
            \label{Eq : FDE_control_system1}
            \\
            \frac{\partial I}{\partial t} &= -d_2(-\Delta_N)^{\sigma}  I + SI\ \psi(I)+ f_2(S,I) - u_2 I,  
            \label{Eq : FDE_control_system2}
        \end{align}
which describes the propagation of an infectious disease within a large population; $S(t,x)$ and $I(t,x)$ denote the densities of susceptibles and infected compartment at time $t>0$ and spatial position $x\in\Omega$, subject to initial conditions
        \begin{align}
            S(0,x)&=S_0, 
            \quad I(0,x)=I_0, 
            \qquad x\in\Omega,\label{Eq: IC_system}
        \end{align}
with $\Omega\subset \mathbb{R}^2$ bounded,  $u_1(t,x), u_2(t,x)$ are the controls (that denotes the vaccine and the treatment, respectively) whose space of functions will be specified later, and $\chi_\omega(x)$ is the characteristic function of $\omega\subset\Omega$, defined as  
    \begin{align}
        \chi_\omega(x) =
            \begin{cases}
            1, & x \in \omega, \\
            0, & x \notin \omega.
            \end{cases}
    \end{align}



We notice that when $u_1=u_2=d_1=d_2=0$, $f_1(S,I)=0$, $f_2(S,I)=-\gamma I$ and $\psi(I)=\beta$ with $\gamma,\beta\in\mathbb{R}^+$, the model is reduced to the well-known Kermack and Mc-Kendrick model {\cite{Kermack1927}} with additional equation in the system
    $$\frac{dR}{dt}=\gamma I,$$
where $R(t)$ represents the removed compartment (\textit{i.e} the indiviuals who are either immune, recovered or passed away). However, to study the model, we need to focus only on the first two equations as the $R$ compartment can be studied once the results for $S$ and $I$ are obtained.

More variations of SIR model has been developed to take into account many factors such as natural birth and death rates as well as the type of interaction that gives rise to different power of the nonlinearities leading to more extensive expression in the right-hand side of \eqref{Eq : FDE_control_system1}-\eqref{Eq : FDE_control_system2}.

For example, if we consider the SIRS model by Lahrouz et. al \cite{LAHROUZ20126519} but omitting the possibilities of reinfection, then we have the system \eqref{Eq : FDE_control_system1}-\eqref{Eq : FDE_control_system2} with $d_1=d_2=u_1=u_2=0$, and 
    $$f_1(S,I) = (1-p) b-\mu S, \quad
    f_2(S,I)= -(\mu+c+\alpha) I,\quad
    \psi(I)=\frac{\beta}{\phi(I)}.$$
Here, $b$ denotes the new individuals entering the population, $p\in[0,1]$ denotes the portion of individuals who are vaccinated, hence $(1-p)$ denote the individuals who are susceptible, $\mu$ is the natural death rate, $c$ is the death rate caused by the disease, and $\alpha$ is the recovery rate.  In this model, the transmission of the disease is governed by $\frac{\beta S I}{\phi(I)}$ where the authors assume $\phi$ to be a positive function such that $\phi(0)=1$ and $\phi'>0$.

However, since mobility between individuals in the population certainly contributes to the rate of contact between individuals and therefore to the spread of the disease, it is necessary to take into account the spatial distribution in the model. This can be done by incorporating the diffusion terms with $d_1\ne 0$ and $d_2\ne 0$ in the system \eqref{Eq : FDE_control_system1}-\eqref{Eq : FDE_control_system2}. In this work, we consider the fractional power $\sigma\in (0,1]$ (that shall be precised later) to take into account the possibility of certain individuals with high mobility to jump to remote sites.
We notice that the classical spatio-temporal case where $\sigma=1$ has been well studied in the literature before for epidemiological study and reaction diffusion system in general.
Capasso \cite{Capasso1978} studied the system with $d_1\ne d_2$, $u_1=u_2=0$, $f_1=0$, $f_2=-\lambda I$, $\lambda>0$, with the interaction term being $SI\psi(I)=\Psi(I)S$ where $\Psi(I)$ is smooth bounded nonlinear function. The boundary conditions considered is of Neumann type;
using semigroup theory, he showed the existence of a strong global solution. 
Webb \cite{WEBB1981} studied the similar system as Capasso{\cite{Capasso1978}} with $\psi(I)=a>0$ equipped with Dirichlet boundary conditions. Using semigroup theory and Lyapunov method, he proved the existence of solutions in spatially inhomogeneous case, and prove the convergence of $S(x,t)$ to $S_\infty>0$ and $I(x,t)$ to $0$ as $t\to +\infty$. 
We need to highlight that later, Haraux and Kirane \cite{HarauxKirane1983} studied a more general version of model of \cite{WEBB1981} and established the uniform boundedness of solution in $C^1(\Omega)$.


In this work, rather focusing on modelling the disease progression, we aim to study the model that control the disease spread by adding the vaccination $u_1(x,t)$ and treatment (or medication) $u_2(x,t)$. Since national resources are typically limited during a pandemic, our goal is to control the spread of the disease by minimizing the number of infections while keeping vaccination and treatment costs as low as possible. Moreover, instead of adopting a nationwide vaccination strategy as was commonly implemented during the COVID-19 pandemic, where nearly the entire population was vaccinated, we propose a regional control approach, that is vaccinating only selected areas to effectively manage the disease outbreak, reduce the national burden, and limit potential side effects.



We notice the optimal control problem applied to spatio-temporal SIR model and its variations has been studied before. For example, 
Laaroussi et al. \cite{Laaroussi2018} considered an optimal vaccination control problem to a spatiotemporal SIR model to minimize the densitiy of the infected individuals and the cost for vaccination. 
Zhou et al \cite{Zhou2024} studied the optimal control problem for a reaction-diffusion SIRC model with cross-immunization and incorporated two control strategies into their proposed epidemic system. 
Ammi et al. \cite{SIDIAMMI2023} considered an optimal vaccination control problem to a spatiotemporal SIR model involving $p-$Laplacian.
Using semigroup theory and appropriate embedding, the authors established the existence and uniqueness of strong positive global solutions and established the optimal control strategy. 
Then, Laaroussi and Rachik \cite{Laaroussi2020} studied a regional optimal control problem of a reaction diffusion SIR model, which was first studied deeply by Zerrik, Boutoulout and El Jai \cite{zerrik}. They establish the necessary optimality condition for the optimal control problem.

Up until the writing of this this paper, to the best of the author knowledge, there is no existing literature known on a regional optimal control problem, applied to nonlinear model with fractional Laplacian. A study of control theory in linear reaction-diffusion systems involving fractional Laplacian has been studied before (\textit{e.g} in \cite{biccari2021}). However, as mentioned in \cite{biccari2021}, controlling the nonlinear dynamical system is rendered to be one of the most challenging open problem, that is still minimally explored. For this reason, this paper would like to answer this challenge and shall attempt to give interpretation through the simulation of our findings. 



    \subsection*{Model assumptions}

    For the well-posedness analysis developed in the sequel, we impose the following assumptions on the model \eqref{Eq : FDE_control_system1}--\eqref{Eq : FDE_control_system2}. Let  $Q_T=[0,T]\times\Omega$ and $Q_{\omega}=[0,T]\times\omega$.

        \begin{enumerate}
            \item The functions $f_1(S,I),f_2(S,I)$ are of the form 
                \begin{align*}
                    f_1(S,I)&= -a_1(x)S+b(x);\\
                    f_2(S,I)&= -a_2(x)I,
                \end{align*}
            where $a_1(x)$  death rate of natural causes for the susceptible class; $a_2(x)$ represents the total removal rate from the infected compartment, which consists of the recovery rate, the disease-induced death rate, and the natural death rate; and $b(x)$ represents the birth rate of the susceptible class. For biological reasons, we assume that $a_1, a_2, b\in L^\infty(\Omega)$ and are positive.
            \item $S(t,x)+I(t,x)\le N(t,x)$, where $N(t,x)$ denotes the total population density at time $t$ and spatial position $x$.
            \item The function $\psi$ satisfies $\psi\in C^1(\mathbb{R}^+)$.
        \end{enumerate}
        
        Under these assumptions, our main objective is to minimize the $I$ compartment and to reduce the necessary cost of control $u_1$ and $u_2$, that can be achieved by minimizing the following objective functional
                \begin{align}
                    J(I,u) =& \frac{\rho}{2}\|I\|^2_{L^2(Q_T)}
                    + \frac{\sigma_1}{2} \|u_1\|^2_{L^2(Q_\omega)}
                    + \frac{\sigma_2}{2} \|u_2\|^2_{L^2(Q_T)},
                    \label{Eq: Objective_Functional}
                \end{align}      
        where $\rho, \sigma_1,\sigma_2$ are weight parameters and $u_1(x,t)>0$ and $u_2(x,t)>0$ belong to the set of admissible controls 
            \begin{align}
                U_{ad}= \{(u_1,u_2)\in L^{\infty}(Q_{\omega})\times L^{\infty}(Q_T) \ : 0\le u_1\leq U_1 \text{ and } 0\le u_2 \le U_2 \ \ a.e\}
                \label{Eq: Uadmissible system 2}
            \end{align}
            
       The main objective will be achieved by characterizing the optimal control $u^*\in U_{ad}$ that minimizes \eqref{Eq: Objective_Functional}. 
       In Section \ref{sec:con_Pre} we recall the definitions and lemmas that will be used throughout the paper. In Section \ref{sec:con_existence}, we establish the existence of a unique positive and bounded strong global solution to problem \eqref{Eq : FDE_control_system1}-\eqref{Eq : FDE_control_system2} equipped with the initial condition \eqref{Eq: IC_system}. Then in Section \ref{sec:con_existence_op}, we establish the existence of the optimal solution for the control problem \eqref{Eq: Objective_Functional} subject to the fractional system \eqref{Eq : FDE_control_system1}-\eqref{Eq : FDE_control_system2}. We discuss the necessary optimality conditions ofthe problem \eqref{Eq : FDE_control_system1}-\eqref{Eq: Uadmissible system 2} in Section \ref{sec:con_necessary_op}.\\

    \section{{Preliminaries}}
    \label{sec:con_Pre}
    In this section, we precise the definition of the fractional laplacian $(-\Delta_N)^\sigma$ used in this paper, and some of the tools that will be needed in our discussion.
    We start by recalling the spectral definition of the fractional Laplace operatorassociated with the Neumann Laplacian. To this end, let $\Omega \subset \mathbb{R}^d$
    be a bounded open set with sufficiently smooth boundary. We denote by
    $\{\lambda_n\}_{n=0}^{\infty}$ the sequence of eigenvalues of the operator $-\Delta$ in $\Omega$ subject to the homogeneous Neumann boundary condition, with
        \begin{align*}
            0=\lambda_0 < \lambda_1 \leq \lambda_2 \leq \cdots,
            \qquad \lambda_n \to +\infty \quad \text{as } n\to\infty.
        \end{align*}
    Let $\{e_n\}_{n=0}^{\infty}$ be the corresponding normalized eigenfunctions in
    $L^2(\Omega)$, that is,
        \begin{align*}
            -\Delta e_n &= \lambda_n e_n, \quad &&x\in \Omega, \\
            \frac{\partial e_n}{\partial \eta} &= 0, \quad &&x\in \partial\Omega,
        \end{align*}
    where $\eta$ denotes the outward unit normal vector on $\partial\Omega$. 

    The family $\{e_n\}_{n=0}^\infty$ forms a complete orthonormal basis of $L^2(\Omega)$.
    
    For $0<s\le 1$, we define the set $H^s(\Omega)$ as
    \begin{align*}
        H^s(\Omega)
        :=
        \left\{
            u=\sum_{n=0}^{\infty} u_n e_n \in L^2(\Omega)
            :
            \sum_{n=0}^{\infty} \lambda_n^s |u_n|^2 <+\infty
        \right\},
    \end{align*}
    where
        \begin{align*}
            u_n = \int_{\Omega} u(x)e_n(x)\,dx.
        \end{align*}

    The spectral fractional Laplace operator $(-\Delta_N)^s$ is then defined by
        \begin{align*}
            (-\Delta_N)^s u
            :=
            \sum_{n=0}^{\infty} \lambda_n^s u_n e_n,
        \end{align*}
    with domain
        \begin{align*}
            D\big((-\Delta_N)^s\big)
            :=
            \left\{
                u=\sum_{n=0}^{\infty} u_n e_n \in L^2(\Omega)
                :
                \sum_{n=0}^{\infty} \lambda_n^{2s}|u_n|^2 <+\infty
            \right\}.
        \end{align*}
    In particular, since $\lambda_0=0$, constant functions belong to the kernel of $(-\Delta_N)^s$.

    To prove our result, we shall need some lemmas. First, using the properties of scalar products, we immediately have the following integration by parts formula for the fractional Laplacian.

    \begin{lemma}[Integration by parts]
        For $0<\sigma\le 1$ and $u,v\in D\left((-\Delta_N)^\sigma\right)$, the following identity holds 
            \begin{align}
                \int_{\Omega} u (-\Delta_N)^\sigma v \ dx =  \int_{\Omega} v (-\Delta_N)^\sigma u \ dx.
            \label{eq: int by parts lap}
            \end{align}
    \end{lemma}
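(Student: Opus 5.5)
The plan is to work entirely in the spectral picture, expanding both functions in the orthonormal Neumann eigenbasis $\{e_n\}_{n=0}^\infty$. Write $u=\sum_n u_n e_n$ and $v=\sum_n v_n e_n$ with $u_n=\int_\Omega u e_n\,dx$ and $v_n=\int_\Omega v e_n\,dx$. By the definition of $(-\Delta_N)^\sigma$ we have $(-\Delta_N)^\sigma v=\sum_n \lambda_n^\sigma v_n e_n$. Since $v\in D((-\Delta_N)^\sigma)$, we know $\sum_n \lambda_n^{2\sigma}|v_n|^2<\infty$, so this series converges in $L^2(\Omega)$ and $(-\Delta_N)^\sigma v\in L^2(\Omega)$. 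The same holds for $u$. In particular both sides of \eqref{eq: int by parts lap} are well-defined $L^2$ inner products.

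The key step is Parseval's identity for the complete orthonormal basis $\{e_n\}$. For $f,g\in L^2(\Omega)$ it gives $\int_\Omega f g\,dx=\sum_n f_n g_n$, with absolute convergence by Cauchy--Schwarz in $\ell^2$. Applying it with $f=u$ and $g=(-\Delta_N)^\sigma v$, whose coefficients are $\lambda_n^\sigma v_n$, yields
\begin{align*}
\int_\Omega u\,(-\Delta_N)^\sigma v\,dx=\sum_{n=0}^\infty \lambda_n^\sigma u_n v_n .
\end{align*}
Applying it with the roles of $u$ and $v$ exchanged gives the same series $\sum_n \lambda_n^\sigma v_n u_n$. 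Since the terms agree term by term, the two integrals coincide, which is \eqref{eq: int by parts lap}.

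The only point needing care is justifying the coefficient identification, i.e.\ that the $n$-th Fourier coefficient of $(-\Delta_N)^\sigma v$ is $\lambda_n^\sigma v_n$. This follows by passing to the $L^2$ limit of partial sums and using continuity of the inner product with $e_n$. The same continuity argument gives Parseval for the bilinear pairing, and the domain condition is exactly what makes these limits legitimate.

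If the functions are complex-valued, one replaces $v_n$ by $\overline{v_n}$ in the pairing. The eigenvalues $\lambda_n^\sigma$ are real, so the argument is unchanged. In the real setting of the paper this issue does not arise.

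We also note that the argument in fact needs only $u,v\in H^\sigma(\Omega)$, by writing $\sum_n \lambda_n^\sigma u_n v_n=\sum_n(\lambda_n^{\sigma/2}u_n)(\lambda_n^{\sigma/2}v_n)$, although the stated hypothesis is stronger. The main obstacle is therefore mild: it is checking convergence of the series, which the domain assumption supplies directly.
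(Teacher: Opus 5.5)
Your proof is correct and is the argument the paper has in mind when it says the identity follows ``using the properties of scalar products'': by Parseval in the Neumann eigenbasis, both sides equal $\sum_{n}\lambda_n^\sigma u_n v_n$. One caveat on your closing remark: if you assume only $u,v\in H^\sigma(\Omega)$, then $(-\Delta_N)^\sigma v$ need not lie in $L^2(\Omega)$, so the integrals in the statement are no longer defined as written, and the weaker version has to be read as the form $\int_\Omega (-\Delta_N)^{\sigma/2}u\,(-\Delta_N)^{\sigma/2}v\,dx$ (or as a duality pairing).
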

    We also need the following inequality of Stroock and Varopoulos.
        \begin{lemma}\cite[Theorem 1]{LiskevichSemenov}
            Let $0<\sigma\le 2$ and $p> 1$. For nonnegative $u\in L^p(\Omega)$ with $(-\Delta_N)^{\frac{\sigma}{2}} u \in L^p(\Omega)$ we have
                \begin{align*}
                    \int_{\Omega} u^{p-1} (-\Delta_N)^{\frac{\sigma}{2}}u \ dx 
                    \ge \frac{4(p-1)}{p^2} \int_{\Omega} \left|(-\Delta_N)^{\frac{\sigma}{4}} u^{\frac{p}{2}}\right|^2 dx. 
                \end{align*}
            \label{Lemma_Strook&Varopulous_ineq}
        \end{lemma}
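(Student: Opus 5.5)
The plan is to derive the inequality from the sub-Markovian structure of the semigroup generated by $A:=(-\Delta_N)^{\frac{\sigma}{2}}$. The argument reduces the claim to a pointwise inequality between real numbers, using a kernel representation of $e^{-tA}$ and the limit $t\to0$. Since $0<\frac{\sigma}{2}\le 1$, the operator $A$ is obtained from the Neumann Laplacian by Bochner subordination:
\begin{align*}
e^{-tA}=\int_0^\infty e^{s\Delta_N}\,\eta_t(ds),
\end{align*}
with $\eta_t$ the one-sided $\frac{\sigma}{2}$-stable probability measures; for $\sigma=2$ we simply take $A=-\Delta_N$. The Neumann heat semigroup has a symmetric, nonnegative kernel $h_s(x,y)$ with $\int_\Omega h_s(x,y)\,dy=1$, because constants are preserved, $\lambda_0=0$. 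Hence $T_t:=e^{-tA}$ has a symmetric nonnegative kernel $k_t(x,y)$ with $\int_\Omega k_t(x,y)\,dy=1$. Moreover $T_t$ is contractive on every $L^q(\Omega)$ and strongly continuous there.

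\textbf{Step 1 (Dirichlet-form type identity).} For $w,z\in L^2(\Omega)$, symmetry of $k_t$ and conservation of mass give
\begin{align*}
\int_\Omega z\,(w-T_tw)\,dx=\frac12\int_\Omega\int_\Omega k_t(x,y)\,\big(z(x)-z(y)\big)\big(w(x)-w(y)\big)\,dx\,dy .
\end{align*}
We apply this with $w=u$, $z=u^{p-1}$, and then with $w=z=v:=u^{p/2}$.

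\textbf{Step 2 (elementary inequality).} For $a,b\ge0$ we show
\begin{align*}
(a^{p-1}-b^{p-1})(a-b)\ge \frac{4(p-1)}{p^2}\,(a^{p/2}-b^{p/2})^2 .
\end{align*}
Write $a^{p/2}-b^{p/2}=\frac p2\int_b^a s^{\frac p2-1}\,ds$ and apply Cauchy--Schwarz:
\begin{align*}
(a^{p/2}-b^{p/2})^2\le \frac{p^2}{4}\,(a-b)\int_b^a s^{p-2}\,ds=\frac{p^2}{4(p-1)}\,(a-b)(a^{p-1}-b^{p-1}).
\end{align*}
Since $k_t\ge0$, Step 1 then yields, for every $t>0$,
\begin{align*}
\frac1t\int_\Omega u^{p-1}(u-T_tu)\,dx\ \ge\ \frac{4(p-1)}{p^2}\,\frac1t\int_\Omega v\,(v-T_tv)\,dx .
\end{align*}

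\textbf{Step 3 (passage to the limit).} On the left, $u\in D(A)$ in $L^p$ by hypothesis, so $\frac1t(u-T_tu)\to Au$ in $L^p(\Omega)$. Since $u^{p-1}\in L^{p'}(\Omega)$, the left side converges to $\int_\Omega u^{p-1}Au\,dx$. On the right, $v\in L^2(\Omega)$, and by the spectral decomposition in the basis $\{e_n\}$,
\begin{align*}
\frac1t\int_\Omega v\,(v-T_tv)\,dx=\sum_n\frac{1-e^{-t\lambda_n^{\sigma/2}}}{t}\,|v_n|^2 .
\end{align*}
Each term increases to $\lambda_n^{\sigma/2}|v_n|^2$ as $t\downarrow0$. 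By monotone convergence, the right side tends to $\sum_n\lambda_n^{\sigma/2}|v_n|^2=\|(-\Delta_N)^{\frac\sigma4}v\|_{L^2}^2$. This limit is finite, and in particular the argument shows a posteriori that $v$ lies in the form domain.

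\textbf{Main obstacle.} The delicate point is the construction of $T_t$ as a genuinely Markovian, symmetric, conservative semigroup with a kernel. This is needed so that Step 1 holds without boundary terms. If a kernel representation is inconvenient, I would instead argue abstractly: $T_t$ is a symmetric Markov operator, so it is given by a symmetric sub-probability measure on $\Omega\times\Omega$ (Beurling--Deny). Step 1 remains valid with $k_t(x,y)\,dx\,dy$ replaced by that measure; because of conservation there is no killing term, and even a killing term would only add a nonnegative contribution. A secondary technical point is the integrability in Step 3 when $1<p<2$. If needed, I would first work with truncations and shifts $u_\varepsilon=\min(u,M)+\varepsilon$ and then pass to the limit by Fatou's lemma.
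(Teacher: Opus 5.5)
Your proof is correct. The paper does not prove this lemma; it cites Liskevich--Semenov, whose result is an abstract Stroock--Varopoulos inequality for symmetric Markov semigroups. The paper never checks that the semigroup generated by $(-\Delta_N)^{\sigma/2}$ fits that framework. It also does not say how to read the hypothesis $(-\Delta_N)^{\sigma/2}u\in L^p(\Omega)$ when $p\neq 2$, although the spectral definition in the preliminaries only acts on $L^2$.

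Your argument is essentially the standard proof behind the cited theorem: the elementary inequality $(a^{p-1}-b^{p-1})(a-b)\ge\frac{4(p-1)}{p^2}(a^{p/2}-b^{p/2})^2$, the symmetric double-integral representation of $\int_\Omega z\,(w-T_tw)\,dx$, and the limit $t\downarrow 0$. You precede it with the step the citation leaves implicit. Bochner subordination of the conservative Neumann heat semigroup shows that $e^{-t(-\Delta_N)^{\sigma/2}}$ is symmetric, positivity preserving and mass conserving. The citation is shorter and covers all symmetric Markov semigroups at once. Your route is self-contained and makes explicit that the hypothesis should be read as $u\in D(A_p)$, with $A_p$ the $L^p$-generator of $T_t$. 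It also gives $u^{p/2}\in H^{\sigma/2}(\Omega)$ as a by-product.

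One small repair. You state Step 1 for $w,z\in L^2(\Omega)$, but you apply it to $w=u\in L^p$ and $z=u^{p-1}\in L^{p'}$. For $p>2$ the function $u^{p-1}$ need not lie in $L^2$, and for $p<2$ the function $u$ need not. The identity holds unchanged for $w\in L^p$, $z\in L^{p'}$. Indeed, $L^p$-contractivity gives $\int_\Omega\int_\Omega k_t(x,y)\,|z(x)|\,|w(y)|\,dx\,dy=\int_\Omega |z|\,T_t|w|\,dx\le\|z\|_{L^{p'}}\|w\|_{L^p}$, so all four terms in the expansion are absolutely integrable. Consequently the truncation $\min(u,M)+\varepsilon$ that you keep in reserve for $1<p<2$ is not needed.
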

    
    \begin{lemma}\cite[Proposition 1.2]{Barbu1994}
        Let $X$ be a real Banach space, $A: D(A)\subseteq X \longrightarrow X$ be the infinitesimal generator of a $C_0-$semigroup of contraction $\mathcal{S}(t)$, $t>0$ on $X$ and $f: [0,T]\times X \longrightarrow X$ be a function measurable in $t$ and Lipschitz continuous in $x\in X$.
        \begin{enumerate}
            \item [(i)] (Local Existence) If $y_0\in X,$ then the problem \eqref{Eq: Abstract_FDE_control} admits a unique mild solution, i.e, a function $y\in C([0,T]; X)$ that satisfies
                \begin{align}
                    y(t) = \mathcal{S}(t)y_0 + \int_0^t \mathcal{S}(t-s) f(s,y(s))\ ds, \quad\forall t\in[0,T]
                    \label{eq:control_int_rep}
                \end{align}
            \item[(ii)] (Global Existence) If $X$ is a Hilbert space, $A$ is self-adjoint and dissipative on $X$, and $y_0\in D(A)$, then the mild solution is in fact a strong solution, i.e $y$ satisfies \eqref{eq:control_int_rep} and $y\in W^{1,2}([0,T];X)\cap L^2(0,T;D(A))$.
        \end{enumerate}
        \label{lemma : Local and Global Existence}
    \end{lemma}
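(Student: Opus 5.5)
The plan for (i) is a Banach fixed-point argument on $C([0,T];X)$. For $y\in C([0,T];X)$ set
\begin{align*}
    (\Phi y)(t) = \mathcal{S}(t)y_0 + \int_0^t \mathcal{S}(t-s) f(s,y(s))\, ds .
\end{align*}
First, $\Phi$ is well defined. Since $f$ is measurable in $t$ and Lipschitz (hence continuous) in $x$, it is a Carath\'eodory function, so $s\mapsto f(s,y(s))$ is strongly measurable. The bound $\|f(s,y(s))\|\le \|f(s,0)\| + L\|y(s)\|$ makes it Bochner integrable, provided $f(\cdot,0)\in L^1(0,T;X)$. I regard this as the implicit standing hypothesis. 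Strong continuity of $\mathcal{S}$ and dominated convergence then give $\Phi y\in C([0,T];X)$.

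To get a contraction on the whole interval at once, I will use the Bielecki norm $\|y\|_\lambda=\sup_{t\in[0,T]} e^{-\lambda t}\|y(t)\|$. This norm is equivalent to the sup norm. Since $\|\mathcal{S}(t)\|\le 1$,
\begin{align*}
    e^{-\lambda t}\|(\Phi y)(t)-(\Phi z)(t)\| \le L\int_0^t e^{-\lambda (t-s)}\, e^{-\lambda s}\|y(s)-z(s)\|\,ds \le \frac{L}{\lambda}\|y-z\|_\lambda .
\end{align*}
Taking $\lambda>L$ makes $\Phi$ a contraction. The unique fixed point is the unique mild solution \eqref{eq:control_int_rep}. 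Because the Lipschitz condition is global, no continuation argument is needed.

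For (ii), freeze the nonlinearity by setting $g(t)=f(t,y(t))$, where $y$ is the mild solution from (i). Then $g\in L^2(0,T;X)$, assuming $f(\cdot,0)\in L^2(0,T;X)$, and $y$ is the mild solution of the linear problem $y'=Ay+g$, $y(0)=y_0$. What remains is maximal $L^2$-regularity for a self-adjoint dissipative generator on a Hilbert space.

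Choose $g_n\in C^1([0,T];X)$ with $g_n\to g$ in $L^2(0,T;X)$. Since $y_0\in D(A)$, the corresponding mild solutions $y_n$ are classical. Take the inner product of $y_n'=Ay_n+g_n$ with $y_n'$. Self-adjointness gives $\frac{d}{dt}\langle -Ay_n,y_n\rangle = -2\langle Ay_n,y_n'\rangle$. Hence
\begin{align*}
    \|y_n'\|^2 + \frac12\frac{d}{dt}\langle -Ay_n,y_n\rangle = \langle g_n,y_n'\rangle .
\end{align*}
Integrate, use Young's inequality, and use dissipativity, i.e.\ $\langle -Ay,y\rangle\ge 0$. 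This yields
\begin{align*}
    \int_0^T\|y_n'\|^2\,dt \le \langle -Ay_0,y_0\rangle + \int_0^T\|g_n\|^2\,dt .
\end{align*}
Consequently $Ay_n=y_n'-g_n$ is bounded in $L^2(0,T;X)$ as well.

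Finally, pass to the limit. The mild formula and contractivity give $y_n\to y$ in $C([0,T];X)$. Weak compactness in $L^2(0,T;X)$ gives $y_n'\rightharpoonup y'$ and $Ay_n\rightharpoonup w$. Since $A$ is closed, its graph is weakly closed, so $w=Ay$ and $y\in L^2(0,T;D(A))$. Also $y\in W^{1,2}([0,T];X)$, which is the claimed strong solution.

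I expect the main obstacle to be the rigorous justification of this energy identity. Differentiating $\langle -Ay_n,y_n\rangle$ requires enough regularity of $y_n$, and this is why I approximate by $C^1$ data $g_n$ rather than work with $g$ directly. If that regularity is delicate, the alternative is to replace $A$ by its Yosida approximation $A_\mu$, which is bounded, self-adjoint and dissipative. The same estimate then holds uniformly in $\mu$, with the bound $\langle -A_\mu y_0,y_0\rangle\le\langle -Ay_0,y_0\rangle$, and one lets $\mu\to\infty$.
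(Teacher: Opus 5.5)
Your proof is correct. The paper contains no proof of this lemma; it is quoted from Barbu. So your argument is a self-contained substitute for the citation, and it follows the standard lines. For (i) it is a one-shot Banach contraction in the Bielecki norm. For (ii) it freezes $g(t)=f(t,y(t))$ and proves $L^2$ maximal regularity of the linear problem through the energy identity.

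What your route buys over the citation is that it exposes the hypotheses actually needed, which the quoted statement omits. The Lipschitz constant must be independent of $t$ (an $L^1$ function of $t$ would also do). Also, $f(\cdot,0)$ must lie in $L^1(0,T;X)$ for (i) and in $L^2(0,T;X)$ for (ii). These conditions are not cosmetic:
\begin{itemize}
\item take $A=0$, which is self-adjoint, dissipative, and generates $\mathcal{S}(t)=I$, and $f(t,x)=t^{-1/2}e$ for a unit vector $e$; the mild solution $y(t)=y_0+2t^{1/2}e$ exists, but $y'\notin L^2(0,T;X)$;
\item for $f(t,x)=t^{-1}e$ the Duhamel integral does not exist at all.
\end{itemize}
Likewise, the global Lipschitz bound that lets you skip a continuation argument is exactly what is not verified when the paper later applies the lemma on $X=L^2(\Omega)^2$. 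There $F$ is only locally Lipschitz, and only with respect to the sup norm. Your energy bound also depends on $y_0$ only through $\|(-A)^{1/2}y_0\|^2$, so after approximating $y_0$ it covers $y_0\in D((-A)^{1/2})$.

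The step you flagged as the main obstacle is in fact harmless, and the Yosida fallback is unnecessary. Since $y_n\in C^1([0,T];X)\cap C([0,T];D(A))$, symmetry of $A$ gives
\[
\langle -Ay_n(t+h),y_n(t+h)\rangle-\langle -Ay_n(t),y_n(t)\rangle
=\langle y_n(t+h)-y_n(t),-Ay_n(t+h)\rangle+\langle -Ay_n(t),y_n(t+h)-y_n(t)\rangle .
\]
Divide by $h$, and use that the difference quotients of $y_n$ converge in $X$ and that $Ay_n$ is continuous in $X$. This gives the continuous derivative $-2\langle Ay_n,y_n'\rangle$ directly.

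In the limit passage, two routine identifications remain. First, identify the weak limit of $y_n'$ with $y'$, using $y_n(t)=y_0+\int_0^t y_n'\,ds$ and the weak continuity of $u\mapsto\int_0^t u\,ds$ into $X$. Second, note that the weak closedness you invoke is that of the lifted operator $u\mapsto Au(\cdot)$ on $L^2(0,T;X)$. That operator is closed because an $L^2$-convergent sequence has an a.e.\ convergent subsequence and $A$ is closed.
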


    \section{{Existence of strong positive global solution}}
    \label{sec:con_existence}
    Let $y=(y_1,y_2)=(S,I)$, $y(0)=y_0=(y_{10},y_{20})=(S_0,I_0)$, $D_A(\Omega)=L^2(\Omega)\times L^2(\Omega)$. Let $A$ be the linear operator defined by 
            \begin{align*}
                A: D(A)\subset D_A(\Omega) &\longrightarrow D_A(\Omega)\\
                y & \longmapsto 
                Ay = \left(-d_1(-\Delta_N)^{\sigma} y_1,-d_2(-\Delta_N)^{\sigma} y_2 \right),
            \end{align*}
        with domain of $A$ is given by 
        $D(A) := D\big((-\Delta_N)^\sigma\big) \times D\big((-\Delta_N)^\sigma\big).$
        
        Consider $F(t,x,y(t))=\left(F_1(t,x,y(t)),F_2(t,x,y(t))\right)$ where
            \begin{align}
                \begin{cases}
                    F_1(t,x,y) &= -y_1y_2\psi(y_2) -a_1 y_1 + b - \chi_{\omega}u_1 y_1, \\
                    F_2(t,x,y) &= y_1y_2\psi(y_2) -a_2 y_2 - u_2 y_2,
                \end{cases}
                \label{eq : F1 F2 RHS}
            \end{align}
        with $y_{10}+y_{20}\le N_0 :=\ \sup_{x\in\Omega} N(0,x) \ <\ \infty,$for $x \in \Omega,$ a.e. Then the problem \eqref{Eq : FDE_control_system1}-\eqref{Eq: IC_system} can be rewritten in the space $D_A(\Omega)$ in the form
            \begin{align}
                \begin{cases}
                    \frac{dy}{dt} = Ay + F(t,y(t)), \qquad t\in [0,T],\\
                     y(0) = y_{0}.
                \end{cases}
            \label{Eq: Abstract_FDE_control}
            \end{align}
        
        Furthermore, let 
        $W^{1,2}\left([0,T],D_A(\Omega)\right)=
        \left\{y\in AC([0,T],D_A(\Omega)): \frac{\partial y}{\partial t}\in L^2(0,T; D_A(\Omega))\right\}$ 
        and 
        
        \noindent 
        $N(T,\Omega) := L^2\big(0,T; D((-\Delta_N)^\sigma)\big) \cap L^\infty\big([0,T]; H^\sigma(\Omega)\big).$

        For $T>0$, consider the Banach space $K_T=C\left([0,T],C(\bar{\Omega})\times C(\bar{\Omega}) \right)$ equipped with the norm
            \begin{align*}
                \|y\|_{K_T} := \sup_{t\in[0,T]}\Big(\|y_1(t)\|_{L^\infty(\Omega)}+\|y_2(t)\|_{L^\infty(\Omega)}\Big),
                \qquad\text{for any } y=(y_1,y_2)\in K_T.
            \end{align*}
        First, we prove the following lemma.

        \begin{lemma}
            The function $F(t,y)=(F_1(t,y_1,y_2), F_2(t,y_1,y_2))$ is Lipschitz continuous on $C\left([0,T],C(\bar{\Omega})\times C(\bar{\Omega}) \right)$ with respect to y.
        \label{lemma F Lipschitz}
        \end{lemma}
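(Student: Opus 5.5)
Since $\psi\in C^1(\mathbb{R}^+)$ is only locally Lipschitz, the product $y_1y_2\psi(y_2)$ is not globally Lipschitz. I will therefore prove the statement in the form used for Lemma~\ref{lemma : Local and Global Existence}: Lipschitz continuity on bounded sets of $K_T$. The global version then follows by the usual truncation, i.e. replacing $F$ by $F(\pi_R(y))$, where $\pi_R$ projects onto the ball of radius $R$ (a $1$-Lipschitz map). This truncation is harmless once the a priori $L^\infty$ bounds (positivity and $S+I\le N$) are established. So I fix $R>0$ and take $y=(y_1,y_2)$, $z=(z_1,z_2)\in K_T$ with $\|y\|_{K_T},\|z\|_{K_T}\le R$. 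The goal is a constant $L_R$, depending only on $R$, $\|a_i\|_{L^\infty}$, $U_1$, $U_2$ and $\psi$, such that
\begin{align*}
\|F(t,y)-F(t,z)\|_{K_T}\le L_R\,\|y-z\|_{K_T}.
\end{align*}

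First, the linear parts. Pointwise in $(t,x)$ we have $|a_1y_1-a_1z_1|\le\|a_1\|_{L^\infty(\Omega)}|y_1-z_1|$. Similarly $|\chi_\omega u_1(y_1-z_1)|\le U_1|y_1-z_1|$ and $|(a_2+u_2)(y_2-z_2)|\le(\|a_2\|_{L^\infty}+U_2)|y_2-z_2|$. The constant $b$ cancels in the difference. Taking the supremum in $x$ and then in $t$ gives bounds by $\|y-z\|_{K_T}$.

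Second, the nonlinear term $g(y_1,y_2)=y_1y_2\psi(y_2)$, which enters $F_1$ and $F_2$ with opposite signs, so it suffices to treat it once. I split the difference as
\begin{align*}
g(y)-g(z)=(y_1-z_1)\,y_2\psi(y_2)+z_1\bigl(y_2\psi(y_2)-z_2\psi(z_2)\bigr).
\end{align*}
The map $h(s)=s\psi(s)$ is $C^1$ on $[0,R]$, so the mean value theorem gives
\begin{align*}
|h(y_2)-h(z_2)|\le M_R\,|y_2-z_2|,\qquad M_R:=\sup_{[0,R]}\bigl(|\psi|+s|\psi'|\bigr).
\end{align*}
Therefore
\begin{align*}
|g(y)-g(z)|\le R\sup_{[0,R]}|\psi|\,|y_1-z_1|+R\,M_R\,|y_2-z_2|.
\end{align*}
Summing the bounds for $F_1$ and $F_2$ and taking suprema yields $L_R$ explicitly.

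The main obstacle is domain and regularity bookkeeping rather than estimation. First, $\psi$ is defined only on $\mathbb{R}^+$, so I either work on the nonnegative cone of $K_T$ or extend $\psi$ to a $C^1$ function on $\mathbb{R}$, e.g. evenly or by $\psi(0)+\psi'(0)s$ for $s<0$; this must be consistent with the positivity proved later. Second, $F$ must be measurable in $t$, and since $a_i,b,u_i$ are only $L^\infty$, $F(t,y)$ lands in $L^\infty$ rather than $C(\bar\Omega)$. I would therefore state the estimate in the $L^\infty(\Omega)$-norm, which is exactly the norm defining $\|\cdot\|_{K_T}$, and note that Lipschitz continuity in that norm is what the fixed-point argument behind Lemma~\ref{lemma : Local and Global Existence} requires.
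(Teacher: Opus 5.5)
Your proof is correct and is essentially the paper's argument: you bound the linear terms via $\|a_i\|_{L^\infty}$ and $U_i$, and handle $y_1y_2\psi(y_2)$ by a telescoping splitting that uses a sup-bound on $y$ together with the local Lipschitz property of $\psi$, which you package as the mean value theorem for $s\mapsto s\psi(s)$. Your write-up is the more careful of the two: the paper's $M$ silently depends on $y,\tilde y$, so it too only proves Lipschitz continuity on bounded sets, which you state openly; the paper's splitting is also off by the cross term $(y_1-\tilde y_1)(y_2-\tilde y_2)\psi(\tilde y_2)$, whereas yours is an exact identity; and your one small slip is the truncation remark, since radial projection onto a ball is not $1$-Lipschitz in a sup-type norm (its best constant there is $2$), so use componentwise clipping to $[0,R]$, which is $1$-Lipschitz in the $K_T$ norm and also settles the domain issue for $\psi$.
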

        \begin{proof}   
            We show that there exist $L>0$ such that 
                \begin{align*}
                    \|F(t,y)-F(t,\tilde{y})\|_{\infty} < L\ \|y-\tilde{y}\|_{\infty}
                \end{align*}
                
            Notice that, for any $y,\tilde{y}\in K_T$, there exist $M>0$ such that 
                \begin{align*}
                    \|y\|_\infty < M,
                    \quad |y_i|<M,
                    \quad |\psi(y_2)|<M,
                    \qquad\text{for i = 1,2.}
                \end{align*}
            Hence,
                \begin{align*}
                    |F_1(t,y)-F_1(t,\tilde{y})|
                    \le|y_1y_2\psi(y_2)-\tilde{y}_1\tilde{y}_2\psi(\tilde{y}_2)|
                    + |a_1(x)| |y_1-\tilde{y}_1|
                    + |\chi_{\omega} u_1|\ |y_1-\tilde{y}_1|.
                \end{align*}
            We notice that $\psi(y_2(t,x))$ is Lipschitz continuous on $C([0,T], C(\Omega))$, so there exist Lipschitz constant $L'$ such that 
                \begin{align*}
                    |y_1y_2\psi(y_2)-\tilde{y}_1\tilde{y}_2\psi(\tilde{y}_2)|
                    &\le
                    |y_1-\tilde{y}_1|\ |y_2\psi(\tilde{y}_2)| 
                    + |y_1| |(y_2-\tilde{y}_2)\psi(\tilde{y}_2)
                     +y_2(\psi(y_2)-\psi(\tilde{y}_2))|
                    \\
                    &\le
                    M^2 |y_1-\tilde{y}_1|\ 
                    + M^2 |y_2-\tilde{y}_2|\
                    + M^2 \ C\ |y_2-\tilde{y}_2|
                    \\
                    &\le L'\left(|y_1-\tilde{y}_1|+|y_2-\tilde{y}_2|\right),
                    \intertext{so}
                    |F_1(t,y)-F_1(t,\tilde{y})|
                    &\le
                      L'\left(|y_1-\tilde{y}_1|+|y_2-\tilde{y}_2|\right)
                    + |a_1(x)| |y_1-\tilde{y}_1|
                    + |\chi_{\omega} u_1|\ |y_1-\tilde{y}_1|\\
                    &\le
                    L_1\left(|y_1-\tilde{y}_1|+|y_2-\tilde{y}_2|\right),
                    \intertext{ with 
                    $L_1 = \sup_{x\in\Omega}\{L' + |a_1(x)| + U_1\},$.
                    Similarly,}
                     |F_2(t,y)-F_2(t,\tilde{y})| 
                     &\le L_2\left(|y_1-\tilde{y}_1|+|y_2-\tilde{y}_2|\right),
                     \intertext{where 
                     $L_2 = \sup_{x\in\Omega}\{L' + |a_2(x)| + U_2\}$. Hence,}
                     \|F(t,y)-F(t,\tilde{y})\|_{\infty} 
                     &\le \|F_1(t,y)-F_1(t,\tilde{y})\|_\infty + \|F_2(t,y)-F_2(t,\tilde{y})\|_\infty
                     \\
                     &\le L_1 \|y-\tilde{y}\|_\infty + L_2 \|y-\tilde{y}\|_\infty
                     \\ 
                     &=L_3 \|y-\tilde{y}\|_\infty , \qquad\qquad \text{where } L_3:=L_1+L_2 .
                \end{align*}
                
        \end{proof}

    
    Notice that $D(A)$ is dense in $D_A(\Omega)$, and $A$ is self-adjoint and dissipative on $D(A)$; indeed, for all $y=(y_1,y_2)\in D(A)$,
        \[
        \langle Ay, y\rangle_{D_A(\Omega)}
        = -d_1 \big\|(-\Delta_N)^{\sigma/2} y_1\big\|^2_{L^2(\Omega)}
          -d_2 \big\|(-\Delta_N)^{\sigma/2} y_2\big\|^2_{L^2(\Omega)}
        \le 0.
        \]
    Hence, by the Lumer--Phillips theorem, $A$ is the infinitesimal generator of a $C_0$-semigroup of contractions $\mathcal{S}(t)$, $t>0$, on $D_A(\Omega)$. 
 
    \begin{theorem}(Local Existence)
        Let $ y_{10}, y_{20}\in C(\Omega)$ with $ y_{10}, y_{20}\ge 0$. Then, there exist  $T>0$ and a unique mild solution $y\in K_T$ that satisfies
            \begin{align}
                y(t) = \mathcal{S}(t)y_0 + \int_0^t \mathcal{S}(t-s) f(s,y(s))\ ds, \quad\forall t\in[0,T],
            \end{align}
        where $\mathcal{S}(t)$ is the $C_0-$semigroup of contraction generated by $A$. 
    \end{theorem}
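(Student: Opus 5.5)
The plan is a Banach fixed point argument for the Duhamel map in $K_T$, or equivalently an application of Lemma \ref{lemma : Local and Global Existence}(i). Since $F$ is only locally Lipschitz (because of the term $y_1y_2\psi(y_2)$), I first truncate. Fix $R>\|y_0\|_{K_T}+1$ and work on the closed ball
\[
B_R=\{y\in K_T:\ \|y\|_{K_T}\le R\}.
\]
On $B_R$, Lemma \ref{lemma F Lipschitz} gives a Lipschitz constant $L_3=L_3(R)$, with $M=R$ and $\sup_{[0,R]}(|\psi|+|\psi'|)$ controlling the constant through $\psi\in C^1$. Define
\[
\Phi(y)(t)=\mathcal{S}(t)y_0+\int_0^t\mathcal{S}(t-s)F(s,y(s))\,ds .
\]

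The key structural input is that $\mathcal{S}(t)$ acts well on $C(\bar\Omega)$ and contracts the sup norm. Because $(-\Delta_N)^\sigma$ is a subordinated Neumann Laplacian (Bochner subordination), $\mathcal{S}(t)$ is diagonal with components $e^{-td_i(-\Delta_N)^\sigma}$, which are positive, Markovian and mass preserving. Hence $\|\mathcal{S}(t)v\|_{L^\infty}\le\|v\|_{L^\infty}$. One can also see this from Lemma \ref{Lemma_Strook&Varopulous_ineq}: testing with $|v|^{p-2}v$ gives $L^p$ contractivity uniformly in $p$, and letting $p\to\infty$ gives the sup norm bound. Strong continuity on $C(\bar\Omega)$ follows from the Feller property of the subordinated Neumann heat semigroup, provided $\partial\Omega$ is smooth. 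Strictly, the statement should read $C(\bar\Omega)$ rather than $C(\Omega)$, so that the data are bounded. With this, $\Phi$ maps $K_T$ into $K_T$.

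The estimates are then routine. For $y\in B_R$,
\[
\|\Phi(y)\|_{K_T}\le\|y_0\|_\infty+T\bigl(L_3R+\|F(\cdot,0)\|_\infty\bigr),
\]
where $\|F(\cdot,0)\|_\infty\le\|b\|_\infty$. We also have $\|\Phi(y)-\Phi(\tilde y)\|_{K_T}\le TL_3\|y-\tilde y\|_{K_T}$. Choosing $T$ small enough that the first bound is at most $R$ and $TL_3<1$ makes $\Phi$ a contraction of $B_R$ into itself. Banach's fixed point theorem then gives a mild solution in $B_R$.

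Uniqueness in all of $K_T$ follows from Gronwall. Any two mild solutions are bounded by some $R'$ on $[0,T]$, so the difference $w$ satisfies $\|w(t)\|_\infty\le L_3(R')\int_0^t\|w(s)\|_\infty\,ds$, which forces $w=0$.

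The main obstacle I expect is the semigroup step: justifying that the $L^2$-generated semigroup restricts to a $C_0$, $L^\infty$-contractive semigroup on $C(\bar\Omega)$. I would handle it through the subordination formula $e^{-t(-\Delta_N)^\sigma}=\int_0^\infty \eta_t^\sigma(s)e^{s\Delta_N}\,ds$, with $\eta_t^\sigma\ge0$ a probability density, together with the known properties of the Neumann heat semigroup. If that becomes too delicate, a fallback is to run the fixed point in $C([0,T];L^\infty(\Omega)^2)$ and recover continuity in $x$ afterwards. Nonnegativity of $y_0$ is not needed for this local result; it will matter later for positivity.
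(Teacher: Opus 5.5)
Your argument is sound in outline, but it takes a different route from the paper. The paper gives no separate proof. It presents the theorem as a direct consequence of Lemma~\ref{lemma : Local and Global Existence}(i) (Barbu's Proposition~1.2), using Lemma~\ref{lemma F Lipschitz} and the Lumer--Phillips remark that $A$ generates a contraction semigroup on $D_A(\Omega)=L^2(\Omega)\times L^2(\Omega)$. That citation is short, but its two inputs live on different spaces. Barbu's result needs $F$ globally Lipschitz on the same space $X$ on which $\mathcal{S}(t)$ is a contraction semigroup. In Lemma~\ref{lemma F Lipschitz}, however, the constant depends on a bound $M$ for $y,\tilde y$, so it is only local, and it is measured in the sup norm. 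Contractivity, meanwhile, is only checked in $L^2$, where the quadratic term is not Lipschitz at all.

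Your proof repairs exactly these mismatches. Restricting $\Phi$ to $B_R$ makes the local constant legitimate and gives an honest local time $T=T(\|y_0\|_\infty)$. Subordination supplies what the citation tacitly assumes: that $\mathcal{S}(t)$ is sup-norm contractive and strongly continuous on $C(\bar\Omega)$. The cost is importing Feller theory for the Neumann heat semigroup. The gain is an argument that actually closes in $K_T$, with uniqueness in all of $K_T$ by Gronwall. Your correction from $C(\Omega)$ to $C(\bar\Omega)$ is also right.

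Two points need tightening.

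\textbf{Continuity of the Duhamel term.} The forcing is not continuous: $a_i,b\in L^\infty(\Omega)$, $\chi_\omega$ is discontinuous, and $u_i$ is only in $L^\infty$. So $F(s,y(s))\notin C(\bar\Omega)$, and $s\mapsto F(s,y(s))$ is not even strongly measurable in $L^\infty(\Omega)$. Strong continuity on $C(\bar\Omega)$ plus sup-contractivity therefore does not by itself give $\Phi(K_T)\subset K_T$. You also need the smoothing property $\mathcal{S}(\tau)L^\infty(\Omega)\subset C(\bar\Omega)$ for $\tau>0$. Subordination provides it, since the Neumann heat kernel is continuous on $\bar\Omega\times\bar\Omega$ for $s>0$ and $\eta^\sigma_\tau$ is a probability density. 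Then read the Duhamel term as an $L^2(\Omega)$-valued Bochner integral and split $\int_0^t\mathcal{S}(t-s)F\,ds=\mathcal{S}(\delta)\int_0^{t-\delta}\mathcal{S}(t-\delta-s)F\,ds+\int_{t-\delta}^t\mathcal{S}(t-s)F\,ds$. The last piece has sup norm at most $\delta\sup_s\|F(s,y(s))\|_\infty$, which gives continuity in $x$. The same splitting, combined with continuity of $\tau\mapsto\mathcal{S}(\tau)$ in operator norm on $L^\infty(\Omega)$ for $\tau>0$, gives continuity in $t$. Your fallback through $L^\infty$ needs exactly this ingredient to get back into $K_T$.

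\textbf{Domain of $\psi$.} $\psi$ is only given on $\mathbb{R}^+$, while $B_R$ contains functions with negative values on which $\Phi$ must be defined. Extend $\psi$ to a $C^1$ function on $\mathbb{R}$ and take the Lipschitz constant over $[-R,R]$ rather than $[0,R]$. With $y_0\ge 0$, the later positivity argument shows the extension does not affect the solution.
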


    Next we show the positivity and boundedness of the solution.

    \begin{lemma}     
        Let $( y_{10},  y_{20})\in C(\Omega)\times C(\Omega)$ be such that $ y_{i0}\ge 0$ for $i=1,2$. Then $y_i(t,x)\ge 0$ and $y_i\in\ L^\infty(Q_T)$.
    \end{lemma}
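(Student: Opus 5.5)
We prove positivity first, using a Stampacchia-type truncation together with the Stroock--Varopoulos inequality of Lemma \ref{Lemma_Strook&Varopulous_ineq} (or, more simply, the fact that the spectral Neumann fractional Laplacian satisfies $\int_\Omega y^- (-\Delta_N)^\sigma y\,dx \le 0$, i.e.\ the semigroup $\mathcal{S}(t)$ is positivity preserving). Then we obtain the $L^\infty$ bound from the structure of the reaction terms: the sum $S+I$ satisfies a linear inequality, and the infection term cancels.

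\textbf{Positivity.} Write $y_1=y_1^+-y_1^-$ with $y_1^-=\max(-y_1,0)$, multiply the first equation of \eqref{Eq: Abstract_FDE_control} by $-y_1^-$, and integrate over $\Omega$. The diffusion contribution is $d_1\int_\Omega y_1^-(-\Delta_N)^\sigma y_1\,dx$. By the Markov (sub-Markovian) property of the semigroup generated by $-(-\Delta_N)^\sigma$ (the Beurling--Deny criterion underlying Lemma \ref{Lemma_Strook&Varopulous_ineq}), this term is $\le 0$ after the sign bookkeeping, so it can be discarded. The reaction terms are handled as follows:
\begin{itemize}
\item $-y_1y_2\psi(y_2)$, $-a_1y_1$ and $-\chi_\omega u_1 y_1$ are all of the form $c(t,x)\,y_1$ with $c$ bounded on $[0,T]$, since $y\in K_T$ and $\psi\in C^1$.
\item $b\ge 0$ contributes with a favourable sign.
\end{itemize}
This gives
\[
\frac12\frac{d}{dt}\|y_1^-\|_{L^2}^2\le C\|y_1^-\|_{L^2}^2 .
\]
Since $y_1^-(0)=0$, Gronwall's inequality gives $y_1^-\equiv 0$. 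The same argument applies to $y_2$, because $F_2$ has the form $y_2\big(y_1\psi(y_2)-a_2-u_2\big)$. This is quasi-positivity: $F_i(y)\ge 0$ whenever $y_i=0$ and $y_j\ge0$.

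\textbf{Boundedness.} Set $N=y_1+y_2$. Adding the equations, the term $y_1y_2\psi(y_2)$ cancels, and positivity together with $a_i\ge \underline a:=\min(\operatorname{ess\,inf}a_1,\operatorname{ess\,inf}a_2)\ge0$ gives
\[
\partial_t N \le -d_1(-\Delta_N)^\sigma y_1-d_2(-\Delta_N)^\sigma y_2+\|b\|_\infty .
\]
If $d_1=d_2$, this is a scalar inequality for $N$. Comparison via the positivity-preserving property of $\mathcal{S}(t)$, applied to $N-(N_0+t\|b\|_\infty)$ (constants lie in the kernel of $(-\Delta_N)^\sigma$), gives
\[
0\le y_i\le N_0+T\|b\|_\infty .
\]

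When $d_1\ne d_2$ we argue equation by equation instead. For $y_1$ we get $\partial_t y_1\le -d_1(-\Delta_N)^\sigma y_1+\|b\|_\infty$, so the same comparison yields $y_1\le \|y_{10}\|_\infty+T\|b\|_\infty=:M_1$. Then $y_2$ satisfies
\[
\partial_t y_2\le -d_2(-\Delta_N)^\sigma y_2+M_1\sup_{[0,M]}|\psi|\,y_2 .
\]
This step is the main obstacle: the bound on $\psi$ needs an a priori bound on $y_2$. We handle it with an $L^p$ estimate. Multiply by $y_2^{p-1}$ and apply Lemma \ref{Lemma_Strook&Varopulous_ineq} to drop the diffusion term. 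This gives $\frac{d}{dt}\|y_2\|_p\le C\|y_2\|_p$ with $C$ independent of $p$, provided $\psi$ is bounded on the range, which can be guaranteed on the existence interval or by assuming $\psi$ bounded, as the model intends. Letting $p\to\infty$ gives
\[
\|y_2(t)\|_\infty\le e^{CT}\|y_{20}\|_\infty .
\]
Hence $y_i\in L^\infty(Q_T)$.
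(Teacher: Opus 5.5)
Your positivity step and your bound on $y_1$ are essentially the paper's: Gronwall on $\|y_1^-\|_{L^2(\Omega)}^2$ for positivity, and for $y_1$ a comparison argument in place of the paper's $L^p$/Stroock--Varopoulos estimate with $p\to\infty$, which is equivalent. Your Beurling--Deny treatment of the diffusion term is more accurate than the paper's. The paper writes that term as $-d_1\|(-\Delta_N)^{\sigma/2}y_1^-\|_{L^2(\Omega)}^2$, which drops the cross term $\int_\Omega y_1^-(-\Delta_N)^\sigma y_1^+\,dx$; that term is nonzero for a nonlocal operator, though of favourable sign. Both arguments silently use $\psi\ge 0$ to discard $-y_1y_2\psi(y_2)$ in the $y_1$ bound.

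The genuine difference is the bound on $y_2$. The paper adds the equations and runs Pierre's duality argument. It tests the summed inequality against the solution of a dual fractional heat problem and invokes maximal $L^q$-regularity. This uses only the cancellation of the infection term and $y_1\in L^\infty(Q_T)$, so it needs no size condition on $\psi$.

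What it actually delivers, though, is $\|y_2\|_{L^p(Q_T)}\le C_p$, where $C_p$ comes from maximal regularity in $L^{p/(p-1)}$ and degenerates as $p\to\infty$. So the paper's final ``take $p\to\infty$'' is not justified as written. Reaching $L^\infty$ would need a further bootstrap, e.g.\ $L^p$--$L^\infty$ smoothing of the fractional heat semigroup applied to $\partial_t y_2+d_2(-\Delta_N)^\sigma y_2\le y_1y_2\psi(y_2)$, and that step itself needs a growth condition on $\psi$.

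Your route is more elementary and gives an explicit $L^\infty$ bound that is uniform over $U_{ad}$. When $d_1=d_2$ it needs no condition on $\psi$. When $d_1\ne d_2$ it goes through a linear estimate with a $p$-independent constant, but there it genuinely requires $\sup_{\mathbb{R}^+}\psi<\infty$. That is stronger than the stated hypothesis $\psi\in C^1(\mathbb{R}^+)$, although it holds for $\psi=\beta$ in the simulations and for $\psi=\beta/\phi(I)$ with $\phi\ge 1$. You should state it as an explicit hypothesis.

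Drop the ``existence interval'' alternative. On $[0,T]$ with $y\in K_T$, the conclusion $y_i\in L^\infty(Q_T)$ is already automatic. A constant depending on $\sup_{Q_T}y_2$ does not rule out blow-up, which a global existence argument needs. Nor is it uniform in the control, which the minimizing-sequence argument for the existence of an optimal control needs.
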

    \begin{proof}
        Let $y_i= y_i^+ + y_i^-,$ with $y_i^+ = \sup(y_i,0)$ and $y_i^- = \inf(y_i,0)$ for $i=1,2$.
        Multiplying the first equation of \eqref{Eq: Abstract_FDE_control} by $y_1^-$ and integrating with respect to $x$ over $\Omega$, we get 
            \begin{align*}
                \int_{\Omega} y_1^- \frac{\partial y_1^-}{\partial t} \ dx
                &\le -d_1 \int_{\Omega} y_1^-(-\Delta_N)^\sigma y_1^- \ dx
                  -\int_{\Omega} (y_1^-)^2 y_2\psi(y_2) \ dx
                  -\int_{\Omega}a_1(x) (y_1^-)^2 \ dx
                  \\
                  & \quad
                  +\int_{\Omega}b(x) y_1^-  \ dx
                  -\int_{\Omega}\chi_\omega u_1 (y_1^-)^2 \ dx.
                \intertext{Using integration by parts, we obtain,}
                \frac{1}{2} \frac{\partial }{\partial t}\int_{\Omega} (y_1^- )^2 \ dx
                &= -d_1 \int_{\Omega} |(-\Delta_N)^{\sigma/2} (y_1^-)|^2\ dx
                  -\int_{\Omega} (y_1^-)^2 y_2\psi(y_2) \ dx
                  -\int_{\Omega}a_1(x) (y_1^-)^2 \ dx
                  \\
                  & \quad
                  +\int_{\Omega}b(x) y_1^-  \ dx
                  -\int_{\Omega}\chi_\omega u_1 (y_1^-)^2 \ dx\\
                &\le -\int_{\Omega}(y_1^-)^2y_2\psi(y_2) \ dx.
            \end{align*}
        From the local existence, $y_2$ is bounded on $[0,T]$ for $T<T_{\max}$; therefore, there exist a continuous function $C_2(t)$ (depending on $t$), such that $\|y_2(t)\|_{L^\infty(\Omega)}\le C_2(t)$ and $\|\psi(y_2(t))\|_{L^\infty(\Omega)}\le MC_2(t)$. Hence, we have
            \begin{align*}
                \frac{\partial }{\partial t}\int_{\Omega} (y_1^- )^2 \ dx
                &\le 
                   M C_2(t)^2\int_{\Omega} (y_1^-)^2 \ dx.
            \end{align*}
        Let $\Phi_1(t) = \int_{\Omega}(y_1^-)^2 \ dx$. Since $ y_{10}= y_{10}^+$ then $ y_{10}^-=0$ so that 
        $\Phi_1(0) 
        =0$. Then for $C(t)=M(C_2(t))^2$ we have
            \begin{align*}
                \begin{cases}
                    \frac{\partial }{\partial t} \Phi_1 (t) \le C(t) \Phi_1(t), 
                    \\
                    \Phi_1(0)=0.
                \end{cases}
            \end{align*}
        By Gronwall's lemma we deduce that 
            \begin{align*}
                \Phi_1(0) = 0 
                \quad\Longrightarrow\quad  
                \|y_1^-\|_{L^2(\Omega)}=0
                \quad\Longrightarrow\quad 
                y_1^- = 0 \ a.e
                \quad\Longrightarrow\quad 
                y_1 = y_1^+\ge 0.
            \end{align*}
        Next, by multiplying the second equation of \eqref{Eq: Abstract_FDE_control} by $y_2^-$ and integrating over $\Omega$,
        using similar reasoning as before, we deduce that $y_2 \ge 0$.\\

         \noindent\textbf{Boundedness}. 
        To prove the boundedness of $y_1$, we take the scalar product of the first equation of \eqref{Eq: Abstract_FDE_control} by $y_1^{p-1}$, with $p>1,$ to get 
            \begin{align*}
                \int_{\Omega} y_1^{p-1} \frac{\partial y_1}{\partial t} \ dx
                &= -d_1 \int_{\Omega} y_1^{p-1}(-\Delta_N)^\sigma y_1 \ dx
                  -\int_{\Omega}\left( y_1^{p}y_2\psi(y_2) 
                  +a_1(x) y_1^{p} \right) \ dx
                  \\
                  &\quad
                  +\int_{\Omega}b(x) y_1^{p-1}\ dx 
                  -\int_{\Omega}\chi_\omega u_1 y_1^p dx.
                  \\
                \frac{1}{p} \frac{\partial }{\partial t}\int_{\Omega} y_1^p \ dx
                &\le -d_1 \int_{\Omega} y_1^{p-1}(-\Delta_N)^\sigma y_1 \ dx
                  +\int_{\Omega}b(x) y_1^{p-1}\ dx   
                  \le
                \end{align*}
        Using Stroock and Varopoulos inequality \eqref{Lemma_Strook&Varopulous_ineq},  we obtain
            \begin{align*}
                \frac{1}{p} \frac{\partial }{\partial t}\|y_1\|_{L^p(\Omega)}^p \ dx
                &\le -\frac{4d_1(p-1)}{p} \int_{\Omega} \left|(-\Delta_N)^{\sigma/2} (y_1^{p/2})\right|^2\ dx
                  +b\int_{\Omega} y_1^{p-1}\ dx .
                \intertext{We notice that, by choosing $1/r+1/s=1$ with $s=p$ and $r=p/(p-1)$ then}
                \int_{\Omega} y_1^{p-1}
                &\le \left(\int_\Omega y^{(p-1)r} \ dx\right)^{1/r} \left(\int_\Omega 1^s \ dx\right)^{1/s} 
                = \left(\int_\Omega y^{p} \ dx\right)^{1-1/p} |\Omega|^{1/p}
                =\|y_1\|_{L^p(\Omega)}^{p-1}|\Omega|^{1/p}
            \end{align*}
         Hence,
            \begin{align*}
                \frac{1}{p}p \|y_1\|_{L^p(\Omega)}^{p-1} \frac{\partial}{\partial t }\|y_1\|_{L^p(\Omega)}
                &\le b \ \|y_1\|_{L^p(\Omega)}^{p-1}|\Omega|^{1/p}
                \intertext{or,}
                 \frac{\partial}{\partial t }\|y_1\|_{L^p(\Omega)}
                &\le b \ |\Omega|^{1/p}
            \end{align*}
        or
            \begin{align*}    
                \|y_1\|_{L^p(\Omega)} &\le  \|y_{10}\|_{L^p(\Omega)} + b|\Omega|T.
            \end{align*}
        By taking $p\to \infty$, we get $y_1\in L^\infty(\Omega)$. Hence, we deduced that $y_1\in L^\infty(Q_T)$

        To find the bound for $y_2$ we shall use duality argument (see \cite{MichelePierre2010}). Adding both equations in the system \eqref{Eq: Abstract_FDE_control} and using the positivity $y_1,y_2$ we get 
            \begin{align}
                \partial_t y_2+ d_2(-\Delta)^\sigma y_2\le -\partial_t y_1 -d_1(-\Delta)^\sigma y_1+b
                \label{ineq:sum dual}
            \end{align}

        Let $\phi$ be the solution of  
            \begin{align}
                \begin{cases}
                    \phi_t + d_2 (-\Delta_N)^\sigma \phi = -\psi, & \text{in } (0,T) \times \Omega, \\
                    \phi(0) = \phi_0, & x \in \Omega.
                \end{cases}
                \label{sys:dual bddness}
            \end{align}
        with $\psi \in C_0^\infty(0,T;\Omega)$, $\psi > 0$ and $\phi_0 \in B^{2\sigma(1-1/q)}_{q,q}(\Omega)\cap L^\infty(\Omega)$, 
        where $B^s_{p,q}(\Omega)$ denotes the Besov space.
        
        \begin{remark}
            The Besov space $B^s_{p,q}(\Omega)$ is the space consisting of all functions $f \in W^{m,p}(\Omega)$ (with $m = \lfloor s \rfloor$) for which the seminorm
            \begin{equation}
                [f]_{B^s_{p,q}(\Omega)} := 
                \left( \sum_{|\alpha| = m} \int_0^1 \left( t^{-s + m} \sup_{|\tilde{h}| \le t} \| \Delta_{\tilde{h}}^2 D^\alpha f \|_{L^p(\Omega_{2\tilde{h}})} \right)^q \frac{dt}{t} \right)^{1/q}
            \end{equation}
        is finite, and the norm is given by
            \begin{equation}
                \|f\|_{B^s_{p,q}(\Omega)} := \|f\|_{W^{m,p}(\Omega)} + [f]_{B^s_{p,q}(\Omega)},
            \end{equation}
        where $\Delta_{\tilde{h}}^2 g(x) = g(x+\tilde{h}) - 2g(x) + g(x-\tilde{h})$ and $\Omega_{2\tilde{h}} := \{ x \in \Omega : x \pm \tilde{h} \in \Omega \}$.
        (see \textit{e.g} \cite[p. 8]{Triebel1992}).
        \end{remark}
        
        \begin{remark}
            Proposition 3.3 in the work of Denk, Hieber and Pr\"uss \cite{DenkHieberPruss2007} ensures the existence and uniqueness of the solution of \eqref{sys:dual bddness} 
                \begin{align}
                    \phi \in H^1(0,T; L^q(\Omega)) \cap L^q(0,T; D((-\Delta_N)^\sigma)).
                \end{align}
            Furthermore, for $p\in(1,\infty)$ and $q=p/(p-1)$ the solution satisfies the maximal $L^q$-regularity estimate:
                \begin{align}
                    \|\phi_t\|_{L^q((0,T) \times \Omega)} + \|(-\Delta_N)^\sigma \phi\|_{L^q((0,T) \times \Omega)} 
                    \leq C \|\psi\|_{L^q((0,T) \times \Omega)},
                    \label{ineq: maximal regularity}
                \end{align}
        \end{remark}

        \noindent 
        Multiplying \eqref{ineq:sum dual} with $\phi$ and integrate over $Q_T$ gives
            \begin{align*}
                 \int_{Q_T}\phi \left(\partial_t y_2 + d_2(-\Delta)^\sigma y_2\right)\ dxdt
                 &\le -\int_{Q_T} \phi \left(\partial_t y_1 + d_1(-\Delta)^\sigma y_1\right)\ dxdt+\int_{Q_T}\phi b \ dxdt.
                 \end{align*}
        Integration by parts formula \eqref{eq: int by parts lap} yields
            \begin{align*}
                \int_{Q_T}\left(-\partial_t \phi + d_2(-\Delta)^\sigma \phi\right)y_2\ dxdt
                 &\le \int_{\Omega}\phi_0(y_{20}-y_{10}) \ dx+\int_{Q_T} \left(\partial_t \phi - d_1(-\Delta)^\sigma\phi +\phi b \right)y_1\ dxdt,
            \end{align*}
        which is equivalent to
            \begin{align*}
                \int_{Q_T} \psi\ y_2\ dxdt
                 &\le C+\int_{Q_T} \left(\partial_t \phi - d_1(-\Delta)^\sigma\phi +\phi b \right)y_1\ dxdt.
            \end{align*}
        for some contant $C>0$. Using the estimate \eqref{ineq: maximal regularity}, we have 
            \begin{align*}
                \int_{Q_T} \psi\ y_2\ dxdt
                 &\le C+ \left(\|\partial_t \phi\|_{L^q(Q_T)} + d_1 \|(-\Delta)^\sigma\phi\|_{L^q(Q_T)} + b \|\phi\|_{L^q(Q_T)} \right)\|y_1\|_{L^\infty(Q_T)}\\
                 &\le C(1+\|\psi\|_{L^q(Q_T)})<+\infty
            \end{align*}
        Hence, by duality we deduce that
            \begin{align*}
                \|y_2\|_{L^p(Q_T)} < C.
            \end{align*}
        Taking $p\to \infty$ we have
            \begin{align*}
                \|y_2\|_{L^\infty(Q_T)} < C.
            \end{align*}
    \end{proof}   

    \begin{theorem}
        For $u\in U_{ad}$, $y_0\in D(A)$, and $ y_{i0}\ge 0$ on $\Omega$ for $i=1,2$, the problem \eqref{Eq: Abstract_FDE_control} has a unique strong global solution $y\in W^{1,2}(0,T; D_A(\Omega))$ such that $y_i\in N(T,\Omega)\cap L^\infty(Q_T),$ for $i=1,2$. Moreover, there exist $C>0$ such that for any $t\in [0,T]$ the following inequality holds
             \begin{align}
               \left\|\frac{\partial y_i}{\partial t}\right\|_{L^2(Q_T)}
               + \|y_i\|_{L^2(0,T;D((-\Delta_N)^{\sigma}))}
               + \|y_i\|_{H^{\sigma}(\Omega)}
               + \|y_i\|_{L^\infty(Q_T)}
               \le C,
               \label{ineq: global estimates}
            \end{align}
            for $i=1,2$
        \label{Thm: globalexistence}
    \end{theorem}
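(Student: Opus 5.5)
Our plan is to combine the local mild solution from the Local Existence theorem with the a priori $L^\infty$ bounds from the preceding lemma, and then upgrade regularity through Lemma \ref{lemma : Local and Global Existence}(ii). First, the mild solution $y\in K_T$ exists on a maximal interval $[0,T_{\max})$. The standard blow-up alternative says that if $T_{\max}<\infty$, then $\|y(t)\|_{L^\infty}\to\infty$ as $t\to T_{\max}$. The positivity and boundedness lemma gives $0\le y_i\le C(T)$ on $Q_T$ for every finite $T$, with $C(T)$ depending only on $T$, $\|y_{i0}\|_{L^\infty}$, $\|b\|_\infty$, $U_1$ and $U_2$. Hence $T_{\max}=\infty$.

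The nonlinearity $F$ is only locally Lipschitz because of the factor $\psi$. To apply Lemma \ref{lemma : Local and Global Existence}, we therefore truncate. Set $M:=\max_i\|y_i\|_{L^\infty(Q_T)}+1$ and replace $F$ by $F_M(y)=F(\pi_M y)$, where $\pi_M$ is the componentwise projection onto $[-M,M]$. Then $F_M$ is globally Lipschitz on $L^2(\Omega)^2$, by the computation in Lemma \ref{lemma F Lipschitz} with the bound $M$. Since $A$ is self-adjoint and dissipative and $y_0\in D(A)$, part (ii) of Lemma \ref{lemma : Local and Global Existence} yields a strong solution $y\in W^{1,2}(0,T;D_A(\Omega))\cap L^2(0,T;D(A))$ of the truncated problem. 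By uniqueness of mild solutions, it coincides with our solution, since the latter never leaves $[-M,M]$. Uniqueness itself follows from Gronwall applied to the integral representation, using the Lipschitz constant $L_3$ on bounded sets.

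For the estimate \eqref{ineq: global estimates}, we first observe that $g_i:=F_i(t,y)$ is bounded in $L^\infty(Q_T)$, hence in $L^2(Q_T)$, by a constant depending only on the $L^\infty$ bounds and on $U_1,U_2$. We then test $\partial_t y_i=-d_i(-\Delta_N)^\sigma y_i+g_i$ with $\partial_t y_i$ and integrate over $(0,t)$, using the symmetry \eqref{eq: int by parts lap}. This gives
\begin{align*}
\|\partial_t y_i\|^2_{L^2(Q_t)}+\frac{d_i}{2}\|(-\Delta_N)^{\sigma/2}y_i(t)\|^2_{L^2}\le \frac{d_i}{2}\|(-\Delta_N)^{\sigma/2}y_{i0}\|^2_{L^2}+\frac12\|g_i\|^2_{L^2(Q_T)}+\frac12\|\partial_t y_i\|^2_{L^2(Q_t)} .
\end{align*}
Combined with the $L^2$ bound on $y_i$, this controls $\|\partial_t y_i\|_{L^2(Q_T)}$ and $\sup_t\|y_i(t)\|_{H^\sigma}$, so $y_i\in L^\infty(0,T;H^\sigma)$. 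Testing instead with $(-\Delta_N)^\sigma y_i$ gives $d_i\|(-\Delta_N)^\sigma y_i\|^2_{L^2(Q_T)}\le C$, hence the $L^2(0,T;D((-\Delta_N)^\sigma))$ bound. Together these show $y_i\in N(T,\Omega)$.

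The main obstacle is ensuring that the constant $C$ is uniform in $u\in U_{ad}$ and in $t$. This rests entirely on the $L^\infty(Q_T)$ bound for $y_2$ obtained by duality in the preceding lemma, which we take as given. A secondary technical point is justifying the testing steps when $y$ is only a strong solution. We would carry these out first for Galerkin approximations in the eigenbasis $\{e_n\}$, where all manipulations are legitimate, and then pass to the limit using weak lower semicontinuity of the norms.
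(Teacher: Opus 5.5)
Your proposal is correct and follows essentially the paper's route: you use part (ii) of the abstract existence lemma to get a strong global solution, then derive energy estimates by testing with $\partial_t y_i$ and $(-\Delta_N)^\sigma y_i$, closed by the $L^\infty(Q_T)$ bounds of the preceding lemma. The paper instead expands $\|\partial_t y_i + d_i(-\Delta_N)^\sigma y_i\|^2_{L^2(Q_t)}=\|F_i\|^2_{L^2(Q_t)}$, which is the same computation. Your truncation $F_M$ and continuation argument supply a step the paper skips, because the Lipschitz lemma for $F$ only gives a constant on bounded sets of $K_T$, while the abstract lemma needs $F$ globally Lipschitz on $L^2(\Omega)^2$; one small fix is to project onto $[0,M]$ rather than $[-M,M]$, since $\psi$ is only assumed to be defined on $\mathbb{R}^+$.
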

    \begin{proof}
        Since $A$ is self-adjoint and dissipative on the Hilbert space $D(A)$, $y_0=(S_0,I_0)\in D(A)$ by hypothesis, and by Lemma \ref{lemma F Lipschitz} $F$ is Lipschitz continuous in $y$ uniformly with respect to $t$, then, by Lemma \ref{lemma : Local and Global Existence} part (ii), we conclude that the mild solution of problem \eqref{Eq: Abstract_FDE_control} is indeed the strong global solution. In this case $y=(y_1,y_2)\in W^{1,2}([0,T];D_A(\Omega))\cap L^2(0,T;D(A))$, i.e. $y_i\in L^2(0,T;D((-\Delta_N)^\sigma))$ for $i=1,2$.

        It remains to show that the estimate \eqref{ineq: global estimates} hold. Taking the scalar product of \eqref{Eq : FDE_control_system1} with itself and with $-d_1(-\Delta_N)^\sigma y_1$ in $L^2(Q_T)$ and combining the two identities, we obtain, for every $t\in[0,T]$, 
            \begin{align}
                \int_0^t\int_\Omega \left|\frac{\partial y_1}{\partial s}\right|^2 \ dx\ ds
                &\ +d_1^2 \int_0^t\int_\Omega |(-\Delta_N)^\sigma y_1|^2 \ dx\ ds
                \ +d_1 \left\|(-\Delta_N)^{\sigma/2} y_1(t) \right\|_{L^2(\Omega)}^2 
                \nonumber
                \\
                &\le d_1 \left\|(-\Delta_N)^{\sigma/2}  y_{10}\right\|_{L^2(\Omega)}^2+\left\|F_1\right\|_{L^2(Q_T)}^2.
                \label{eq: bound y1 with RHS}
            \end{align}
        We notice that since $y_{10}\in D\big((-\Delta_N)^\sigma\big)$ then $y_{10}\in H^{\sigma}(\Omega)$.
        %
        Observe also that since $y_i\in L^\infty(Q_T)$ then $F_1(y_1,y_2)\in L^\infty(Q_T)$ which implies $F_1(y_1,y_2)\in L^2(Q_T)$. Therefore, from \eqref{eq: bound y1 with RHS}, we have
            \begin{align}
                \int_0^t\int_\Omega \left|\frac{\partial y_1}{\partial s}\right|^2 \ dx\ ds
                &\ +d_1^2 \int_0^t\int_\Omega |(-\Delta_N)^\sigma y_1|^2 \ dx\ ds
                \ +d_1 \left\|(-\Delta_N)^{\sigma/2} y_1(t)\right\|_{L^2(\Omega)}^2 
                \le C < \infty,
                \label{eq: bound y1 no RHS}
            \end{align}
            with $C>0$ independent of $t$.
        Hence, we conclude that 
            \begin{align}
               \left\|\frac{\partial y_1}{\partial t}\right\|_{L^2(Q_T)} \le C,
               \quad
               \|y_1\|_{L^2(0,T;D((-\Delta_N)^\sigma))} \le C, 
               \quad
               \|y_1\|_{H^{\sigma}(\Omega)}\le C,
               \label{ineq: individual y_1 estimate}
            \end{align}
        which gives \eqref{ineq: global estimates}. Moreover, by taking the supremum of the last estimate of \eqref{ineq: individual y_1 estimate} over $[0,T]$, we obtain 
            \begin{align}
                y_1 \in L^\infty([0,T];H^{\sigma}(\Omega)).
                \label{eq : inclusion y1}
            \end{align}
        Combining the second estimate of last estimate of \eqref{ineq: individual y_1 estimate} and \eqref{eq : inclusion y1} we conclude that $y_1\in N(T,\Omega)$.
        The proof of the estimate \eqref{ineq: global estimates} for $y_2$, and that $y_2\in N(T,\Omega)$ can be obtained in similar manner as the proof of $y_1$.
    \end{proof}

\section{The existence of optimal solution}
\label{sec:con_existence_op}

In this section, we  establish the existence of an optimal solution for control problem \eqref{Eq: Objective_Functional}. More precisely, we  show that there exists a sequence $(y_n,u_n)=(y_{1}^n, y_{2}^n, u_{1}^n, u_{2}^n)$ where $y_{1}^n, y_{2}^n, u_{1}^n, u_{2}^n$ satisfy \eqref{eq : F1 F2 RHS}--\eqref{Eq: Abstract_FDE_control} with the limit $y_1^*,y_2^*,u_1^*,u_2^*$ that solve \eqref{eq : F1 F2 RHS}--\eqref{Eq: Abstract_FDE_control} and the convergence of $y_2,u_1,u_2$ is defined in the norm of $J(y,u)$ (i.e $y_{2}^n\to y_2^*$ in $L^2(Q_T)$, $u_{1}^n\to u_1^*$ in $L^2(Q_\omega)$ and $u_{2}^n\to u_2^*$ in $L^2(Q_T)$).

Let
    \begin{align*}
        \mathcal{Y} &= W^{1,2}([0,T], D_A(\Omega))\ \cap \ N(T,\Omega)^2\ \cap\ L^\infty(Q_T)^2;
        \\
        \mathcal{U} &=  L^2\left(0, T ; D_A(\Omega)\right);
        \\
        \mathcal{X} &=\mathcal{Y}\times \mathcal{U};
        \\
        \mathcal{P} &=  W^{1,2}([0,T], D_A(\Omega)) \times D_A(\Omega).
    \end{align*}    

    \begin{theorem}
        Under the hypotheses of Theorem \ref{Thm: globalexistence}, \eqref{Eq: Uadmissible system 2} admits an optimal solution $(y^*, u^*)\in\mathcal{X}$
        \label{thm: exist opt sol}
    \end{theorem}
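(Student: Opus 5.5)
We use the direct method of the calculus of variations. Since $J\ge 0$ on $\mathcal{X}$, the infimum $J^*=\inf_{u\in U_{ad}} J(y^u,u)$ is finite, where $y^u$ denotes the unique strong global solution given by Theorem \ref{Thm: globalexistence}. Take a minimizing sequence $u^n=(u_1^n,u_2^n)\in U_{ad}$ with $J(y^n,u^n)\to J^*$, and let $y^n=(y_1^n,y_2^n)$ be the corresponding states.

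The first step is to obtain bounds independent of $n$. Each $u^n$ satisfies $0\le u_1^n\le U_1$ and $0\le u_2^n\le U_2$. Hence the positivity and boundedness lemma gives $\|y_i^n\|_{L^\infty(Q_T)}\le C$, with $C$ depending only on $U_1,U_2$, the data $a_i,b$ and $y_0$. Consequently $F(t,y^n)$ is bounded in $L^\infty(Q_T)$ uniformly in $n$. The estimate \eqref{ineq: global estimates} then bounds $\partial_t y_i^n$ in $L^2(Q_T)$, $y_i^n$ in $L^2(0,T;D((-\Delta_N)^\sigma))$, and $y_i^n$ in $L^\infty(0,T;H^\sigma(\Omega))$, all uniformly in $n$.

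The second step is compactness. Bounded sets in $L^\infty$ are weak-$*$ compact and $U_{ad}$ is convex and closed, so along a subsequence $u^n\rightharpoonup u^*$ weakly in $L^2(Q_\omega)\times L^2(Q_T)$ (indeed weak-$*$ in $L^\infty$), and $u^*\in U_{ad}$. For the states:
\begin{itemize}
\item $\partial_t y_i^n\rightharpoonup \partial_t y_i^*$ weakly in $L^2(Q_T)$;
\item $(-\Delta_N)^\sigma y_i^n\rightharpoonup(-\Delta_N)^\sigma y_i^*$ weakly in $L^2(Q_T)$;
\item $y_i^n\rightharpoonup y_i^*$ weak-$*$ in $L^\infty(Q_T)$.
\end{itemize}
Since $D((-\Delta_N)^\sigma)$, or $H^\sigma(\Omega)$, embeds compactly in $L^2(\Omega)$ for $\sigma>0$ (the eigenvalues $\lambda_n\to\infty$), the Aubin--Lions lemma gives $y_i^n\to y_i^*$ strongly in $L^2(Q_T)$. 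After a further subsequence the convergence also holds a.e. in $Q_T$.

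The third step, which is the main obstacle, is passing to the limit in the nonlinear and bilinear terms.
\begin{itemize}
\item For $y_1^ny_2^n\psi(y_2^n)$: the $y^n$ are uniformly bounded, $\psi\in C^1$ is Lipschitz on $[0,C]$, and $y^n\to y^*$ strongly in $L^2$. The Lipschitz-type estimate of Lemma \ref{lemma F Lipschitz}, applied pointwise, gives strong $L^2(Q_T)$ convergence to $y_1^*y_2^*\psi(y_2^*)$.
\item For the control terms, write $\chi_\omega u_1^ny_1^n-\chi_\omega u_1^*y_1^*=\chi_\omega u_1^n(y_1^n-y_1^*)+\chi_\omega(u_1^n-u_1^*)y_1^*$. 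The first part tends to $0$ strongly because $u_1^n\le U_1$. The second tends to $0$ weakly in $L^2$ because $y_1^*\in L^\infty$. The term $u_2^ny_2^n$ is handled the same way.
\end{itemize}
It follows that $F(t,y^n)\rightharpoonup F(t,y^*)$ weakly in $L^2(Q_T)$, where $F$ is evaluated with the control $u^*$. Passing to the limit in $\partial_t y^n=Ay^n+F(t,y^n)$ in the weak $L^2(Q_T)$ sense shows that $y^*$ solves \eqref{Eq: Abstract_FDE_control} with control $u^*$. The initial condition survives because $y^n\to y^*$ in $C([0,T];L^2)$ weakly, given the $W^{1,2}$ bound. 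By uniqueness in Theorem \ref{Thm: globalexistence}, $y^*=y^{u^*}$, and lower semicontinuity of norms places $y^*\in\mathcal{Y}$.

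Finally, $J$ is weakly lower semicontinuous: $y_2^n\to y_2^*$ strongly in $L^2(Q_T)$, and the squared $L^2$ norms of the controls are convex and continuous, hence weakly l.s.c. Therefore $J(y^*,u^*)\le\liminf_n J(y^n,u^n)=J^*$, and $(y^*,u^*)\in\mathcal{X}$ is optimal.

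This argument yields only weak convergence of the controls. The stronger convergence announced at the start of the section is not needed for existence; it would follow only a posteriori, by combining $\|u^n\|\to\|u^*\|$ with weak convergence, if the limit of the cost equals its infimum componentwise.
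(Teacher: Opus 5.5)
Your proof is correct. It has the same overall architecture as the paper's proof: a minimizing sequence, uniform bounds from \eqref{ineq: global estimates}, compactness, passage to the limit in the state equation, and weak lower semicontinuity of $J$. It differs from the paper at the two steps that carry the weight, and in both places your version is the one that actually works.

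For compactness, the paper does not use Aubin--Lions. Instead it does the following:
\begin{itemize}
\item it shows via the energy identity that the real-valued functions $t\mapsto\|y_i^n(t)\|^2_{L^2(\Omega)}$ are equicontinuous;
\item it applies Arzel\`a--Ascoli to these scalar functions;
\item it combines this ``convergence of norms'' with weak convergence to conclude $y_i^n\to y_i^*$ in $L^2(Q_T)$.
\end{itemize}
That argument only controls norms. It never identifies the uniform limit of $\|y_i^n(t)\|^2_{L^2(\Omega)}$ with $\|y_i^*(t)\|^2_{L^2(\Omega)}$. It also cannot give compactness in $C([0,T];L^2(\Omega))$: time-independent, spatially oscillating sequences of fixed norm pass this equicontinuity test. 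What is missing is spatial compactness, and you supply it through the compact embedding $D((-\Delta_N)^\sigma)\hookrightarrow L^2(\Omega)$ combined with the bound on $\partial_t y_i^n$. The paper's route would be more elementary only if it could avoid that ingredient, and it cannot.

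For the bilinear control terms, the paper asserts strong $L^2$ convergence of $\chi_\omega u_1^n y_1^n$ and $u_2^n y_2^n$. Its estimate silently replaces $\int|u_1^n-u_1^*|\,|y_1^*|$ by $\int (u_1^n-u_1^*)\,y_1^*$, and with merely weakly convergent controls that step is unavailable. Your splitting yields weak convergence, which is all you need, since you pass to the limit in the equation weakly in $L^2(Q_T)$. You also verify the initial condition and use uniqueness to identify $y^*=y(u^*)$; the paper leaves both implicit.

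One correction to your closing remark: strong convergence of the controls along the subsequence follows a posteriori with no extra hypothesis.
\begin{itemize}
\item Since $J(y^*,u^*)=J^*=\lim_n J(y^n,u^n)$ and $\|y_2^n\|_{L^2(Q_T)}\to\|y_2^*\|_{L^2(Q_T)}$, the sum $\frac{\sigma_1}{2}\|u_1^n\|^2_{L^2(Q_\omega)}+\frac{\sigma_2}{2}\|u_2^n\|^2_{L^2(Q_T)}$ converges to its value at $u^*$.
\item Each summand is separately weakly lower semicontinuous, so each summand converges, and hence $\|u_i^n\|\to\|u_i^*\|$.
\item Together with weak convergence in a Hilbert space, this gives $u_i^n\to u_i^*$ strongly.
\end{itemize}
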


    \begin{proof}
        Let $J^*=\inf_{(y,u)\in \mathcal{X}}\{J(y, u)\}:= {J(y_1^*,y_2^*, u_1^*, u_2^*)}$
        with $J$ is as defined in \eqref{Eq: Objective_Functional}. We will show that if there exists $u_n=(u_{1}^n, u_{2}^n)\in U_{ad}$, and $u_{1}^n\to u_1^*$ in $L^2(Q_\omega)$ and $u_{2}^n\to u_2^*$ in $L^2(Q_T)$ then $(u_1^*,u_2^*)$ is indeed the minimizer. We shall construct the sequence.

        Since $u_1,u_2, y_1, y_2$ are bounded uniformly in $L^\infty(Q_T)$ then $J^*=\inf_{(y, u)\in\mathcal{X}}\{J(y, u)\}$ is finite. Therefore, there exist a sequence $(y_n,u_n)$ with $y_n=(y_{1}^n,y_{2}^n)\in W^{1,2}([0,T];D_A(\Omega))$ solution of
            \begin{align}
                \frac{\partial y_{i}^n}{\partial t} = -d_i(-\Delta_N)^\sigma y_{i}^n+F_i(y_n), 
                \qquad i=1,2 
                \label{eq : FDE yn}
            \end{align}
    where $F_i$ for $i=1,2$ are given in \eqref{eq : F1 F2 RHS}. 

    Before proceeding further, it is noteworthy that, from \eqref{ineq: global estimates}, there exist $C>0$ such that
    \begin{align}  
       \left\|\frac{\partial y_{i}^n}{\partial t}\right\|_{L^2(Q_T)}\le C,
       \quad
       \|y_{i}^n\|_{L^2(0,T;D((-\Delta_N)^{\sigma}))}\le C, 
       \quad
       \|y_{i}^n\|_{H^{\sigma}(\Omega)} \le C,
       \quad
       \|y_{i}^n\|_{L^\infty(Q_T)}\le C,
       \label{ineq: global estimates y_n}
    \end{align}
    We outline the proof in the following steps: 

    \textbf{Step 1.} First, to show that $y_{2}^n$ converges to a limit in the first term of functional $J$, we prove that $y_{2}^n\to y_2^*$ in $L^2(Q_T)$. We note that since $\{y_{i}^n\}_{n\in\mathbb{N}}\subset L^{\infty}(Q_T)$ and  $\{y_{i}^n\}_{n\in\mathbb{N}}\subset H^{\sigma}(\Omega)$ then $\{y_{i}^n\}_{n\in\mathbb{N}}\subset L^{2}(Q_T)$. Thus, for $i=1,2$, the sequence $\{\|y_{i}^n\|_{L^2(\Omega)}(t)\}_{n\in\mathbb{N}}$ is uniformly bounded in $L^{2}(Q_T)$. Now it remains to show that $\{\|y_{i}^n\|_{L^2(\Omega)}(t)\}_{n\in\mathbb{N}}$ is equicontinuous.

    For any $\epsilon>0$, we want to choose $\delta>0$ such that for any $t,\tilde{t}$ with $|t-\tilde{t}|\le \delta$,
        \begin{align}
            \left| 
            \|y_{i}^n\|^2_{L^2(\Omega)}(t) - \|y_{i}^n\|^2_{L^2(\Omega)}(\tilde{t}) 
            \right|
            =
            \left| 
            \int_\Omega |y_{i}^n(x,t)|^2 dx- \int_\Omega |y_{i}^n(x,\tilde{t})|^2 dx
            \right|
            <\epsilon
        \end{align}
    for $i=1,\ 2$.
    
    \textbf{Step 2.} We show that, for $i=1,\ 2$, the sequence $\{\|y_{i}^n\|^2_{L^{2}(\Omega)}(t)\}_{n\in\mathbb{N}}\subset C([0,T])$ is relatively compact by Arzela-Ascoli theorem. 

    \textbf{Step 3.} We conclude that since $\{\|y_{i}^n\|^2_{L^{2}(\Omega)}(t)\}_{n\in\mathbb{N}}\subset C([0,T])$, then there exists a subsequence $\{y_i^{n_k}\}_{k\in\mathbb{N}}$ of $\{y_i^n\}_{n\in\mathbb{N}}$ such that $\|y_i^{n_k}\|^2_{L^{2}(\Omega)}(t)$ converges to $\|y_i^*(t)\|^2_{L^{2}(\Omega)}$ uniformly in $t\in[0,T]$, for $i=1,\ 2$. Moreover, we need to also show that \eqref{ineq: global estimates y_n} is also satisfied by $y^*=(y_1^*,y_2^*)$.


    Multiplying the $i$-th equation of \eqref{eq : FDE yn} by $y_i^n$ and integrating over $\Omega\times [0,t]$ and $\Omega\times [0,\tilde t]$ and for $t<\tilde t$, a standard estimate using \eqref{ineq: global estimates y_n} yields
        
        \begin{align*}
            \int_\Omega |y_{i}^n(x,t)|^2-|y_{i}^n(x,\tilde{t})|^2 dx
            &=
            -2d_i \int_{\tilde{t}}^t \left\|(-\Delta_N)^{\sigma/2} y_{i}^n\right\|^2_{L^2(\Omega)}(s)\ ds
            \\
            &\quad
            + 2\int_{\tilde{t}}^t \int_{\Omega} y_{i}^n(x,s)  F_i(y_n,s)\ dxds,
            \intertext{so,}
            \left|
            \left\|y_{i}^n\right\|^2_{L^2(\Omega)}(t) - \left\|y_{i}^n\right\|^2_{L^2(\Omega)}(\tilde{t})  
            \right|
            &\le 2 d_i 
            \left\|(-\Delta_N)^{\sigma/2} y_{i}^n\right\|^2_{L^2(\Omega)}\ |t-\tilde{t}| 
            \\
            &\quad
            +2\|y_{i}^nF_i(y^n)\|_{L^\infty(Q_T)}\ |\Omega|\ |t-\tilde{t}|
            \\
            &\le  C_i |t-\tilde{t}|<\varepsilon, \qquad\text{for some $C_i>0$},
        \end{align*}
        by choosing $\delta=\epsilon/C_i$. Hence we conclude that the sequence $\{\|y_{i}^n\|_{L^{2}(\Omega)}(t)\}_{n\in\mathbb{N}}$ is relatively compact in $C([0,T])$ or $\{y_{i}^n\}_{n\in\mathbb{N}}$ is relatively compact in $C([0,T], L^2(\Omega))$.

         Consequently,
            \begin{itemize}
                \item There exists a subsequence $\{\|y_i^{n_k}\|\}_{n_k=1}^\infty$ of $\{\|y_{i}^n\|_{L^{2}(\Omega)}\}_{n=1}^\infty$ such that 
                    \begin{align}
                        \|y_i^{n_k}\|_{L^{2}(\Omega)} \rightarrow \|y_i^*\|_{L^{2}(\Omega)} 
                        \qquad \text{in} \quad
                        L^2(\Omega)
                        \qquad \text{uniformly with respect to }t.
                    \end{align}
    
                \item 
                By Lebesgue's theorem, we have 
                    \begin{align}
                        \lim_{n_k\to\infty} \int_0^T \int_{\Omega} |y_i^{n_k}|^2 \, dx\, dt
                        = \int_0^T \lim_{n_k\to\infty} \int_{\Omega}  |y_i^{n_k}|^2 \, dx\, dt
                        = \int_0^T \int_{\Omega} |y_{i}^*|^2 \, dx\, dt,
                        \label{eq : ynk conv in L2(Q_T)}
                    \end{align}
                or $\|y_i^{n_k}\|_{L^2(Q_T)} \xrightarrow{n_k\to\infty} \|y_i^*\|_{L^2(Q_T)}$ for $i=1,2$.
            \end{itemize}
        Since $y_i^{n_k}\in {L^\infty(Q_T)}$, then $y_i^{n_k}$ is bounded in $L^2(Q_T)$. Thus, 
            \begin{align}
                y_i^{n_k} \rightharpoonup y_i^*
                \qquad \text{in} \quad L^2(Q_T),
                \qquad i=1,2.
                \label{eq : ynk weak conv in L2(Q_T)}
            \end{align}
        Combining the norm convergence \eqref{eq : ynk conv in L2(Q_T)} and the weak convergence \eqref{eq : ynk weak conv in L2(Q_T)}, we have the strong convergence, namely
            \begin{align}
                y_{i}^{n_k} \rightarrow y_{i}^*
                \qquad \text{in} \quad L^2(Q_T),
                \qquad i=1,2.
            \end{align}
        However, notice that up until this point we have not shown that $y_i^*$ is a solution of \eqref{Eq: Abstract_FDE_control}.
           For simplicity of notation, we rename the subsequence $\{y_i^{n_k}\}_{n_k=1}^\infty$ satisfying \eqref{eq : ynk conv in L2(Q_T)} by $\{y_{i}^n\}_{n=1}^\infty$ for $i=1,2$.
        Now we show that $y^*$ is also a solution of \eqref{Eq: Abstract_FDE_control} that satisfies \eqref{ineq: global estimates}, i.e. 
            \begin{align*}
                y^*\in W^{1,2}([0,T], D_A(\Omega))
                \intertext{and}
                y^*\in \left(L^2\big(0,T;D((-\Delta_N)^\sigma)\big) \right)^2 
                &\cap \left(L^{\infty}\big([0,T]; H^{\sigma}(\Omega) \big)\right)^2.
            \end{align*}

        \begin{enumerate}
            \item Since  $\left\|\frac{\partial y_{i}^n}{\partial t}\right\|_{L^2(Q_T)}\le C$, then $\frac{\partial y_i^{n_k}}{\partial t} \rightharpoonup \frac{\partial y_i^*}{\partial t}$ in $L^2(Q_T)$.
            \item 
            For any $\varphi\in C_0^\infty(\Omega)$, using the integration by parts formula \eqref{eq: int by parts lap}, we have
                \begin{align}
                     \lim_{n\to\infty} \int_0^T \int_{\Omega} \varphi\ (-\Delta_N)^\sigma y_{i}^n \ dx\ dt
                     &= \lim_{n\to\infty}\int_0^T \int_{\Omega}  (-\Delta_N)^\sigma \varphi\ y_{i}^n \ dx\ dt
                     \nonumber \\
                     &= \int_0^T \int_{\Omega}  (-\Delta_N)^\sigma \varphi\ y_{i}^* \ dx\ dt
                     \nonumber \\
                     &= \int_0^T \int_{\Omega}  \varphi\ (-\Delta_N)^\sigma y_{i}^* \ dx\ dt,
                     \label{eq: weak conv ynt}
                \end{align}
            where the second equality follows from the strong convergence $y_i^n\to y_i^*$ in $L^2(Q_T)$ established in \eqref{eq : ynk conv in L2(Q_T)}.
            
            \item Next, the convergence of every product term $y_{i}^ny_{j}^n$ on the right-hand side of \eqref{Eq: Abstract_FDE_control} for $i,j=1,2$, is immediate, namely
                        $$y_{i}^ny_{j}^n\to y_{i}^*y_{j}^* \qquad \text{in } L^2(Q_T).$$ 
                \item We show that $ u_{1}^n y_{1}^n$ to $u_{1}^* y_{1}^*$ in $L^2(Q_{\omega})$ and  $u_{2}^n y_{2}^n$ to $u_{2}^* y_{2}^*$ in $L^2(Q_T)$.
                \begin{itemize}
                    \item
                    Observe that since $u_{1}^n\in L^\infty(Q_{\omega})$ and $u_{2}^n\in L^\infty(Q_T)$, then $u_{1}^n$ is bounded in $L^{2}(Q_\omega)$ and $u_{2}^n$ is bounded in $L^{2}(Q_T)$. Therefore, using Lebesgue's theorem similar as before, we conclude that $u_{1}^n \rightharpoonup u_1^*$ in $L^2(Q_{\omega})$ and $u_{2}^n \rightharpoonup u_2^*$ in $L^2(Q_T)$.
                    Since $U_{ad}$ is a closed and convex set, it contains all of its limit points. In particular, it contains its weakly limit points. Thus, $(u_1^*,u_2^*)\in U_{ad}$.

                    \item Since $u_{1}^n \rightharpoonup u_1^*$ in $L^2(Q_{\omega})$, then for all $\varphi\in L^2(Q_\omega)$ we have
                        \begin{align}
                            \int_0^T\int_\Omega \chi_\omega u_{1}^n\varphi\ dx dt \to \int_0^T\int_\Omega \chi_\omega u_{1}^*\varphi\ dx dt.
                            \label{eq: u1n phi to  u1 phi}
                        \end{align}
                   Observe that
                        \begin{align*}
                            \|u_{1}^ny_{1}^n - u_{1}^*y_{1}^*\|^2_{L^2(Q_\omega)} 
                            &\le 2\left(\|u_{1}^n\|^2_{L^2(Q_{\omega})}\|y_{1}^n-y_1\|^2_{L^2(Q_{\omega})} 
                            +\|(u_{1}^n-u_1)y_1\|^2_{L^2(Q_{\omega})}
                            \right)
                            \intertext{Notice that, the first term goes to $0$ as $n\to \infty$ due to the strong convergence of $y_{1}^n\to y_!^*$ in $L^2(Q_T)$. Now, since $u_{1}^n,y_{1}^n\in L^\infty (Q_T)$, then}
                            \|(u_{1}^n-u_1)y_1\|^2_{L^2(Q_{\omega})}
                            &= \int_0^T\int_\omega |(u_{1}^n-u_1)y_1|^2 dx dt\\
                            &= \|y_{1}\|_{\infty}\left(\|u_{1}^n\|_{\infty)}+\|u_1\|_{\infty}\right)
                            \int_0^T\int_\omega |(u_{1}^n-u_1)| |y_1| dx dt\\
                            &= C \int_0^T\int_\Omega \chi_\omega u_{1}^ny_1 -\chi_\omega u_1 y_1 dx dt
                            \to 0,
                            \quad\text{as }n\to\infty,
                        \end{align*}
                        where we have used \eqref{eq: u1n phi to  u1 phi}. Hence, we assert that 
                            \begin{align}
                                \chi_\omega u_{1}^ny_{1}^n\xrightarrow{n\to \infty} \chi_\omega u_1^*y_1^*, \qquad\text{in }\quad L^2(Q_\omega).
                            \end{align}
                        Similarly we conclude that
                            \begin{align}
                                u_{2}^ny_{2}^n \xrightarrow{n\to \infty} u_2^*y_2^*  \qquad\text{in }\quad L^2(Q_\omega).
                            \end{align}
                \end{itemize}
            
            Combining all the results on the right-hand side, we see that we have established a sequence $\{y_{n},u^n\}$ that satisfy \eqref{Eq: Abstract_FDE_control}. Passing \eqref{eq : FDE yn} to the limit as $n\to \infty$ in $L^2(Q_T)$, we see that $(y^*,u^*)$  solves the problem \eqref{Eq : FDE_control_system1}-\eqref{Eq : FDE_control_system2}. 
                \begin{align}
                    \frac{\partial y_{i}^n}{\partial t} 
                    &\rightharpoonup \frac{\partial y_i^*}{\partial t}, 
                    \qquad \text{in } L^2(Q_T),
                    \\
                    y_{i}^n &\rightharpoonup y_i^*
                    \qquad\quad \text{ in } L^2(0,T; D((-\Delta_N)^\sigma)),
                     \\
                    y_{i}^n &\rightharpoonup y_i^*
                    \qquad \quad\text{ in } H^{\sigma}(\Omega),
                    \\
                    y_{i}^n &\rightharpoonup y_i^*
                    \qquad\quad \text{ in } L^{\infty}(Q_T).
                \end{align}
            The last two conditions allow us to assert that $y_i^* \in L^\infty(0,T; H^{\sigma}(\Omega))$, while the first condition gives $\partial_t y_i^*\in L^2(Q_T)$, which leads us to conclude that $y^*\in W^{1,2}([0,T],D_A(\Omega))$.
            \end{enumerate}
           
          Lastly, we show that $(y^*,u^*)\in \mathcal{X}$ is a minimizer. Indeed,
            \begin{align*}
                J(y^*,u^*)&= \frac{\rho}{2}\|y_2^*\|^2_{L^2(Q_T)} 
                + \frac{\sigma_1}{2}\|u_1^*\|^2_{L^2(Q_\omega)} 
                + \frac{\sigma_2}{2}\|u_2^*\|^2_{L^2(Q_T)}
                \\
                &\le \liminf_{n\to \infty } \left(
                \frac{\rho}{2}\|y_{2}^n\|^2_{L^2(Q_T)} 
                + \frac{\sigma_1}{2}\|u_{1}^{n}\|^2_{L^2(Q_\omega)} 
                + \frac{\sigma_2}{2}\|u_{2}^{n}\|^2_{L^2(Q_T)}\right)
                \\
                &= \lim_{n\to \infty } 
                \frac{\rho}{2}\|y_{2}^n\|^2_{L^2(Q_T)} 
                + \frac{\sigma_1}{2}\|u_{1}^{n}\|^2_{L^2(Q_\omega)} 
                + \frac{\sigma_2}{2}\|u_{2}^{n}\|^2_{L^2(Q_T)}
                \\
                &= \inf_{(y,u)\in\mathcal{X}} J(y,u).
            \end{align*}
        This shows that $J$ attains its minimum at $(y^*,u^*)$.
    \end{proof}

\section{Necessary Optimality Condition}
\label{sec:con_necessary_op}

    In this section, we specify the optimality condition of the problem \eqref{Eq : FDE_control_system1}-\eqref{Eq: Uadmissible system 2} and we give a characterization of the optimal control. To establish the characterization, we need to find the dual problem and the directional derivative of $y$ with respect to $ u^*$.
    For this purpose, let
    \begin{align*}
        u=
            \begin{pmatrix}
                u_1 \\
                u_2
            \end{pmatrix}
        \in U_{ad} 
        \quad \text { and } \quad
         u^*=
            \begin{pmatrix}
                u_1^* \\
                u_2^*
            \end{pmatrix}
        \in U_{ad}. 
    \end{align*}
    To compute the Gateaux derivative of $y$, seen as a function of $u$
        \begin{align*}
            y: U_{ad} & \longrightarrow W^{1,2}([0, T], D_A(\Omega)) \\
            u \  & \longmapsto \  y(u),
        \end{align*}
            Let $ y^*,  u^*$ be an optimal pair of problems \eqref{Eq : FDE_control_system1}-\eqref{Eq: Uadmissible system 2}, and let
                \begin{align}
                    u^{\varepsilon}=u^{*} + \varepsilon u \in U_{ad},
                    \qquad \varepsilon>0,
                    \label{eq: u_epsilon}
                \end{align}
            be a control function with $u \in L^2 \left(0, T ; L^2(\Omega)\right)$ and $u \in U_{ad}$. 
            %
            Denote by $y^\varepsilon=(y_1^\varepsilon,y_2^\varepsilon)=(y_1,y_2)(u^\varepsilon)$ and $y^*=(y_1^*,y_2^*)=(y_1,y_2)(u^*)$ the corresponding solutions of \eqref{Eq : FDE_control_system1}-\eqref{Eq: IC_system}

    Let
        \begin{align*}
            P(y)=
            \begin{pmatrix}
                y_1 y_2 \psi(y_2)\\
                -y_1 y_2 \psi(y_2)
            \end{pmatrix}
        \qquad\text{and}\qquad
            L= 
            \begin{pmatrix}
                - \chi_\omega y_1^*  &0\\
                    0                &- y_2^*  
            \end{pmatrix}.
        \end{align*}
        
        \begin{theorem}
            The mapping $y: U_{ad} \rightarrow W^{1,2}([0, T], D_A(\Omega))$ with $y_i\in N(T, \Omega)$ for $i=1,2$, is Gateaux differentiable with respect to $ u^*$. For any direction $u\in U_{ad}$, $y'( u^*)u=z$ is the unique solution in $W^{1,2}([0, T], D_A(\Omega))$ with $z_i \in N(T, \Omega)$ of the following equation
            \begin{align*}
                \frac{\partial z}{\partial t}  & =A z+B z+L u, \\
                z(0) & =0,
            \end{align*}
            where $B =\frac{\partial P}{\partial y}(y^*)$.
        \end{theorem}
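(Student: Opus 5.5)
We follow the standard difference-quotient argument. Set $z^\varepsilon = (y^\varepsilon - y^*)/\varepsilon$, with $y^\varepsilon=y(u^*+\varepsilon u)$ as in \eqref{eq: u_epsilon}. The plan is to show that $z^\varepsilon$ is bounded in $W^{1,2}([0,T],D_A(\Omega))\cap N(T,\Omega)^2$ uniformly in $\varepsilon$, extract a limit $z$, identify the linearized equation it satisfies, and invoke uniqueness of that equation to obtain convergence of the whole family.

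\textbf{Step 1 (equation for $z^\varepsilon$).} Subtract the equations for $y^*$ and $y^\varepsilon$ and divide by $\varepsilon$. The control terms produce, for the first component,
\[
-\chi_\omega\frac{u_1^\varepsilon y_1^\varepsilon - u_1^* y_1^*}{\varepsilon} = -\chi_\omega u_1^* z_1^\varepsilon - \chi_\omega u_1 y_1^\varepsilon ,
\]
and the analogous expression for the second. The nonlinearity $y_1y_2\psi(y_2)$ is treated with the mean value theorem, which is legitimate since $\psi\in C^1$. Its difference quotient then equals $\int_0^1 \frac{\partial P}{\partial y}\bigl(y^*+\theta(y^\varepsilon-y^*)\bigr)\,d\theta\, z^\varepsilon$. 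As a result, $z^\varepsilon$ solves a linear system
\[
\partial_t z^\varepsilon = Az^\varepsilon + B^\varepsilon z^\varepsilon + L^\varepsilon u, \qquad z^\varepsilon(0)=0,
\]
in which $B^\varepsilon$ and $L^\varepsilon$ (containing $y^\varepsilon$ in place of $y^*$) have $L^\infty(Q_T)$ coefficients bounded uniformly in $\varepsilon$. This uniformity follows from the $L^\infty$ bound in \eqref{ineq: global estimates}, which holds uniformly over $U_{ad}$, together with the continuity of $\psi,\psi'$ on bounded sets. The linear terms $-a_i z_i^\varepsilon$ and $-u^* z^\varepsilon$ are also bounded zeroth-order terms.

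\textbf{Step 2 (uniform estimates).} Multiply by $z^\varepsilon$, use \eqref{eq: int by parts lap} and the dissipativity of $A$, and apply Gronwall. This gives $\|z^\varepsilon\|_{L^\infty(0,T;L^2)}\le C\|u\|_{L^2}$. Next, repeat the energy estimate that produced \eqref{eq: bound y1 with RHS}, testing with $\partial_t z^\varepsilon$ and with $(-\Delta_N)^\sigma z^\varepsilon$. Since the right-hand side $B^\varepsilon z^\varepsilon+L^\varepsilon u$ is bounded in $L^2(Q_T)$, this yields uniform bounds on $\partial_t z^\varepsilon$ in $L^2(Q_T)$, on $z^\varepsilon$ in $L^2(0,T;D((-\Delta_N)^\sigma))$, and on $z^\varepsilon$ in $L^\infty(0,T;H^\sigma)$.

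A byproduct is $\|y^\varepsilon-y^*\|=\varepsilon\|z^\varepsilon\|\to0$. Combined with the uniform $L^\infty$ bounds, this gives $y^\varepsilon\to y^*$ in $L^p(Q_T)$ for every $p<\infty$, and hence $B^\varepsilon\to B$ and $L^\varepsilon\to L$ a.e. and in every $L^p$, with uniform $L^\infty$ bounds.

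\textbf{Step 3 (passage to the limit).} By Step 2 and an Aubin--Lions argument, we extract a subsequence with $z^\varepsilon\rightharpoonup z$ weakly in the energy spaces and $z^\varepsilon\to z$ strongly in $L^2(Q_T)$. The compact embedding needed here is that of $D((-\Delta_N)^\sigma)$ into $L^2$, since $\lambda_n\to\infty$. The products $B^\varepsilon z^\varepsilon$ then converge weakly to $Bz$, by strong convergence of $z^\varepsilon$ combined with bounded a.e. convergence of $B^\varepsilon$ (dominated convergence). Passing to the limit in the weak form, exactly as in the proof of Theorem \ref{thm: exist opt sol}, shows that $z$ solves $\partial_t z = Az+Bz+Lu$, $z(0)=0$.

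\textbf{Step 4 (uniqueness).} Existence and uniqueness for this linear problem follow from Lemma \ref{lemma : Local and Global Existence}, since the map $z\mapsto Bz+Lu$ is Lipschitz with $L^\infty$ coefficients. Uniqueness implies that the whole family $z^\varepsilon$ converges to $z$. Finally, $z$ is linear in $u$ and bounded by $C\|u\|$, which identifies it as $y'(u^*)u$.

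\textbf{Main obstacle.} The delicate point is Step 2, namely obtaining bounds on $B^\varepsilon$ that are uniform in $\varepsilon$. This requires the $L^\infty(Q_T)$ bound of \eqref{ineq: global estimates} to be independent of the control in $U_{ad}$, which the duality argument does provide, since it uses only $U_1,U_2$. It also requires handling $\psi'$ on the range of $y_2$, which is a bounded set.
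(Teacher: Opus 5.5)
Your proof is correct and shares the paper's skeleton: a difference quotient $z^\varepsilon$, a linear system with coefficient matrix $B^\varepsilon$, a bound uniform in $\varepsilon$, and hence $y^\varepsilon\to y^*$ and $B^\varepsilon\to B$. The key estimate and the passage to the limit, however, are done differently.

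The paper splits the control term as $u^\varepsilon z^\varepsilon+u\,y^*$, so $L$ is fixed and $u^\varepsilon$ sits in $B^\varepsilon$. It linearizes $P$ through the quotients $M_1^\varepsilon, M_2^\varepsilon$. It bounds $z^\varepsilon$ in $L^\infty(\Omega)$ from the mild formula $z^\varepsilon(t)=\int_0^t\mathcal{S}(t-s)\bigl(B^\varepsilon z^\varepsilon+Lu\bigr)(s)\,ds$ together with Gronwall. After obtaining $B^\varepsilon\to B$ in $L^2(Q_T)$, it simply lets $\varepsilon\to0$ in the equation.

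That route is shorter and gives a pointwise bound, but it has three weaknesses:
\begin{itemize}
\item it uses $\|\mathcal{S}(t)\|\le 1$ on $L^\infty(\Omega)$, although only $L^2$-contractivity was derived from Lumer--Phillips;
\item it divides by $y_i^\varepsilon-y_i^*$, which may vanish (your integral mean-value form avoids this);
\item it never actually proves that $z^\varepsilon$ converges, that the limit is unique, or that $z_i\in N(T,\Omega)$.
\end{itemize}
Your $L^2$ and higher-order energy estimates, Aubin--Lions compactness, and uniqueness via Lemma~\ref{lemma : Local and Global Existence} supply exactly these missing pieces, entirely within the Hilbert setting where dissipativity of $A$ was established.

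One point to tighten. As written, you only obtain $z^\varepsilon\rightharpoonup z$ weakly in $W^{1,2}([0,T],D_A(\Omega))$ and strongly in $L^2(Q_T)$. Gateaux differentiability into $W^{1,2}$ needs norm convergence. You can get it by applying your Step~2 estimates to $w^\varepsilon=z^\varepsilon-z$, which solves $\partial_t w^\varepsilon=Aw^\varepsilon+B^\varepsilon w^\varepsilon+(B^\varepsilon-B)z+(L^\varepsilon-L)u$ with $w^\varepsilon(0)=0$. Its forcing tends to $0$ in $L^2(Q_T)$, because $B^\varepsilon\to B$ in measure with uniform bounds and $L^\varepsilon-L$ is controlled by $y^\varepsilon-y^*$. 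This gives norm convergence in all the energy spaces, and it also makes the compactness step unnecessary once $z$ is obtained directly from the lemma.
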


        \begin{proof}
           Let
            \begin{align*}
                y^\varepsilon= y^* + \varepsilon z^\varepsilon,
            \end{align*}
            then
                \begin{align*}
                    z^\varepsilon=\frac{y^\varepsilon- y^*}{\varepsilon}.
                \end{align*} 
            Subtracting the system  \eqref{Eq : FDE_control_system1}-\eqref{Eq: IC_system} corresponding to $ y^*$ from the system corresponding to $y^\varepsilon$ and dividing by $\varepsilon$, yields
                \begin{align}
                    \frac{\partial z_1^\varepsilon}{\partial t} &= -d_1(-\Delta_N)^\sigma z_1^\varepsilon 
                    -\frac{
                    P(y_1^\varepsilon, y_2^\varepsilon)-P(y_1^*, y_2^*)
                    }{\varepsilon} 
                    -a_1 z_1^\varepsilon 
                    - \chi_\omega (u_1^\varepsilon z_1^\varepsilon+u_1 y_1^*), 
                    \label{eq: dz1^epsilon 2}
                    \\
                    \frac{\partial z_2^\varepsilon}{\partial t} &= -d_2(-\Delta_N)^\sigma z_2^\varepsilon 
                    +\frac{
                    P(y_1^\varepsilon, y_2^\varepsilon)-P(y_1^*, y_2^*)
                    }{\varepsilon} 
                    -a_2 z_2^\varepsilon - (u_2^\varepsilon z_2^\varepsilon + u_2 y_2^*).
                    \label{eq: dz2^epsilon 2}
                \end{align}
            By letting 
                \begin{align}
                    M_1^\varepsilon = \frac{P(y_1^\varepsilon, y_2^\varepsilon)-P(y_1^*, y_2^\varepsilon)}{y_1^\varepsilon-y_1^{*}},
                    \quad\text{and}\quad
                    M_2^\varepsilon = \frac{P(y_1^*, y_2^\varepsilon)-P(y_1^*, y_2^*)}{y_2^\varepsilon-y_2^{*}} ,
                \end{align}
            then 
                \begin{align*}
                    \frac{P(y_1^\varepsilon, y_2^{\varepsilon})-P(y_1^*, y_2^*)}{\varepsilon}
                    = M_1^\varepsilon z_1^\varepsilon + M_2^\varepsilon z_2^\varepsilon.
                \end{align*}
            Thus, \eqref{eq: dz1^epsilon 2}-\eqref{eq: dz2^epsilon 2} becomes
                \begin{align}
                    \frac{\partial z_1^\varepsilon}{\partial t} &= -d_1(-\Delta_N)^\sigma z_1^\varepsilon 
                    - M_1^\varepsilon z_1^\varepsilon 
                    - M_2^\varepsilon z_2^\varepsilon
                    -a_1 z_1^\varepsilon 
                    - \chi_\omega u_1^\varepsilon z_1^\varepsilon
                    -\chi_\omega u_1 y_1^*, 
                    \label{eq: dz1^epsilon 3}
                    \\
                    \frac{\partial z_2^\varepsilon}{\partial t} &= -d_2(-\Delta_N)^\sigma z_2^\varepsilon 
                    + M_1^\varepsilon z_1^\varepsilon 
                    + M_2^\varepsilon z_2^\varepsilon
                    -a_2 z_2^\varepsilon 
                    - u_2^\varepsilon z_2^\varepsilon - u_2 y_2^*;
                    \label{eq: dz2^epsilon 3}
                \end{align}
            or  
                \begin{align}
                    \begin{cases}
                        \frac{\partial z^\varepsilon }{\partial t}
                     =Az^\varepsilon + B^\varepsilon z^\varepsilon + Lu,\\
                     z^\varepsilon(0)=0, 
                    \end{cases}
                    \label{Eq: Abstract_FDE_control z^epsilon}
                \end{align}
            with $z^\varepsilon=(z_1^\varepsilon,z_2^\varepsilon)$, where
                \begin{align}
                     B^\varepsilon = 
                         \begin{pmatrix}
                            -M_1^\varepsilon -a_1 -\chi_\omega u_1^\varepsilon     &-M_2^\varepsilon\\
                            M_1^\varepsilon     &M_2^\varepsilon -a_2 - u_2^\varepsilon 
                         \end{pmatrix}
                    \qquad \text{and}\qquad
                    L=
                        \begin{pmatrix}
                            - \chi_\omega y_1^*  &0\\
                            0                     &- y_2^*  
                         \end{pmatrix}.
                \end{align}
            If $(\mathcal{S}(t))_{t\ge0}$ is the semigroup of contraction generated by $A$, then the solution of system \eqref{Eq: Abstract_FDE_control z^epsilon} can be written as
                \begin{align*}
                    z^{\varepsilon}(t)
                    =\int_0^t \mathcal{S}(t-s)\ B^{\varepsilon}(s) \ z^\epsilon(s)\ ds
                    +\int_0^t \mathcal{S}(t-s)\ L u(s)\ ds.
                \end{align*}
            We now show that $y_1^\varepsilon \rightarrow y_1^*$ as $\varepsilon\to 0$ in $L^2(Q_T)$.
        
                Notice that $B^{\varepsilon}$ is bounded uniformly with respect to $\varepsilon$, i.e.\
            $\left\|B^{\varepsilon}\right\|_{L^\infty(\Omega)}<C$ for some $C>0$; since $\mathcal{S}(t)$ is a
            contraction semigroup, $\|\mathcal{S}(t-s)\|\le 1$, and thus
                \begin{align*}
                    \|z_i^\varepsilon(t)\|_{L^{\infty}(\Omega)}
                    &\le \int_0^t \|\mathcal{S}(t-s)\|\ \|B^{\varepsilon}(s)\| \ \|z_i^\varepsilon(s)\|_{L^\infty(\Omega)} \ ds
                        +\int_0^t \|\mathcal{S}(t-s)\|\ \|L\|_{L^{\infty}(\Omega)}\ \|u(s)\|\ ds\\
                    &\le C\int_0^t \|z_i^\varepsilon(s)\|_{L^\infty(\Omega)}\,ds
                        + C'\|u\|_{L^\infty(Q_T)}\,t,
                \end{align*}
            where $C'>0$ depends only on $\|L\|_{L^\infty(\Omega)}$. By Gr\"onwall's inequality, this yields
                \begin{align*}
                    \|z_i^\varepsilon(t)\|_{L^\infty(\Omega)} \le C'\|u\|_{L^\infty(Q_T)}\,t\,e^{Ct}
                    \le C'\|u\|_{L^\infty(Q_T)}\,T\,e^{CT} =: L_i,
                \end{align*}
            for some $L_i>0$, independent of $\varepsilon$, with $i=1,2$.
            Hence
                \begin{align*}
                \left\|z_i^{\varepsilon}\right\|_{L^{2}(Q_T)}^2
                    =\int_0^T \int_\Omega \left|z_i^\varepsilon \right|^2 dx\ ds 
                    \le L_i^2 \ T \ |\Omega|
                    =K_i^2.
                \end{align*}
            Taking $K=\max\left(K_1, K_2\right),$ we have 
                \begin{align*}
                    \left\|z_i ^{\varepsilon}\right\|_{L^{2}(Q_T)}<K<\infty .
                \end{align*}
            Therefore, we may write
                \begin{align*}
                    \|y_i^{\varepsilon}-y_i^*\|_{L^{2}(Q_T)}
                    =\varepsilon\ \|z_i^\varepsilon\|_{L^{2}(Q_T)}<\varepsilon K,    
                    \qquad i=1,2.
                \end{align*}
            Taking the limit as $\varepsilon \rightarrow 0$, we obtain
                \begin{align*}
                    y_i^{\varepsilon} \longrightarrow y_i^* 
                    \quad\text { in } \quad L^2(Q_T).
                \end{align*}
               In this case, we obtain (as $\varepsilon \to 0$, i.e., $y_i^\varepsilon \rightarrow y_i^*$)
                \begin{align*}
                    M_1^\varepsilon \longrightarrow M_1^*=\frac{\partial P}{\partial y_1}(y_1^*, y_2^*) 
                    \quad\text { and }\quad
                    M_2^{\varepsilon} \longrightarrow M_2^*=\frac{\partial P}{\partial y_2}\left(y_1^*, y_2^*\right) \text { in } L^2(Q_T).
                \end{align*}
            Thus, $B^{\varepsilon} \longrightarrow B^{*}$ element-wise in $L^2(Q_T)$ with 
                \begin{align}
                    B^*=
                        \begin{pmatrix}
                            -M_1^*-a_1-\chi_\omega u_1^* & -M_2^* \\
                            M_1^* & M_2^*-a_2-u_2^*
                        \end{pmatrix}.
                    \label{eq: matrix Bopt}
                \end{align}
            This means, as $\varepsilon \rightarrow 0$, the system \eqref{Eq: Abstract_FDE_control z^epsilon} goes to
            \begin{align*}
                \begin{cases}
                    \frac{\partial z^*}{\partial t}=A z^*+B^* z^*+L u,
                    \\
                    z^*(0)=0.
                \end{cases}
            \end{align*}
            Or, after renaming $z^* = z$ and $B^*=B$,
            \begin{align*}
                \begin{aligned}
                    & \frac{\partial z}{\partial t}=A z+B z+L u; \\
                    & z(0)=0.
                \end{aligned}
            \end{align*}
        \end{proof}

        Next, we give a characterization of the optimal control $u$.
        Let $ u^*\in \mathcal{U}$ be the optimal control associated with the state variable $ y^*\in \mathcal{Y}$. Let $p^*\in\mathcal{P}$ be the corresponding adjoint variable. First, we shall consider the Lagrangian of our problem (in the matrix form). Recall that 
            \begin{align*}
                \|y\|_{D_A(\Omega)}^2 := \|y_1\|_{L^2(\Omega)}^2 + \|y_2\|_{L^2(\Omega)}^2.
            \end{align*}
        
            Define
            $
                D=
                \begin{bmatrix}
                    0   &0\\
                    0   &1
                \end{bmatrix};
            $
            so that $\|y_2\|_{L^2(Q_T)}^2 = \int_0^T \langle Dy, Dy \rangle_{D_A(\Omega)}\, dt$
        
            Hence,
                \begin{align*}
                    J(y, u)&=\frac{\rho}{2} \int_0^T \left\langle D y, D y\right\rangle_{D_A(\Omega)} d t
                    +\int_0^T \langle  \tilde{u},  \tilde{u}\rangle_{D_A(\Omega)} d t,  
                    \intertext{ where } 
                    \tilde{u}&=
                    \begin{bmatrix}
                        \sqrt{\sigma_1/2}\  \chi_\omega(x)\ u_1 \\
                        \sqrt{\sigma_{2}/2} \ u_2
                    \end{bmatrix}.
                \end{align*}
        Then, the Lagrangian is given by
            \begin{align*}
                \mathcal{L} (y, u, p)
                &= J(y,u) + \left\langle e(y),p \right\rangle_{D_A(\Omega)}
                \\
                &= \frac{\rho}{2} \int_0^T \|Dy\|_{D_A(\Omega)}^2 dt  
                + \int_0^T \|\tilde{u}\|_{D_A(\Omega)}^2 dt
                \\
                &\quad
                +\int_0^T \int_\Omega p_{11}(x,t) 
                \left(
                \frac{\partial y_1}{\partial t} + d_1(-\Delta_N)^\sigma y_1 -F_1(y_1,y_2,u_1,u_2)
                \right)\ dx dt
                \\
                &\quad +\int_0^T \int_\Omega p_{12}(x,t) \left(
                \frac{\partial y_2}{\partial t} + d_2(-\Delta_N)^\sigma y_2 -F_2(y_1,y_2,u_1,u_2)
                \right)\ dx dt
                \\
                &\quad 
                + \int_\Omega y_1(0) p_{21}(x) dx + \int_\Omega y_2(0) p_{22}(x) dx.
            \end{align*}
        Let $(y^*,  u^*, p^*)\in \mathcal{Y}\times\mathcal{U}\times\mathcal{P}$ be the optimal value; then it must satisfy 
            \begin{align}
                \mathcal{L}_p (y^*,  u^*, p^*)=0 
                \text{ in } \mathcal{P}',\quad
                \mathcal{L}_y (y^*,  u^*, p^*)=0 
                \text{ in } \mathcal{Y}',\quad
                \mathcal{L}_u (y^*,  u^*, p^*)=0 
                \text{ in } \mathcal{U}',
            \end{align}
        where $\mathcal{P}',\mathcal{Y}',$ and $ \mathcal{U}'$ are the dual spaces of $\mathcal{P},\mathcal{Y},$ and $\mathcal{U}$, respectively.

Consider the adjoint/dual equation $\mathcal{L}_y (y^*,  u^*, p^*)=0 $ in $\mathcal{Y}'$, that is, 
    \begin{align*}
        \langle \mathcal{L}_y (y^*,  u^*, p^*) ,y\rangle = 0, 
        \qquad\text{for all } y\in\mathcal{Y}.
    \end{align*}
Thus, since $\left. \frac{\partial F}{\partial y}\right|_{(y^*,u^*,p^*)}= B$, then 
    \begin{align*}
       0 &=\langle \mathcal{L}_y (y^*,  u^*, p^*) ,y\rangle_{\mathcal{Y}',\mathcal{Y}} 
        &= \rho \int_0^T \left\langle D^* Dy^*,y \right\rangle dt
       + \int_0^T \left\langle -\frac{\partial p}{\partial t}- Ap -B^*p ,y \right\rangle dt
    \end{align*}
provided that $p(T)=0$. Hence, the dual/adjoint equation is given by
    \begin{align}
        \begin{cases}
            -\frac{\partial p}{\partial t}-Ap-B^* p = \rho\ D^* D y^*, \\
            p(T, x)=0,
         \end{cases}
         \label{Eq: Abstract_FDE_control_dual/adjoint}
    \end{align}
where $D^*$ and $B^*$ denote the adjoint of $D$ and $B$, respectively. 

Then, we have the following lemma.
    \begin{lemma}
        Under the hypotheses of Theorem \ref{thm: exist opt sol}, if $(y^*, u^*)$ is an optimal pair, then there exists a unique strong solution $p \in W^{1,2}(0, T ; D_A(\Omega))$ of the system \eqref{Eq: Abstract_FDE_control_dual/adjoint} such that $p_i \in N(T, \Omega)$, for $i=1,2$.
    \end{lemma}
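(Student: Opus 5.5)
The plan is to reduce the adjoint problem to a forward linear Cauchy problem of the same type as \eqref{Eq: Abstract_FDE_control}. Reversing time via $q(t,x)=p(T-t,x)$ turns \eqref{Eq: Abstract_FDE_control_dual/adjoint} into
\begin{align*}
    \frac{\partial q}{\partial t} = Aq + \tilde{B}^*(t)\,q + \rho\,D^*D\,\tilde{y}^*(t), \qquad q(0)=0,
\end{align*}
where $\tilde{B}^*(t)=B^*(T-t)$ and $\tilde{y}^*(t)=y^*(T-t)$. Here $A$ is the same self-adjoint dissipative operator that generates the contraction semigroup $\mathcal{S}(t)$ on $D_A(\Omega)$. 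The initial datum $q(0)=0$ belongs trivially to $D(A)$. So it suffices to show that the right-hand side $G(t,q):=\tilde{B}^*(t)q+\rho D^*D\tilde{y}^*(t)$ satisfies the hypotheses of Lemma \ref{lemma : Local and Global Existence}.

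\textbf{Step 1: boundedness of the coefficients.} The entries of $B^*$ in \eqref{eq: matrix Bopt} are $M_1^*=y_2^*\psi(y_2^*)$ and $M_2^*=y_1^*\big(\psi(y_2^*)+y_2^*\psi'(y_2^*)\big)$ up to sign, together with $a_1,a_2,\chi_\omega u_1^*,u_2^*$. By Theorem \ref{Thm: globalexistence}, $y_i^*\in L^\infty(Q_T)$. Since $\psi\in C^1(\mathbb{R}^+)$, $\psi$ and $\psi'$ are bounded on the compact range of $y_2^*$. Together with $a_i\in L^\infty(\Omega)$ and $u^*\in U_{ad}$, this shows that every entry of $B^*$, and hence of its transpose, lies in $L^\infty(Q_T)$. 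Consequently $q\mapsto \tilde{B}^*(t)q$ is a bounded linear operator on $D_A(\Omega)$, with norm at most $C=\|B^*\|_{L^\infty(Q_T)}$ uniformly in $t$. The map $G$ is therefore measurable in $t$ and globally Lipschitz in $q$ with constant $C$. The source term $\rho D^*D\tilde{y}^*=(0,\rho\tilde{y}_2^*)$ lies in $L^\infty(Q_T)\subset L^2(0,T;D_A(\Omega))$.

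\textbf{Step 2: existence and regularity.} Lemma \ref{lemma : Local and Global Existence}(i) gives a unique mild solution $q\in C([0,T];D_A(\Omega))$. Because the Lipschitz bound is global and linear, the solution does not blow up and exists on all of $[0,T]$; Gronwall applied to the Duhamel formula gives $\|q(t)\|\le \rho T\|y_2^*\|_{L^\infty}e^{CT}$. Part (ii) of the lemma, applied with $A$ self-adjoint and dissipative and $q(0)\in D(A)$, upgrades $q$ to a strong solution
\[
q\in W^{1,2}(0,T;D_A(\Omega))\cap L^2(0,T;D(A)).
\]
Uniqueness follows from linearity: the difference of two solutions solves the homogeneous problem with zero data, and Gronwall forces it to vanish.

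\textbf{Step 3: the bound $q_i\in N(T,\Omega)$.} I would repeat the energy computation of Theorem \ref{Thm: globalexistence}, taking the $L^2$ product of each component equation with $\partial_t q_i$ and with $d_i(-\Delta_N)^\sigma q_i$. This yields, for all $t\in[0,T]$,
\[
\|\partial_t q_i\|^2_{L^2(Q_t)} + d_i^2\|(-\Delta_N)^\sigma q_i\|^2_{L^2(Q_t)} + d_i\|(-\Delta_N)^{\sigma/2}q_i(t)\|^2_{L^2(\Omega)} \le \|G_i\|^2_{L^2(Q_T)},
\]
where the initial term vanishes because $q(0)=0$. The right-hand side is finite by the $L^2$ bound from Step 2 and the $L^\infty$ bounds from Step 1. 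Taking the supremum over $t$ gives $q_i\in L^\infty(0,T;H^\sigma(\Omega))\cap L^2(0,T;D((-\Delta_N)^\sigma))=N(T,\Omega)$. Reversing time back to $p$ preserves all of these spaces.

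The main obstacle is Step 1. Since $B^*$ comes from $\partial P/\partial y$ and contains $\psi'(y_2^*)$, its boundedness requires the $L^\infty(Q_T)$ bound on $y^*$ together with $\psi\in C^1$. Once this is in place, the remaining steps are the standard linear semigroup argument.
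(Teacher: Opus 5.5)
Your proposal is correct and follows the same route as the paper. The paper also reverses time via $q(s)=p(T-s)$, which turns \eqref{Eq: Abstract_FDE_control_dual/adjoint} into a forward problem of the form \eqref{Eq: Abstract_FDE_control} with $q(0)=0$, and then repeats the argument of Theorem \ref{Thm: globalexistence}. Your Step 1 (the entries of $B^*$ lie in $L^\infty(Q_T)$ because $y^*\in L^\infty(Q_T)$ and $\psi\in C^1$), the uniqueness argument by linearity, and the energy estimate with vanishing initial term only make explicit what the paper leaves implicit.
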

    \begin{proof}
        Let 
        $$s=T-t
        \quad \Rightarrow \quad
        p_i(t=T, x)=q_i(s=0, x).
        $$
        More precisely, let $q_i(s, x)=p_i(T-t, x)$. Then we have
            \begin{align*}
                q_i(0, x)=p_i(T, x)=0 \quad \text { for } \quad i= 1,2,
            \end{align*}
        with this change of variables, we have
            \begin{align*}
                \frac{\partial p}{\partial t}
                =\frac{\partial q}{\partial s} \cdot \frac{\partial s}{\partial t}
                =\frac{\partial q}{\partial s} \cdot \frac{\partial (T-t)}{\partial t}
                =-\frac{\partial q}{\partial s};
            \end{align*}
        hence, \eqref{Eq: Abstract_FDE_control_dual} becomes
           \begin{align}
                \begin{cases}
                    \frac{\partial q}{\partial s}-Aq-B^* q = \rho\ D^* D y^* ,\\
                    q(0, x)=0.
                 \end{cases}
             \label{Eq: Abstract_FDE_control_dual}
            \end{align}
        We see that this system is similar to \eqref{Eq: Abstract_FDE_control}. By repeating the argument of Theorem \eqref{Thm: globalexistence}, we prove the existence.
    \end{proof}

    Next, we give a characterization of the optimal control $ u^*_1,  u^*_2$.

    \begin{theorem}
        Let $ u^*$ be an optimal control of \eqref{Eq : FDE_control_system1}-\eqref{Eq: Objective_Functional} and let $ y^* \in W^{1, 2}\left([0, T] ; D_A(\Omega)\right)$ with $ y^*_i \in N(T, \Omega)$ for $i=1,2$ be the optimal state, that is, $ y^*$ is the solution of \eqref{eq : F1 F2 RHS}-\eqref{Eq: Abstract_FDE_control} with control $ u^*$, then,
        \begin{align}
            \begin{cases}
            u^*_1 &=\min \left(U_1, \max \left(0, \frac{\chi_\omega y_1^* p_1}{\sigma_1}\right)\right),
            \\
             u^*_2 &=\min \left(U_2, \max \left(0,  \frac{ y_2^* p_2}{\sigma_2}\right)\right).
            \end{cases}
        \label{eq: u* (optimal)}
        \end{align}
    \end{theorem}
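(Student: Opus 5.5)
The plan is the standard variational-inequality argument, combining the Gateaux differentiability of the control-to-state map with the adjoint system \eqref{Eq: Abstract_FDE_control_dual/adjoint}.

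\textbf{Step 1 (variational inequality).} Since $u^*$ minimizes $J$ over the convex set $U_{ad}$, take an admissible variation $u^\varepsilon=u^*+\varepsilon(v-u^*)$ with $v\in U_{ad}$ and $\varepsilon\in(0,1)$, so that $u^\varepsilon\in U_{ad}$. The inequality $J(y^\varepsilon,u^\varepsilon)\ge J(y^*,u^*)$, divided by $\varepsilon$ with $\varepsilon\to0$, gives
\begin{align*}
0\le \rho\int_{Q_T} y_2^*\, z_2\,dx\,dt+\sigma_1\int_{Q_\omega}u_1^*(v_1-u_1^*)\,dx\,dt+\sigma_2\int_{Q_T}u_2^*(v_2-u_2^*)\,dx\,dt,
\end{align*}
where $z=y'(u^*)(v-u^*)$ solves $z_t=Az+Bz+L(v-u^*)$ with $z(0)=0$, by the Gateaux differentiability theorem above. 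The passage to the limit in the quadratic term $\|y_2^\varepsilon\|^2$ uses $z^\varepsilon\to z$ in $L^2(Q_T)$. This follows from the uniform $L^\infty$ bound on $z^\varepsilon$ and $B^\varepsilon\to B$ established in that proof, together with a Gr\"onwall estimate on $z^\varepsilon-z$.

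\textbf{Step 2 (eliminating $z$ via the adjoint).} Use the adjoint state $p$ from the preceding lemma, which solves $-p_t-Ap-B^*p=\rho D^*Dy^*$ with $p(T)=0$. Since $D^*Dy^*=(0,y_2^*)$, we have $\rho\int y_2^*z_2=\int_0^T\langle -p_t-Ap-B^*p,z\rangle\,dt$. Integrate by parts in time, using $z(0)=0$ and $p(T)=0$, and in space, using the symmetry \eqref{eq: int by parts lap} of $A$ and the fact that $B^*$ is the pointwise transpose. This turns the expression into $\int_0^T\langle p,z_t-Az-Bz\rangle\,dt=\int_0^T\langle p,L(v-u^*)\rangle\,dt$. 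Both $p$ and $z$ lie in $W^{1,2}(0,T;D_A(\Omega))\cap L^2(0,T;D(A))$, which justifies these manipulations. Inserting the expression for $L$, the inequality becomes
\begin{align*}
0\le\int_{Q_\omega}\bigl(\sigma_1u_1^*-y_1^*p_1\bigr)(v_1-u_1^*)+\int_{Q_T}\bigl(\sigma_2u_2^*-y_2^*p_2\bigr)(v_2-u_2^*)
\end{align*}
for all $v\in U_{ad}$.

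\textbf{Step 3 (pointwise characterization).} Choose $v_2=u_2^*$ and let $v_1$ vary on arbitrary measurable subsets. Then proceed in the reverse order for $u_2$. This yields the pointwise conditions a.e.:
\begin{enumerate}
\item where $0<u_1^*<U_1$, one has $\sigma_1u_1^*=\chi_\omega y_1^*p_1$;
\item where $u_1^*=0$, one has $\chi_\omega y_1^*p_1\le0$;
\item where $u_1^*=U_1$, one has $\chi_\omega y_1^*p_1\ge\sigma_1U_1$.
\end{enumerate}
These three cases combine exactly into $u_1^*=\min(U_1,\max(0,\chi_\omega y_1^*p_1/\sigma_1))$, which is the projection onto $[0,U_1]$. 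The argument for $u_2^*$ is identical.

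\textbf{Main obstacle.} The main difficulty is the sign bookkeeping in Step 2. With $L=\mathrm{diag}(-\chi_\omega y_1^*,-y_2^*)$, the adjoint pairing produces $-\int p\cdot(\text{diag}(y^*))(v-u^*)$. The stated formula, with a plus sign $y_i^*p_i/\sigma_i$, then follows only if the adjoint sign convention is consistent with the Lagrangian. I would first recheck that the transposition of $B$ and the sign of $L$ match the form \eqref{eq: u* (optimal)}. If a sign flip appears, I would absorb it by redefining $p\mapsto -p$, as the Lagrangian with $+\langle e(y),p\rangle$ suggests. A secondary technical point is justifying $z^\varepsilon\to z$ strongly, which I would handle by the Gr\"onwall argument mentioned in Step 1.
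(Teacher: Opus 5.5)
Your proposal is correct and follows the same route as the paper: you take the Gateaux derivative of $J$ at $u^*$, eliminate $z$ through the adjoint system by integrating by parts in time and using the symmetry of $A$, and read off the projection from the variational inequality over $U_{ad}$. Two of your choices are sounder than the paper's: the admissible perturbation $u^*+\varepsilon(v-u^*)$, and the pointwise three-case analysis, where the paper instead asserts that the inequality forces $\bar u^*=-Lp$. Your sign worry is unfounded: with $L=\mathrm{diag}(-\chi_\omega y_1^*,-y_2^*)$ and $p$ as given by the adjoint lemma, your Step 2 inequality already yields the stated plus sign, so you should not redefine $p\mapsto -p$.
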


    \begin{proof}
        Let $ u^*\in\mathcal{U}$ be the optimal control and $ y^*=\left(y_1^*, y_2^*\right)=\left(y_1, y_2\right)( u^*)=y\left( u^*\right)\in\mathcal{Y}$ be the corresponding solution of \eqref{eq : F1 F2 RHS}-\eqref{Eq: Abstract_FDE_control}. We first show that $J$ is Gateaux differentiable at $ u^*\in\mathcal{U}$, that is, 
            \begin{align*}
            J^{\prime}\left( u^*\right) h
            =\lim _{\varepsilon \rightarrow 0} \frac{1}{\varepsilon}\left( J\left( u^* +\varepsilon h\right)-J\left( u^*\right) \right).
        \end{align*}
        
        Let $y^{\varepsilon}=\left(y_1^{\varepsilon}, y_2^{\varepsilon}\right)=\left(y_1, y_2\right)\left(u^{\varepsilon}\right)=y\left(u^{\varepsilon}\right)$ be the solutions of $(1.1)-(1.3)$ that correspond to 
        $u^{\varepsilon} =  u^* + \varepsilon h \in U_{ad}$ and $h=(h_1,h_2) \in\left(L^2(Q_T)\right)^2$. 
        Notice that, $u^\varepsilon- u^*=\varepsilon h$. Then, 
            \begin{align*}
                J^{\prime}\left( u^*\right) h
                &=\lim _{\varepsilon \rightarrow 0} \frac{1}{\varepsilon}
                \left(
                \frac{\rho}{2}\left(\|y_2^\varepsilon\|^2_{L^2(Q_T)}- \|y_2^*\|^2_{L^2(Q_T)}\right)
                    + \frac{\sigma_1}{2} \left(\|u_1^\varepsilon\|^2_{L^2(Q_\omega)}-\|u_1^*\|^2_{L^2(Q_\omega)}\right)
                    \right.
                    \\
                    &\qquad\qquad\left.
                    + \frac{\sigma_2}{2} (\|u_2^\varepsilon\|^2_{L^2(Q_T)}-\|u_2^*\|^2_{L^2(Q_T)})
                    \right).
            \end{align*}
        Observe that 
            \begin{align*}
                \lim _{\varepsilon \rightarrow 0} \frac{\|y_2^\varepsilon\|_{L^2(\Omega)}^2- \|y_2^*\|_{L^2(\Omega)}^2}{\varepsilon}
                = \langle 2 y_2'( u^*),y_2^*  \rangle_{L^2(\Omega)},
                \qquad 
                \left. \frac{\partial}{\partial u_2}\|u_2\|_{L^2(\Omega)}^2\right|_{u=u_2^*} 
                = 2 \langle u_2^*,h\rangle_{L^2(\Omega)}.
            \end{align*}
        and 
            \begin{align}
                \left. \frac{\partial}{\partial u_1}\|\chi_\omega(x)u_1\|_{L^2(\Omega)}^2\right|_{u=u_1^*} 
                = 2 \langle \chi_\omega(x)^*\chi_\omega(x)u_1^*,h\rangle_{L^2(\Omega)},
            \end{align}
        Then,
            \begin{align*}
                J^{\prime}\left( u^*\right) h
                &=\rho \int_0^T \int_\Omega \ y_2^{\prime}( u^*)  y_2^*
                + \sigma_1\chi_\omega(x)^*\chi_\omega(x)u_1^*h + \sigma_2 u_2^*h\ d x dt.
            \end{align*}
        By letting 
        $  \Bar{u}^* = 
            \begin{bmatrix}
                \sigma_1 \chi_\omega(x)^*\chi_\omega(x) u_1^*\\
                \sigma_2 u_2^*
            \end{bmatrix},
        $
        we have 
            \begin{align*}    
                J^{\prime}\left( u^*\right) h
                & =\int_0^T
                \left\langle \rho D  y^*, D y'( u^*)  \right\rangle_{(L^2(\Omega))^2} d t
                +\int_0^T\left\langle  \bar{u}^*, h \right\rangle_{(L^2(\Omega))^2} \ d t 
                \\
                 & =\int_0^T
                \left\langle \rho D^*D  y^*, y'( u^*) \right\rangle_{(L^2(\Omega))^2} d t
                +\int_0^T\left\langle  \bar{u}^*, h \right\rangle_{(L^2(\Omega))^2} \ d t .
            \end{align*}
        Using the adjoint equation \eqref{Eq: Abstract_FDE_control_dual/adjoint}, we get
            \begin{align*}
             J^{\prime}\left( u^*\right) h
                & =\int_0^T
                \left\langle -\frac{\partial p}{\partial t}-Ap-B^* p, z \right\rangle_{(L^2(\Omega))^2} d t
                +\int_0^T\left\langle  \bar{u}^*, h \right\rangle_{(L^2(\Omega))^2} \ d t 
                \\
                & =z(0) p(0) + \int_0^T \left\langle z_t,  p \right\rangle_{(L^2(\Omega))^2} d t
                +\int_0^T \left\langle -A z -B z,  p \right\rangle_{(L^2(\Omega))^2} d t
                \\
                &\quad
                +\int_0^T \left\langle  \bar{u}^*, h \right\rangle_{(L^2(\Omega))^2} \ d t
                \\
                &=\int_0^T \left\langle z_t -A z -B z,  p \right\rangle_{(L^2(\Omega))^2} d t
                +\int_0^T \left\langle  \bar{u}^*, h \right\rangle_{(L^2(\Omega))^2} \ d t
                \\
                &=\int_0^T \left\langle Lp+ \bar{u}^*,  h \right\rangle_{(L^2(\Omega))^2} d t.
            \end{align*}
        
        Since $J$ is Gateaux differentiable at $u^*$ and $U_{ad}$ is convex, taking $h=v-u^*$ in the variational inequality $J'(u^*)(v-u^*) \ge 0$ for all $v\in U_{ad}$ implies $\bar u^*=-L p,$ or
            \begin{align*}
                u_1^* &= \frac{\chi_\omega y_1^* p_1}{\sigma_1}
                \quad\text{and}\quad
                u_2^* = \frac{y_2^* p_2}{\sigma_2}.
            \end{align*}
        Since $(u_1^*,u_2^*)\in U_{ad}$, we must have $0\le u_1^* \le U_1$ and $0\le u_2^*\le U_2$. Hence, the assertion follows.
    \end{proof}

\section{Numerical simulation}\label{NumerSimu}

\subsection{\textbf{Model parameters and regions}}

In this section, we give numerical simulations of the spread of contagious diseases, that is related to the different regions (subdomains) of choice where the supposedly vaccination strategy is implemented. We shall compare the disease propagation in some bounded domains, with and without control placed. To illustrate the numerical results, we use COVID-19 data from South Dakota, as considered by Forrest and Al-arydah \cite{Omar}. 
For implementations purposes, we rescale the governing equations by considering the proportion of each compartment with respect to the total population $N_{\mathrm{pop}}$. By writing $s = S/N_{pop}$ and $i = I/N_{pop}$, and substituting into the original system \eqref{Eq : FDE_control_system1}--\eqref{Eq : FDE_control_system2},
the governing equations of the primal problem becomes

\begin{equation}
    \begin{split}
        \partial_t s &= -d_1(-\Delta)^\sigma s - \beta\, s\, i - a_1 s + \tilde b - \chi_{\omega} u_1 s,\\
        \partial_t i &= -d_2(-\Delta)^\sigma i + \beta\, s\, i - a_2 i - u_2 i,
    \label{eq: primal_num}
    \end{split}
\end{equation}
where we have chosen $\psi(I)=\beta$ to be a (constant) transmission rate, and $\tilde b = \Pi/N_{pop}$, with $\Pi$ a constant recruitment/birth rate, whose values follow \cite{Omar}.

\subsubsection*{\textbf{Parameters}}
The diffusion coefficients $d_1=0.08$ and $d_2=0.02$ are chosen for the numerical experiment and are not estimated from a specific dataset. The choice $d_1>d_2$ is used to reflect the assumption that susceptible individuals move more freely, while infected individuals may have reduced mobility due to illness, self-isolation, or hospitalization.

In \cite{Omar}, as in ours, $a_1$ denotes the natural death rate, not the death rate caused by COVID-19. On the other hand, $a_2$ represents the total removal rate from the infected compartment, which consists of the recovery rate, the disease-induced death rate, and the natural death rate. We use the natural death rate and the recovery rate from \cite{Omar}. However, for the disease-induced death rate, we do not use the value assumed in \cite{Omar}, since the authors stated that the value $5.005\%$ was assumed rather than estimated from data. Instead, we use the value reported in \cite{carcione2020}, which is based on COVID-19 data, namely $\rho_u=0.0014$ per day. Therefore, combining the natural death rate and recovery rate from \cite{Omar} with the disease-induced death rate from \cite{carcione2020}, the value $a_2=0.0514256$ per day is used in our simulations.

For the upper bound of the vaccination control $U_1$, we take the U.S. COVID-19 vaccination campaign as a reference. According to the CDC \cite{cdc2021weekly}, by February 11, 2021, the peak (7-day average) number of COVID-19 vaccine doses administered in the United States was about $1.6$ million doses per day.
Since the U.S. population was approximately $331$ million as of
2020--2021 \cite{census2021}, this gives a daily vaccination rate of
$1.6\times10^6/(331\times10^6) \approx 0.0048$, or about $0.48\%$ of the population per day. Based on this reference value, we choose $U_1=0.01$ (or $1\%$) as the upper bound for the vaccination control used in our simulation. 
This value is slightly higher than the observed peak to allow an aggressive but still reasonable daily vaccination capacity.

For the upper bound of the treatment control, we consider $U_2=0.03$. Although this gives a larger proportion bound than the vaccination control, this choice is deemed reasonable because treatment is applied only to the infected population. This population is typically much smaller than the susceptible population targeted by vaccination. 

The cost weights $\sigma_1$, $\sigma_2$, and $\rho$ in the objective functional are chosen based on the relative importance of the quantities they represent, rather than as a real monetary costs. In particular, the choice $\sigma_2>\sigma_1$ is motivated by the observation that treatment or hospitalization of the infected is more costly than vaccination. For example, an average COVID-19 vaccine dose costs around \$40 \cite{aspe2022}, while the average cost of a COVID-19 hospitalization is around \$20,000 \cite{kff2022}. This gives a very high vaccination versus treatment cost ratio.

In this paper, we take $\sigma_1=5$ and $\sigma_2=400$, to have $\sigma_2:\sigma_1=80:1$ ratio. The infection weight is chosen as $\rho=50$, which is larger than $\sigma_1$. This choice reflects the fact that the cost associated with having infected individuals, such as loss of productivity, additional pressure on the healthcare system, and the risk of further transmission, is much larger than the cost of vaccination alone. We also see that this way, reducing the infected population is given higher priority than minimizing the vaccination effort. 
The summary of the parameters and initial values are given in Table \ref{table_parameters}

\begin{table}[htbp!]
\centering
\caption{Initial conditions and parameter values}
    \begin{tabular}{c c l l}
        \hline
        \hline
        \textbf{Notation} & \textbf{Value} & \textbf{Description} & \textbf{Source} \\ 
        \hline
        \hline
            $N_{pop}$    & $864{,}822$            & Total population scale            & \cite{Omar} \\
            $s_{total}$       & $0.4984$                & Initial susceptible fraction       & \cite{Omar} \\
            $i_{total}$       & $4.4671\times10^{-4}$   & Initial infected fraction          & \cite{Omar} \\
            $\beta_0$    & $0.25$                  & Transmission rate (day$^{-1}$)     & \cite{Omar} \\
            $a_1$        & $2.555\times10^{-5}$    & Death rate, natural causes (day$^{-1}$) & \cite{Omar} \\
            $a_2$        & $0.0514256$             & Removal rate of infected (day$^{-1}$) & \cite{Omar}, \cite{carcione2020} \\
            $b$          & $4.25475\times10^{-5}$  & Recruitment rate (day$^{-1}$)      & \cite{Omar} \& rescaled \\
            $d_1$        & $0.08$                   & Diffusion coeff., susceptible      & Assumed \\
            $d_2$        & $0.02$                   & Diffusion coeff., infected         & Assumed \\
            $\sigma_1$   & $5$                     & Vaccination cost weight            & Assumed \\
            $\sigma_2$   & $400$                   & Treatment cost weight              & Assumed \\
            $\rho$       & $50$                    & Infection cost weight              & Assumed \\
            $U_1$        & $0.01$                  & Vaccination upper bound (\%/day)   & \cite{cdc2021weekly} \\
            $U_2$        & $0.03$                  & Treatment upper bound (\%/day)     & Assumed \\
            $T$          & $600$                   & Time horizon (days)                & Assumed \\
        \hline
        \hline
    \end{tabular}
    \label{table_parameters}
\end{table}

\subsubsection*{\textbf{Regions choice}}

We consider the spatial domain $\Omega=[0,1]\times[0,1]$, discretized on a uniform $N\times N=64\times 64$ grid. 
We consider two different geometries for the regional control to illustrate the disease progression. The first region is the circular geometry placed adjacent to the boundary, and the second one is the rectangular geometry, where the initial infected region is located at the centre of the domain (see Table \ref{table:regions} for the exact definitions and areas). These two geometries are used to compare between an outbreak concentrated near the boundary and an outbreak which is interior concentrated.
For both cases, the initial susceptible and infected populations are taken to be non-uniformly distributed to mimic the real world scenario. In this settings, $40\%$ of the total initial  susceptible fraction $s_{total}$ and $60\%$ of the total initial infected fraction $i_{total}$ are placed inside the corresponding region $\omega_i$ ($i=1,2$). The remaining $60\%$ of $s_{total}$ and $40\%$ of $i_{total}$ are distributed over the complement $\Omega\setminus \omega_i$ ($i=1,2$).

The vaccination control region $\chi_{\omega_i}(x)$ ($i=1,2$) is chosen to be the same geometry as the initial infected region plus a slight extension outward by a buffer (of six grid cells of the discretization). This means that the control is concentrated around the outbreak region, but is not restricted exactly to it. 

\begin{table}[htbp!]
    \centering
    \caption{Initial condition geometries, control regions, and population distributions}
        \begin{tabular}{l l c}
            \hline
            \hline
            \multicolumn{3}{c}{\textbf{Outbreak geometry $\omega$}} \\
            \textbf{Name} & \textbf{Region} & \textbf{Area $|\omega|$} \\
            \hline
            Circular-corner   & $\{(x,y)\in\Omega : x^2+(y-1)^2 \le (0.5)^2\}$            & $\approx 0.196$ \\
            Rectangular       & $\{(x,y)\in\Omega : |x-0.5|\le 0.2,\ |y-0.5|\le 0.2\}$      & $\approx 0.160$ \\
            Circular-middle   & $\{(x,y)\in\Omega : (x-0.5)^2+(y-0.5)^2 \le (0.3)^2\}$     & $\approx 0.283$ \\
            \hline
            \multicolumn{3}{c}{\textbf{Control region}} \\
            Circular-corner   & $\{(x,y)\in\Omega : x^2+(y-1)^2 \le (0.5+6dx)^2\}$            & $\approx 0.277$ \\
            Rectangular       & $\{(x,y)\in\Omega : |x-0.5|\le 0.2+6dx,\ |y-0.5|\le 0.2+6dx\}$ & $\approx 0.345$ \\
            Circular-middle   & $\{(x,y)\in\Omega : (x-0.5)^2+(y-0.5)^2 \le (0.3+6dx)^2\}$     & $\approx 0.487$ \\
            \hline
            \\
            \multicolumn{3}{c}{\textbf{Initial population distribution}} \\
            \textbf{Variable} & \textbf{Population distribution} & 
            \\
            \hline
            $s_0(x,y)$ & $40\%\, s_{total}$ (for $(x,y)\in\omega$) $+\ 60\%\, s_{total}$ (for $(x,y)\in\Omega\setminus\omega$) & 
            \\
            $i_0(x,y)$ & $60\%\, i_{total}$ (for $(x,y)\in\omega$) $+\ 40\%\, i_{total}$ (for $(x,y)\in\Omega\setminus\omega$) &\\
            \hline
            \hline
            \label{table:regions}
        \end{tabular}
    \end{table}
The percentages in the table indicate how the total initial fractions $s_{\mathrm{total}}$ and $i_{\mathrm{total}}$ are distributed between $\omega$ and $\Omega\setminus\omega$. To obtain the initial density functions $s_0(x,y)$ and $i_0(x,y)$, each portion is divided by the area of the region where it is placed, namely
\[
    s_0(x,y)
    =
    \begin{cases}
        \dfrac{0.40\,s_{\mathrm{total}}}{|\omega|},
        & (x,y)\in\omega,\\[1ex]
        \dfrac{0.60\,s_{\mathrm{total}}}{|\Omega\setminus\omega|},
        & (x,y)\in\Omega\setminus\omega,
    \end{cases}
\qquad
\text{and}
\qquad
    i_0(x,y)
    =
    \begin{cases}
        \dfrac{0.60\,i_{\mathrm{total}}}{|\omega|},
        & (x,y)\in\omega,\\[1ex]
        \dfrac{0.40\,i_{\mathrm{total}}}{|\Omega\setminus\omega|},
        & (x,y)\in\Omega\setminus\omega.
    \end{cases}
\]
This ensures that $\displaystyle\int_\Omega s_0(x,y)\,dxdy=s_{\mathrm{total}}$ and $\displaystyle\int_\Omega i_0(x,y)\,dxdy=i_{\mathrm{total}}$.

\subsection{{Numerical scheme}}
\subsubsection*{\textbf{Forward-backward scheme}}
To find the optimal control, we use a forward--backward iterative algorithm. At each iteration, we move forward in time by solving the (rescaled) primal problem \eqref{eq: primal_num}, then use the resulting state to move backward in time by solving the dual problem. Notice that our (rescaled) primal problem \eqref{eq: primal_num} can be written in the abstract form \eqref{eq : F1 F2 RHS}-\eqref{Eq: Abstract_FDE_control}, so that it corresponds to the similar adjoint equation \eqref{Eq: Abstract_FDE_control_dual/adjoint}. With our parameters choice, the corresponding dual equation reads
    \begin{align}
        \begin{cases}
            -\dfrac{\partial p_1}{\partial t} &= -d_1(-\Delta)^\sigma p_1 
                + \left(-i\beta_0 - a_1 - \chi u_1\right) p_1 + (i\beta_0)\, p_2, 
                \\[6pt]
            -\dfrac{\partial p_2}{\partial t} &= -d_2(-\Delta)^\sigma p_2 
                + \left(-s\beta_0\right) p_1 + \left(s\beta_0 - a_2 - u_2\right) p_2 
                + \rho i,
        \end{cases}
    \label{eq:dual_num}
    \end{align}
with terminal condition $p_1(x,y,T) = p_2(x,y,T) = 0$. 

The control is then updated using both the primal and dual solutions, and this forward--backward pass is repeated until the control barely changes between iterations (i.e., the change falls below a chosen tolerance).
From the necessary optimality condition, we seen that the solution of the optimal control problem is characterized by \eqref{eq: u* (optimal)}. The implementation of an optimal control strategy of the problem \eqref{Eq: Objective_Functional} related to the system \eqref{eq: primal_num} can be achieved by the following iterative scheme
    \begin{align}
        \begin{cases}
            u_1^{n} (x,y,t) &= \min\left(U_1,\ \max\left(0,\
                \dfrac{\chi_\omega(x,y)\, s^n(x,y,t)\, p_1^n(x,y,t)}{\sigma_1}\right)\right), \\[6pt]
            u_2^{n} (x,y,t) &= \min\left(U_2,\ \max\left(0,\
                \dfrac{i^n(x,y,t)\, p_2^n(x,y,t)}{\sigma_2}\right)\right),
        \end{cases}
        \label{eq:optimal_u}
    \end{align}
where $(s^n, i^n)$ is the solution of the primal system \eqref{eq: primal_num} associated with the control $u^n = (u_1^n, u_2^n)$ at the $n$-th iteration, and $(p_1^n, p_2^n)$ is the solution of the dual(adjoint) system \eqref{eq:dual_num} computed using $(s^n, i^n)$ and $u^n$.
After computing the candidate controls $u^n$ from \eqref{eq:optimal_u}, we apply a relaxation step. Following the forward--backward sweep procedure described in Lenhart and Workman \cite{LenhartWorkman2007}, we set  $u_j^{n+1} = \theta u_j^{n} + (1-\theta)u_j^{n-1}$ for $j=1,2$, where $\theta\in(0,1]$ is a relaxation parameter.
This step is a standard modification to the forward--backward sweep method, used to prevent oscillatory (non-convergent) behavior in the control updates between iterations \cite{LenhartWorkman2007}. In our case, we choose $\theta=0.25$. This forward--backward pass, including the relaxation step, is repeated until the control barely changes between successive iterations, i.e.,
    \begin{align}
        \max\left(\left\|u_1^{n+1} - u_1^{n}\right\|_\infty,\
                  \left\|u_2^{n+1} - u_2^{n}\right\|_\infty\right) < \text{tol}.
    \end{align}

\subsubsection*{\textbf{Solving fractional reaction-diffusion system via the Fourier Spectral Method}}
The algorithm to solve the problem is as given in the Algorithm \ref{Algorithm}. In solving both primal system \eqref{eq : FDE yn} and adjoint system \eqref{Eq: Abstract_FDE_control_dual/adjoint}, we adopt the Fourier spectral method numerical scheme given in \cite{BuenoOrovioKayBurrage}, which we specify here for the sake of clarity.

Consider the system
    \begin{align}
        \begin{cases}
            \dfrac{dv}{dt} = Av + G(t,v(t)), \qquad t\in[0,T], \\
            v(0) = v^0,
        \end{cases}
    \end{align}
where $v = (v_1,v_2)$. In our application, $v$ may represent either the state variable $(s,i)$ or the dual variable $(p_1,p_2)$, whereas $G$ is the corresponding right-hand side of the primal problem \eqref{eq: primal_num}, or the dual problem \eqref{eq:dual_num}, respectively.  We consider the time interval discretization $t_0 < t_1 < \cdots < t_n < \cdots < T$, with $t_n = t_0 + n\Delta t = t_{n-1}+\Delta t$, and write $v^n = v(x,y,t^n)$. Using backward Euler in time, we have 
    \begin{align}
        \frac{v^{n+1}-v^n}{\Delta t} = Av^{n+1} + G(t^{n+1},v^{n+1}).
    \end{align}
Since $G$ is, in general, nonlinear in $v$, we cannot directly apply the discrete Fourier transform.  Therefore, we adopt the fixed-point iteration as mentioned in \cite{BuenoOrovioKayBurrage} as follows: given $v^n$ at the $n-th$ timestep, we initialize $v^{n+1,0} := v^n$ and for $m = 1, \ldots, M$ we solve:
    \begin{align}
        \frac{v^{n+1,m}-v^n}{\Delta t} = Av^{n+1,m} + G(t^{n+1},v^{n+1,m-1}),
        \label{eq:scheme_backward_fixed}.
    \end{align}
Here, the nonlinear term $G$ is evaluated at the previous fixed-point iterate $v^{n+1,m-1}$, which is known. In our case, we choose $M=3$. Applying the discrete Fourier transform to \eqref{eq:scheme_backward_fixed}, we obtain
    \begin{align}
        \hat{v}_{ij}^{n+1,m} = \frac{1}{1+d_i\lambda_j^\sigma \Delta t}
        \left[\hat{v}_{ij}^{n} + \Delta t\,\hat{G}_{ij}\bigl(v^{n+1,m-1},t^{n+1}\bigr)\right],
        \qquad i=1,2,
    \end{align}
where $\hat{(\cdot)}$ denotes the discrete Fourier transform and $\hat{G}_{ij}$ is obtained by evaluating $G_i$ at the grid values of $v^{n+1,m-1}$ and applying the discrete Fourier transform. The solution at each time step is recovered by the inverse discrete Fourier transform.

\begin{algorithm}
\caption{An optimal control approach}
        \label{Algorithm}
            \begin{algorithmic}
                \State \textbf{Step 1:} Initialize system data
                        \State Set the initial state $s^0, i^0$.
                        \State Set the initial control $(u_1^0,u_2^0) = (0,0)$.
                        \State Set the control region $\chi_\omega$, parameters
                            $d_1,d_2,a_1,a_2,b,\beta_0,\sigma_1,\sigma_2,\rho$,
                            final time $T$, relaxation parameter $\theta$, and
                            tolerance $\mathrm{tol}$.
                \State \textbf{Step 2:}
                \While{$\max\left(\|u_1^{n+1}-u_1^{n}\|_\infty,\
                       \|u_2^{n+1}-u_2^{n}\|_\infty\right) > \mathrm{tol}$}
                        \State Obtain $s^n, i^n$ by solving the primal system
                            \eqref{eq: primal_num} using the Fourier spectral
                            method.
                        \State Obtain $p_1^n, p_2^n$ by solving the dual system
                            \eqref{eq:dual_num} using the Fourier spectral
                            method.
                        \State Compute the candidate control via
                            \eqref{eq:optimal_u} and do the relaxation step to update the control.
                \EndWhile
            \end{algorithmic}
        \end{algorithm}
        All numerical simulations were performed using MATLAB R2025a.
        
        \subsection{{Results and discussion}}
            In this subsection, we present the results obtained by applying the forward--backward algorithm in Algorithm \ref{Algorithm} for $\sigma \in \{1.00,\ 0.80,\ 0.55\}$. We focus on the circular-corner and rectangular geometries summarized in Table \ref{table_parameters}. 
            
            For each value of $\sigma$, we compare the uncontrolled system \eqref{eq: primal_num}, where $u_1=u_2=0$, with the fully controlled system. The comparison is made in terms of the infected compartment $\int_\Omega i(t,x)\,dx$, the susceptible compartment $\int_\Omega s(t,x)\,dx$, and the optimal control profiles $u_1^*(t)$ and $u_2^*(t)$. Our goal is to see how the fractional order affects the spread of the disease and, as a result, the optimal control strategy.

        \subsubsection*{\textbf{Controlled versus uncontrolled dynamics}}
            The first one that we noticed is that, as we can see from Figure \ref{fig:four comparison} the model with the controlled case has a significantly lower peak of the total infected proportion (represented by the $L^1$ norm $\|i\|_{L^1(\Omega)}$) than the model without control placed. We also see from Figure \ref{fig:four comparison} that, for both geometries, the peak of the infected proportion in the uncontrolled case reaches up to approximately $\|i\|_{L^1(\Omega)} \approx 0.11$, which translates into approximately $0.11 \times N_{\mathrm{pop}} \approx 95{,}400$ individuals in our real-world setting. In the controlled case, the growth of the disease is suppressed, and the peak is reduced to roughly $\|i\|_{L^1(\Omega)} \approx 0.0455$ for the circular-corner IC and $\|i\|_{L^1(\Omega)} \approx 0.0377$ for the rectangular IC, which translate into approximately $39{,}300$ and $32{,}600$ individuals, respectively, which is a reduction of about $56{,}000$ and $63{,}000$ infected individuals at the peak, that corresponds to roughly $59\%$ and $66\%$ reductions relative to the uncontrolled peak.

            We also notice that there is a crossover phenomenon in large time, approximately around $t^* \approx 170$ for the circular-corner IC and $t^* \approx 178$ for the rectangular IC, where the number of infected individuals in the controlled system becomes slightly larger than in the uncontrolled case. This phenomenon is not new; it has also been observed, for instance, in \cite{Ketcheson2021}, and is not hard to explain. In the uncontrolled case, the disease spreads much faster, so that during the early phase many more individuals become infected than in the controlled case; consequently, the susceptible population also decreases more sharply in the uncontrolled case, for both geometries, as seen on the left-hand side of Figure \ref{fig:four comparison}. After the peak, there are comparatively few susceptibles left to infect in the uncontrolled case, whereas in the controlled case a larger susceptible population remains available, allowing further infections to occur. As a result, at later times the infected population in the controlled case may exceed that of the uncontrolled case, although only by an insignificant amount, after the disease propagation has been contained. This reduced peak is nonetheless of great practical importance, as it allows healthcare resources to be allocated more effectively, which is always a critical concern, since many countries experienced a severe strain even collapses on their healthcare systems during the COVID-19 pandemic.
            Figure \ref{fig:control_profiles_per_sigma} shows the corresponding optimal control rate profiles $\langle u_1\rangle_x(t)$ (vaccination) and $\langle u_2\rangle_x(t)$ (treatment) that produce this peak reduction, for both geometries and all three values of $\sigma$.
            
            \begin{figure}
                \centering
                \includegraphics[width=.8\linewidth]{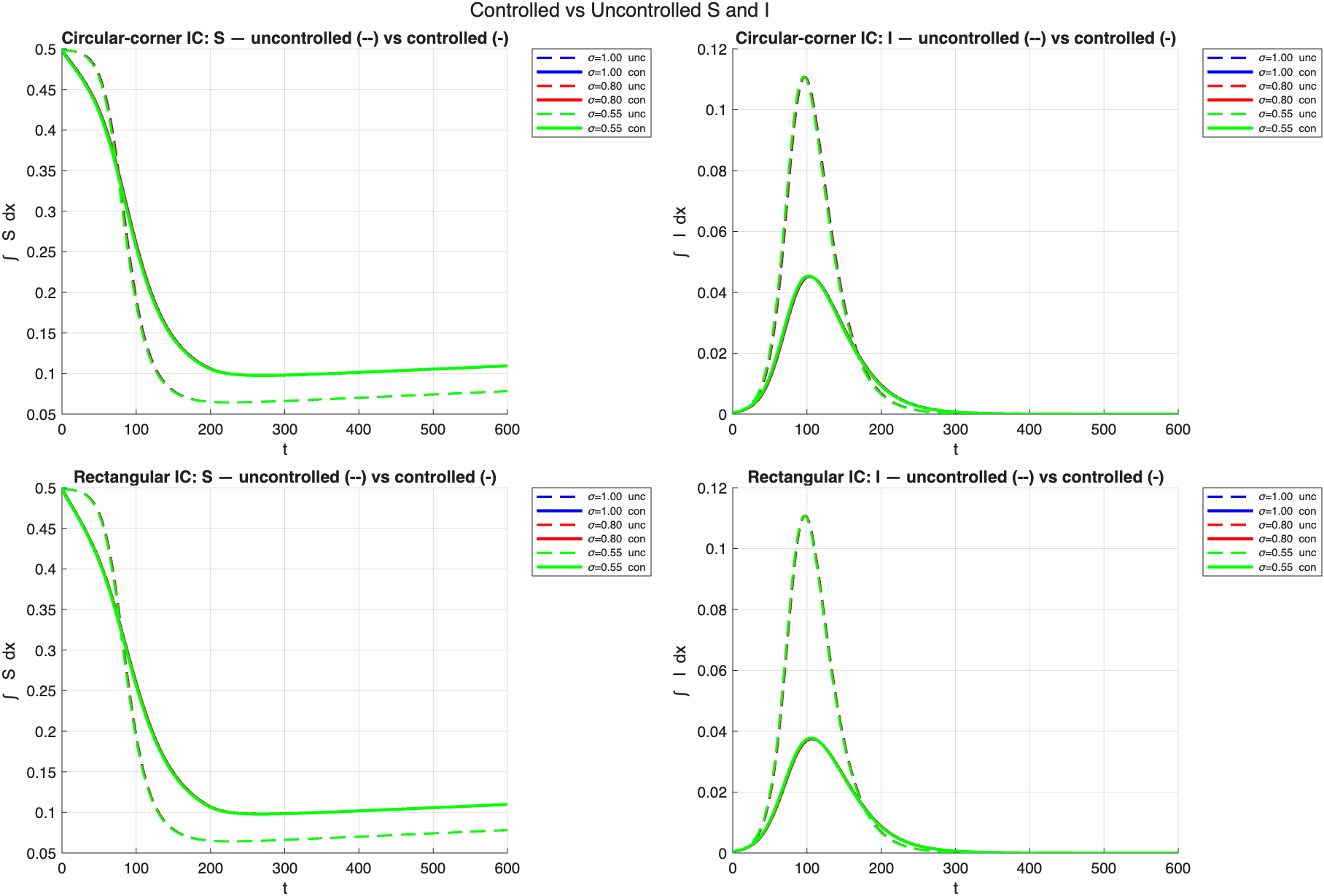}
                \caption{$L^1(\Omega)$ norms $\|s\|_{L^1(\Omega)}$ and
                    $\|i\|_{L^1(\Omega)}$, controlled vs.\ uncontrolled, for
                    $\sigma=1.00,\ 0.80,\ 0.55$: circular-corner (top) and rectangular
                    (bottom).}
                \label{fig:four comparison}
            \end{figure}
        
            \begin{figure}[htbp!]
                \centering
                \includegraphics[width=0.45\textwidth]{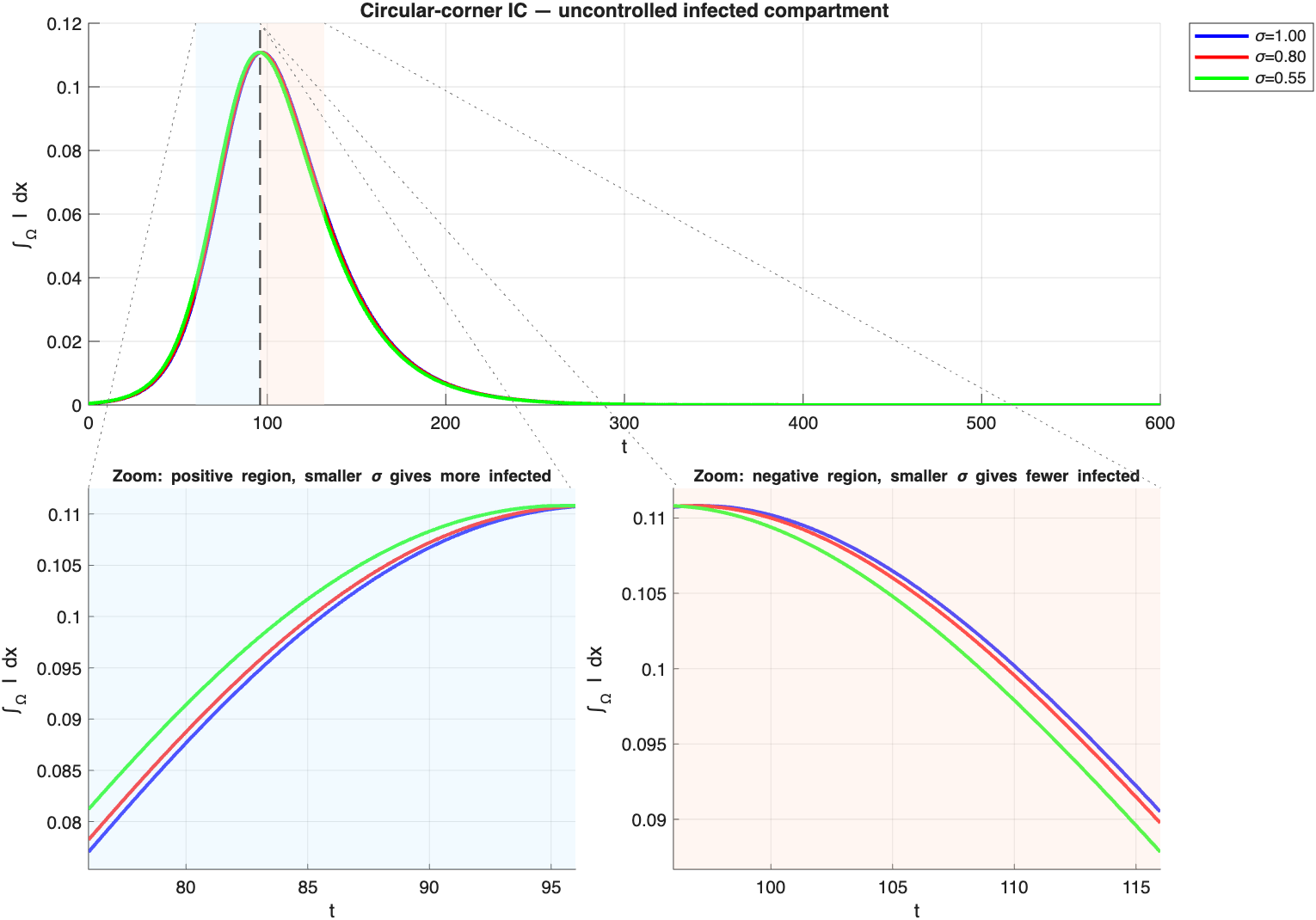}
                \includegraphics[width=0.45\textwidth]{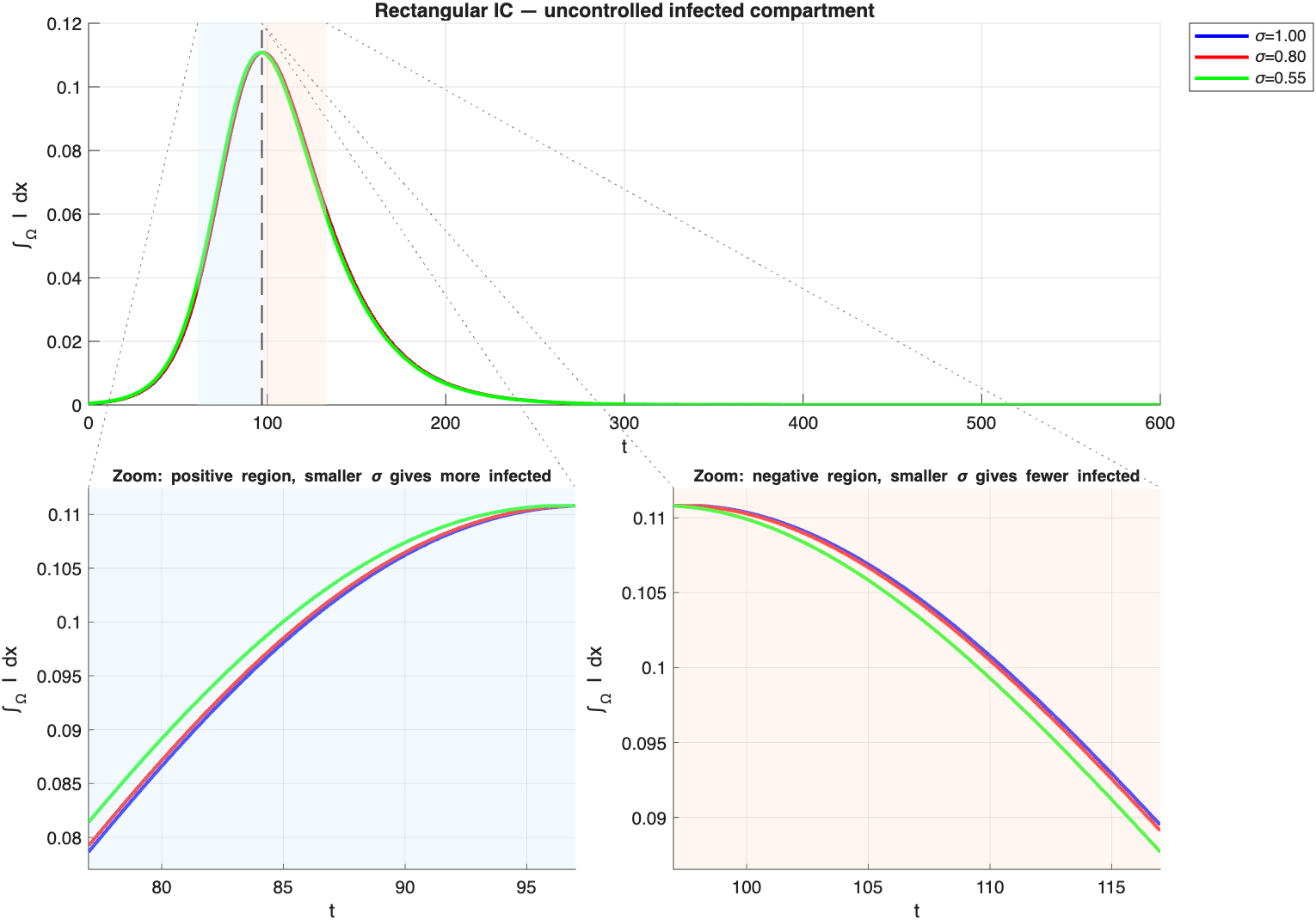}
                \\
                \vspace{0.4cm}
                \includegraphics[width=0.45\textwidth]{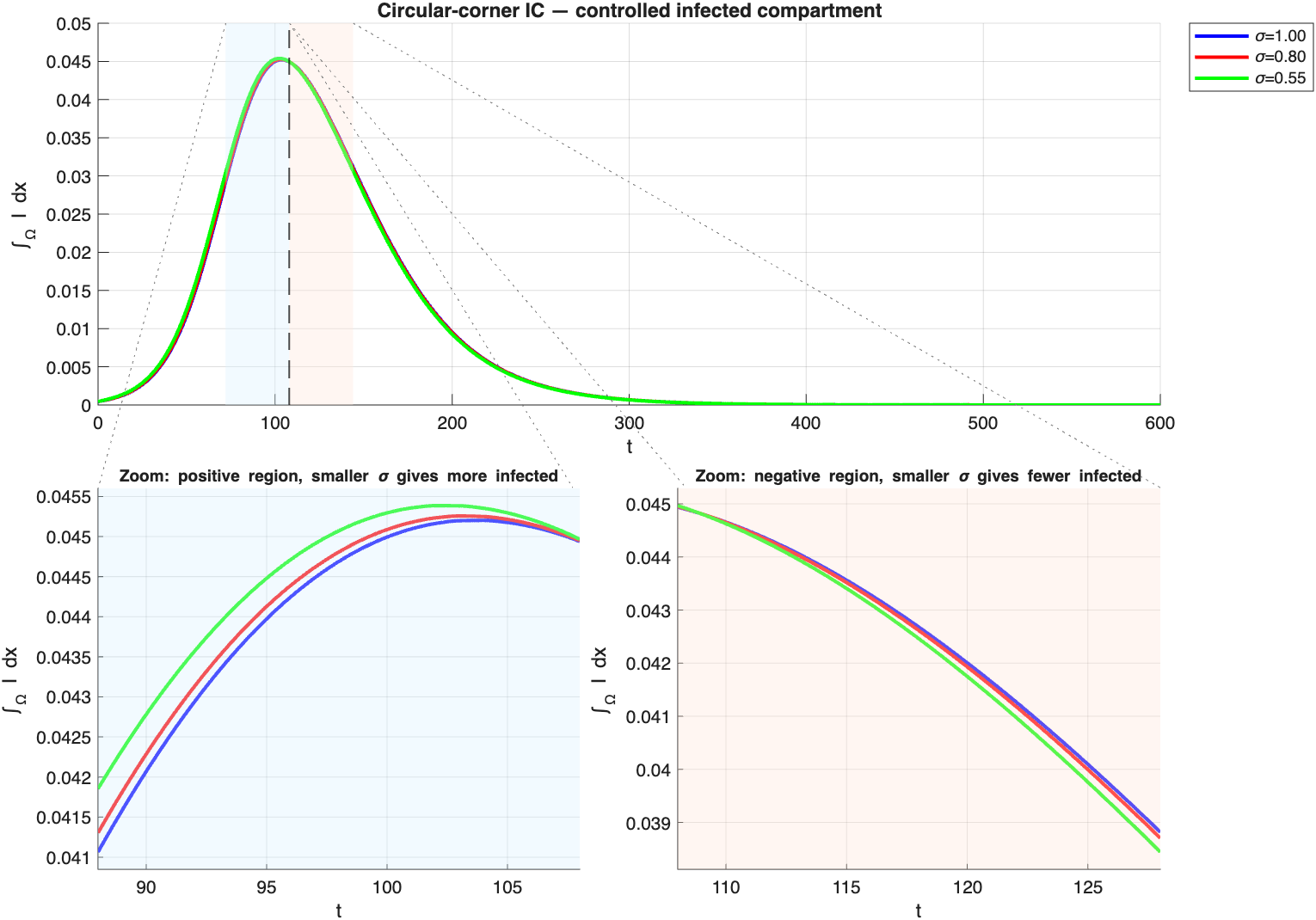}
                \includegraphics[width=0.45\textwidth]{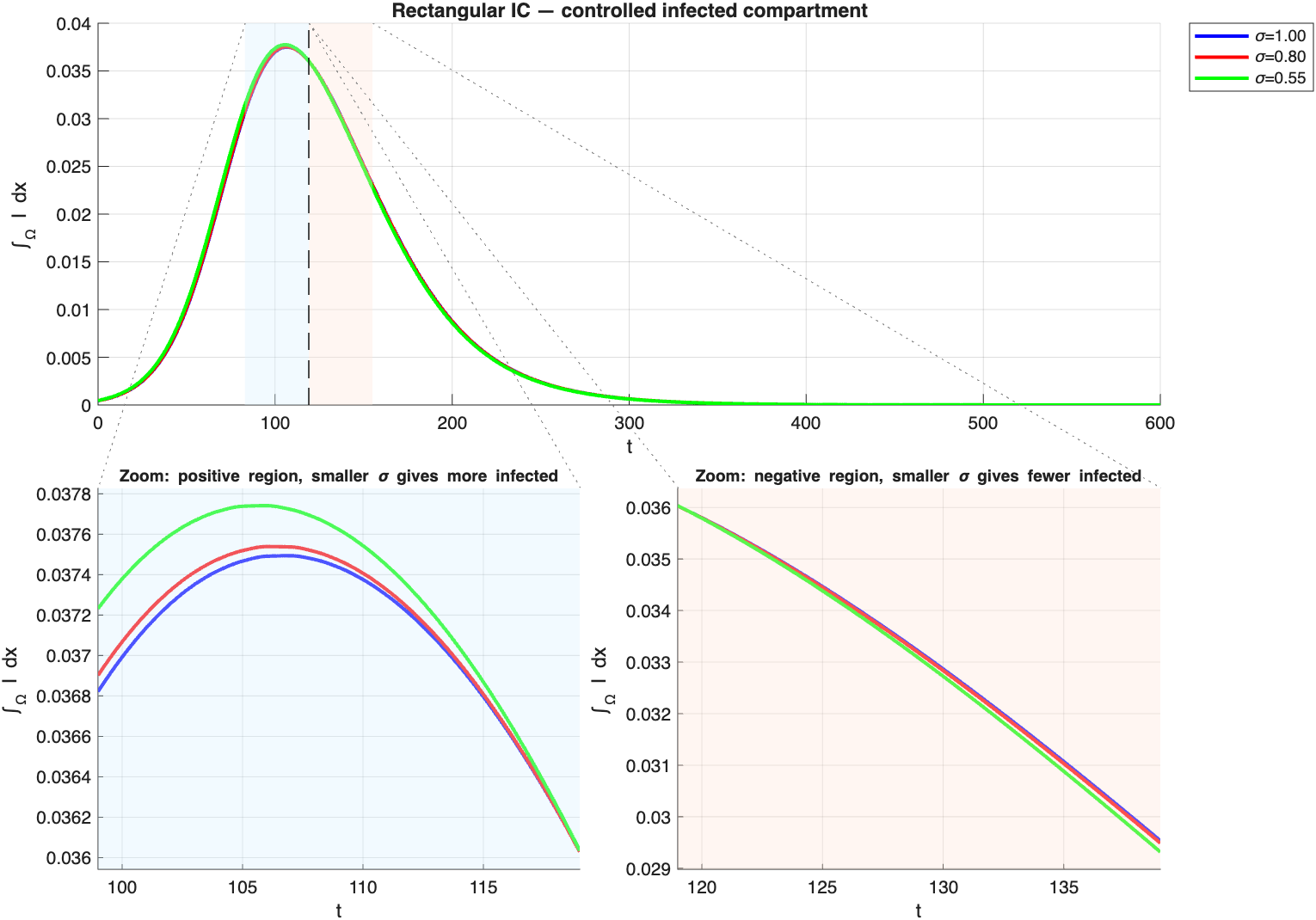}
                \caption{Infected compartment $\int_\Omega i\,dx$ near the
                fractional-order crossover, for $\sigma=1.00,\ 0.80,\ 0.55$. Top
                row: uncontrolled case (circular-corner IC, left; rectangular IC,
                right). Bottom row: controlled case (circular-corner IC, left;
                rectangular IC, right). For each panel, the left zoomed inset shows
                the region where smaller $\sigma$ gives more infected, and the
                right zoomed inset shows the region after the crossover, where
                smaller $\sigma$ gives fewer infected.}
                \label{fig:zoom_panels_combined}
            \end{figure}

            \begin{figure}[htbp!]
                \centering
                \includegraphics[width=0.45\textwidth]{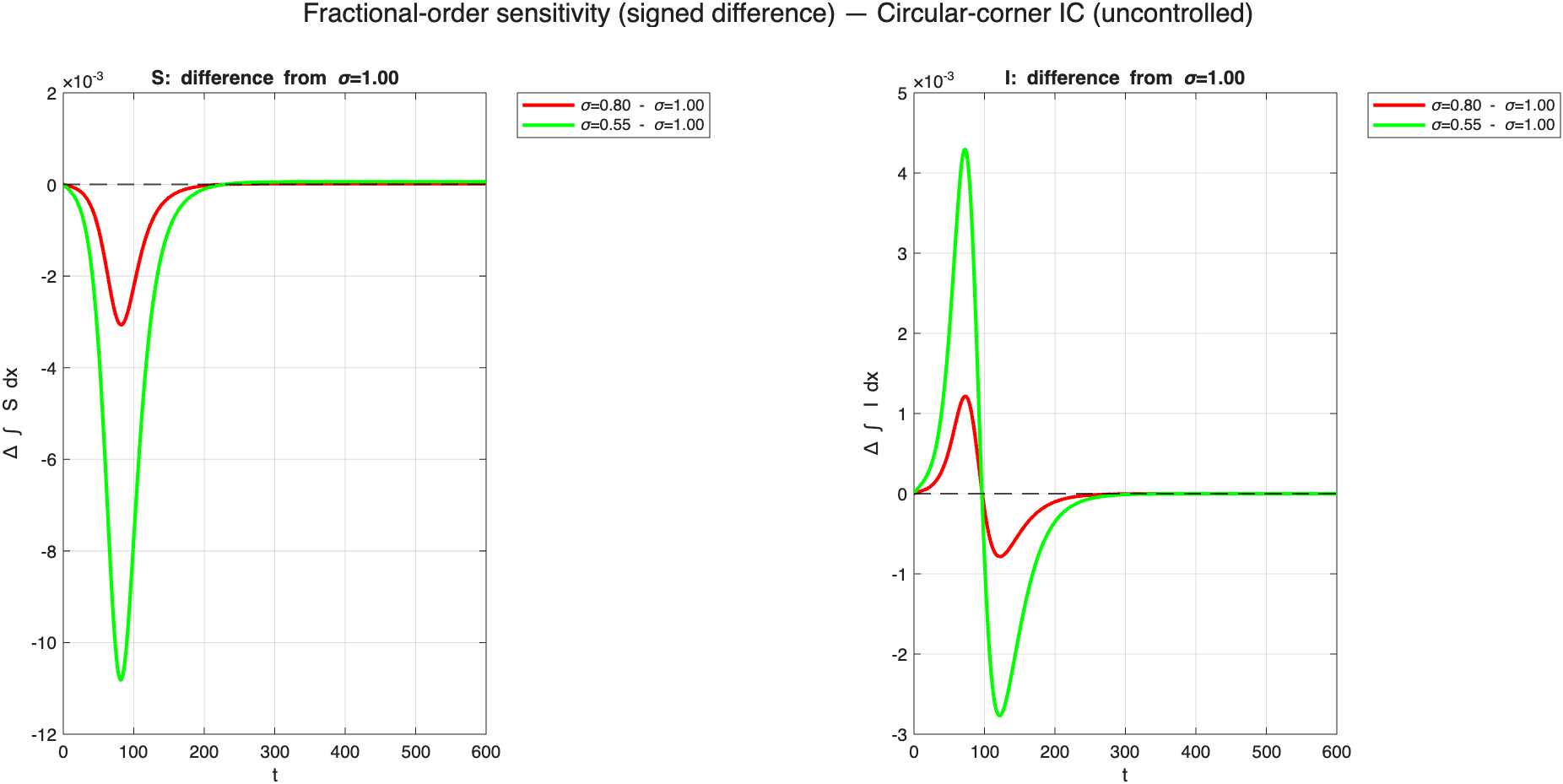}
                \includegraphics[width=0.45\textwidth]{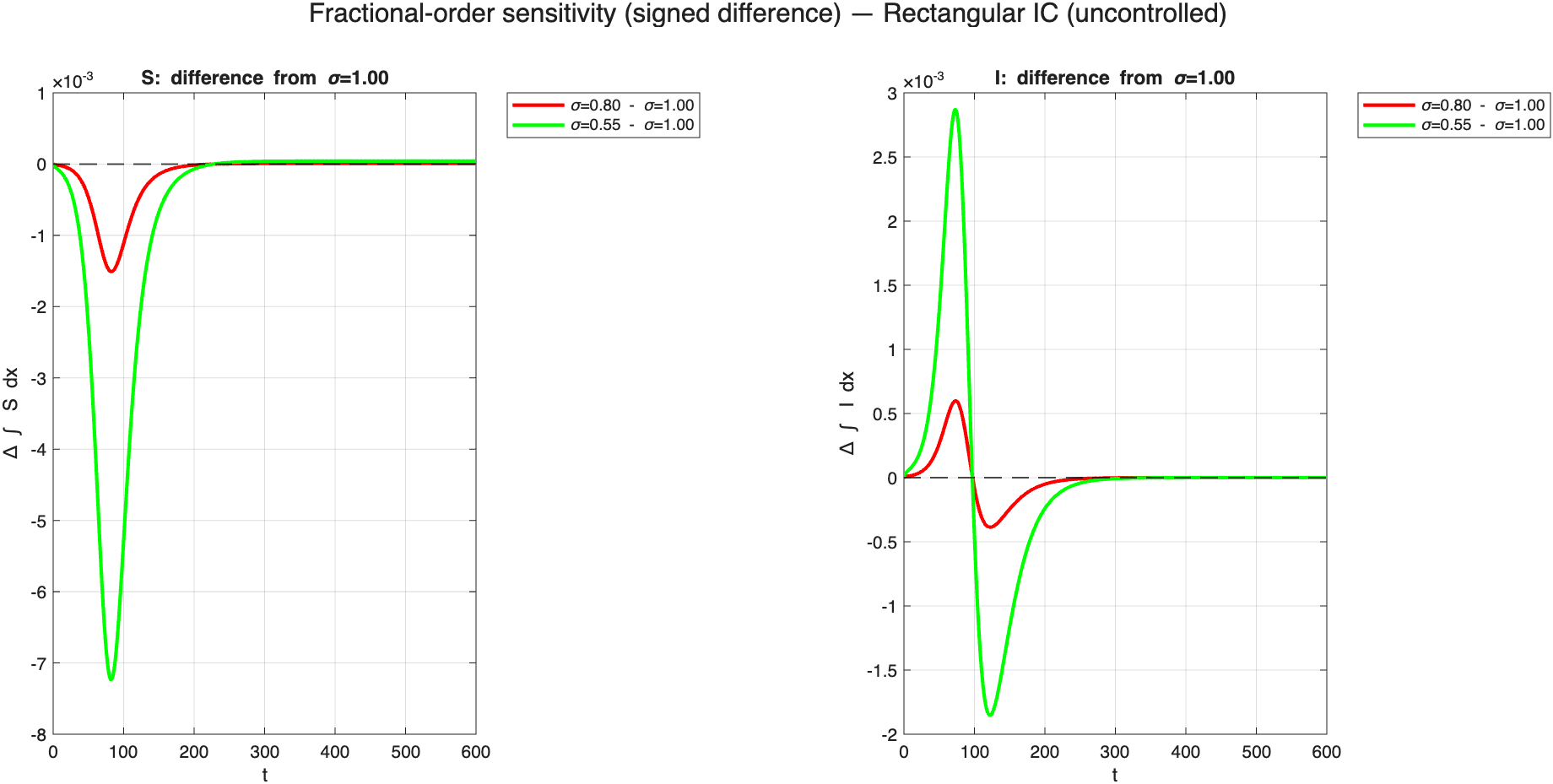}
                \\
                \vspace{0.4cm}
                \includegraphics[width=0.45\textwidth]{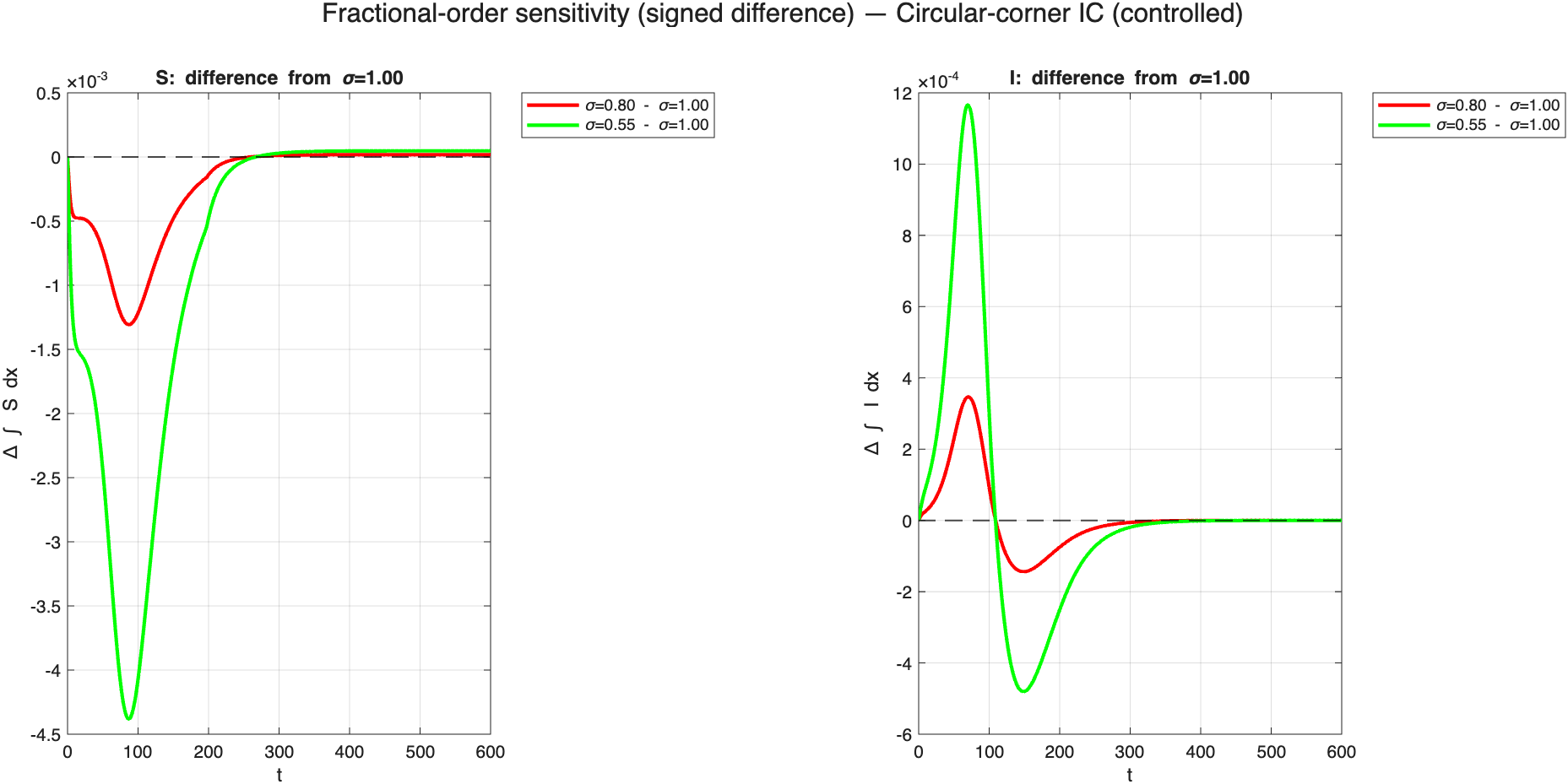}
                \includegraphics[width=0.45\textwidth]{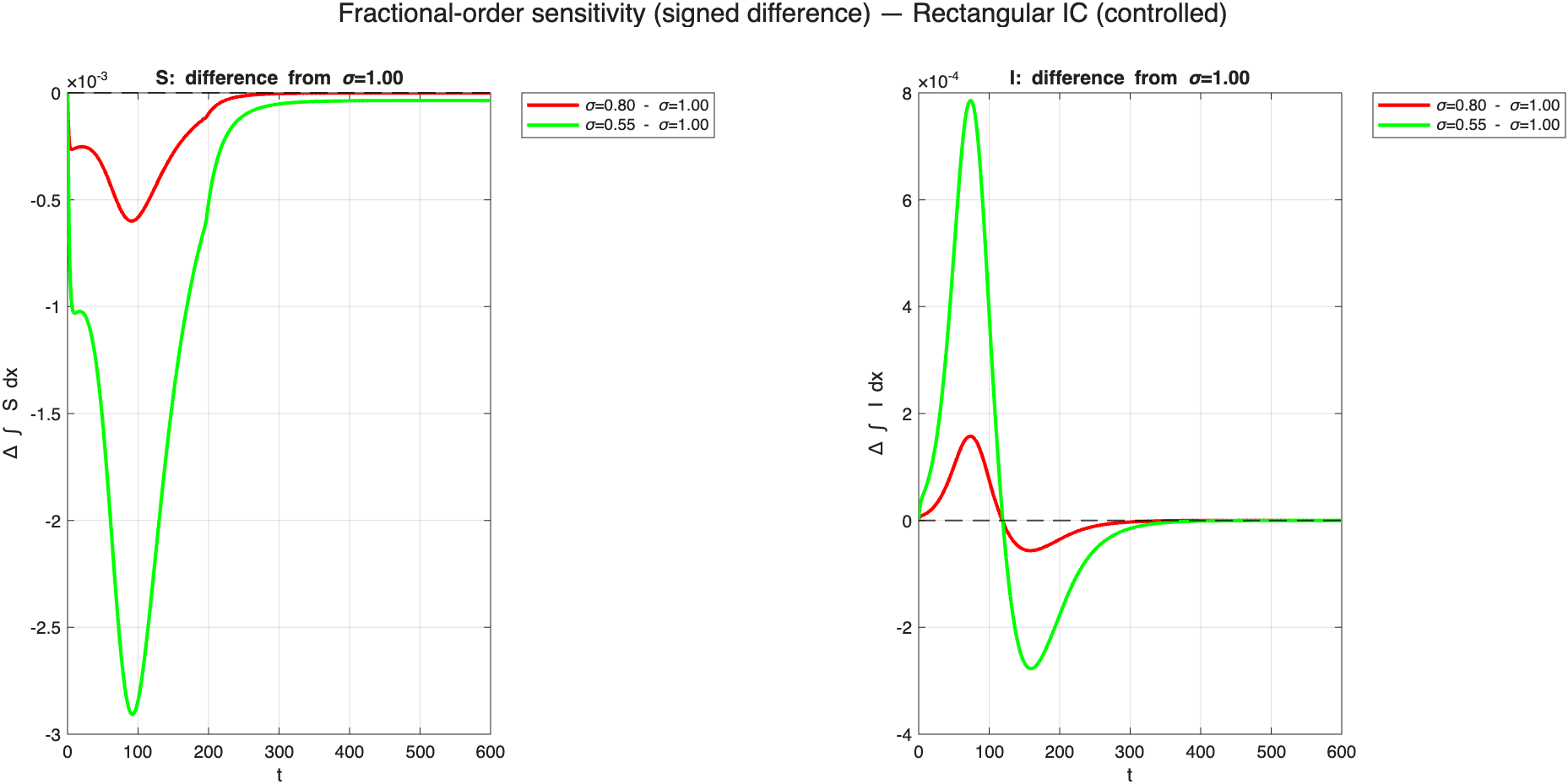}
                \caption{Signed difference of $\|S\|_{L^1(\Omega)}$ and
                $\|I\|_{L^1(\Omega)}$ for $\sigma\in\{0.55,0.80\}$ relative to
                $\sigma=1.00$. Top row: uncontrolled case (circular-corner IC, left;
                rectangular IC, right). Bottom row: controlled case
                (circular-corner IC, left; rectangular IC, right).}
                \label{fig:sensitivity_sigma1_combined}
            \end{figure}

        \subsubsection*{\textbf{Fractional order effects on the infected compartment}}
            The second thing worth noticing is that the model with fractional order $\sigma<1$ induces a known \emph{skewed} temporal behavior in the infected compartment. As seen in Figure \ref{fig:zoom_panels_combined}, for both the circular-corner and rectangular geometries, the infected compartment initially follows the ordering $\sigma=0.55 > \sigma=0.80 > \sigma=1.00$ during the early growth phase of the epidemic, with smaller fractional orders producing a larger infected population. However, there exists some time $t>0$ where this ordering reverses, so that the infected compartment spatial mean follows the ordering $\sigma=0.55 < \sigma=0.80 < \sigma=1.00$ for the remainder of the outbreak. However, the gap between the infected trajectories after the crossover time is noticeably smaller in magnitude than the gap of the pre-crossover infected trajectories, so the trajectories end up much closer together than they were before the crossover. This skewness behavior is also confirmed by the signed difference shown in the bottom row of Figure \ref{fig:sensitivity_sigma1_combined} (controlled case): for both geometries and for both pairs ($\sigma=0.80$ vs.\ $\sigma=1.00$, and $\sigma=0.55$ vs. $\sigma=1.00$), the positive peak of $\Delta\|i\|_{L^1(\Omega)}$ is larger in magnitude than the negative dip that follows it. As expected, this effect is more visible for $\sigma=0.55$ than for $\sigma=0.80$. We also notice that this skewness behavior also appears in the uncontrolled case, as illustrated in the top row of Figure \ref{fig:sensitivity_sigma1_combined}, where there is a change between positive and negative peak. However, we see that in the uncontrolled case, the skewness is still there but less dramatic.
            In particular, the positive peak of the difference of  $\|i\|_{L^1(\Omega)}$ before the crossover time and the negative dip after the crossover time have almost comparable magnitude. Meanwhile, in the controlled case, the negative dip after the crossover is much smaller than the positive peak before the crossover.
            
            The skewness result of the infected compartments, coming from the model with fractional Laplacian has an interesting feature for further observation. As $\sigma$ decreases from $1$, the model with $(-\Delta)^\sigma$ becomes increasingly heavy-tailed, which allows (a small portion of) individuals to travel directly over long spatial distances rather than commuting only to their immediate neighborhood, as in the classical (local) diffusion case $\sigma=1$.

            This results in initially higher infected in the beginning for smaller order fractional $\sigma$ but eventually lower infected towards the end, which happens simply because the high mobility of the infectives, which are allowed to move directly to remote sites, which allow them to reach susceptible individuals located far from the initial outbreak region much sooner than would be possible under classical diffusion model with $\sigma=1$. This produces a sharper and earlier rise in the infected compartment, and explains why the number of infected people with $\sigma=0.55$ exceed the one of $\sigma=0.80$ which also the one of $\sigma=1.00$ during this phase.
            
            Once the susceptible population has been substantially depleted, the epidemic under smaller $\sigma$ has comparatively fewer opportunities for further transmission than the epidemic under higher $\sigma$. This is why the trajectory starts changing its course, and the number of infected compartments related to the model with lower order sigma started to be smaller than the number of infected corresponding to the higher order sigma. This phenomenon is what we see in both uncontrolled case, and the controlled case .

            \begin{figure}[htbp!]
                \centering
                \includegraphics[width=0.4\textwidth]{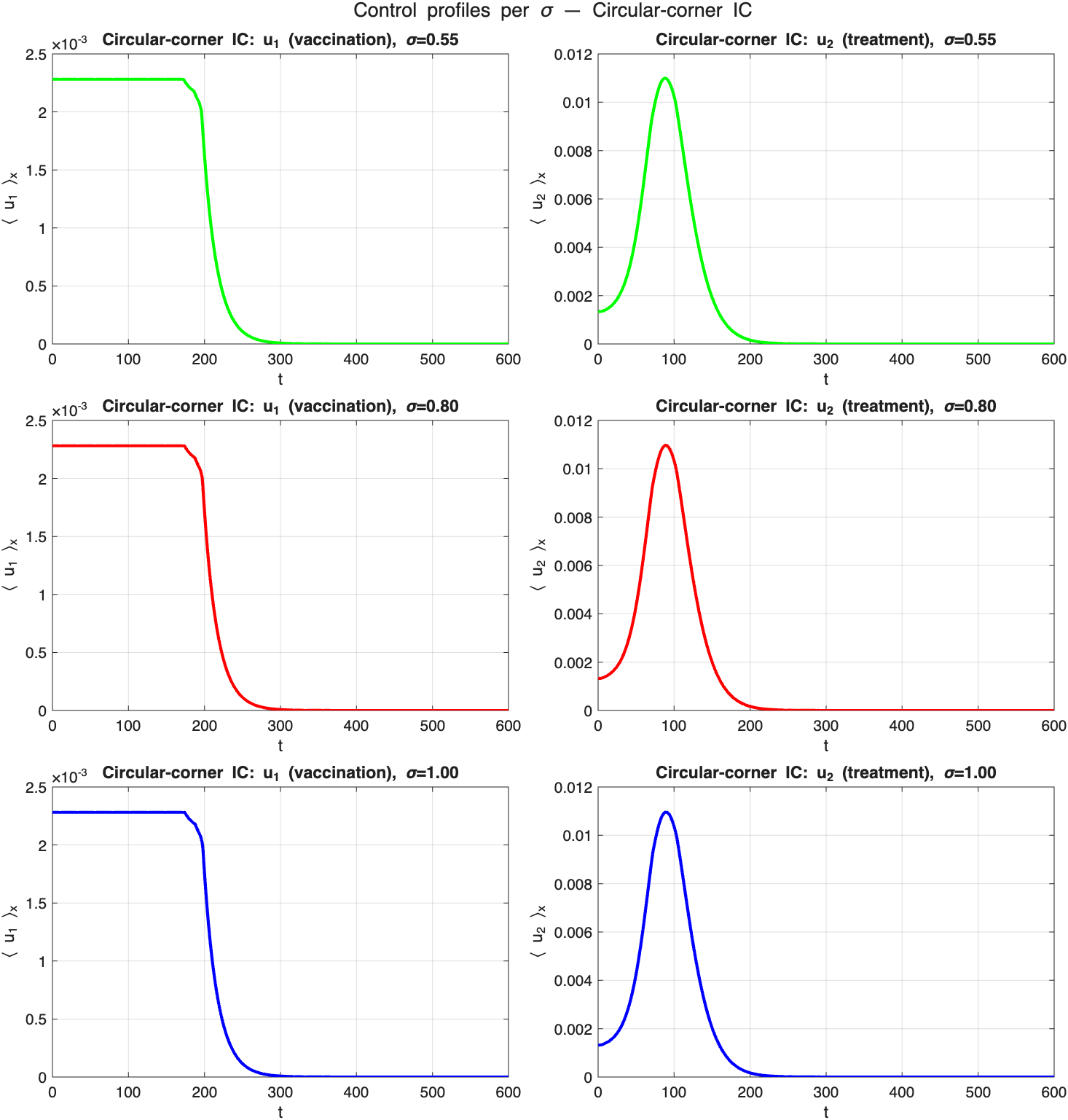}
                \includegraphics[width=0.5\textwidth]{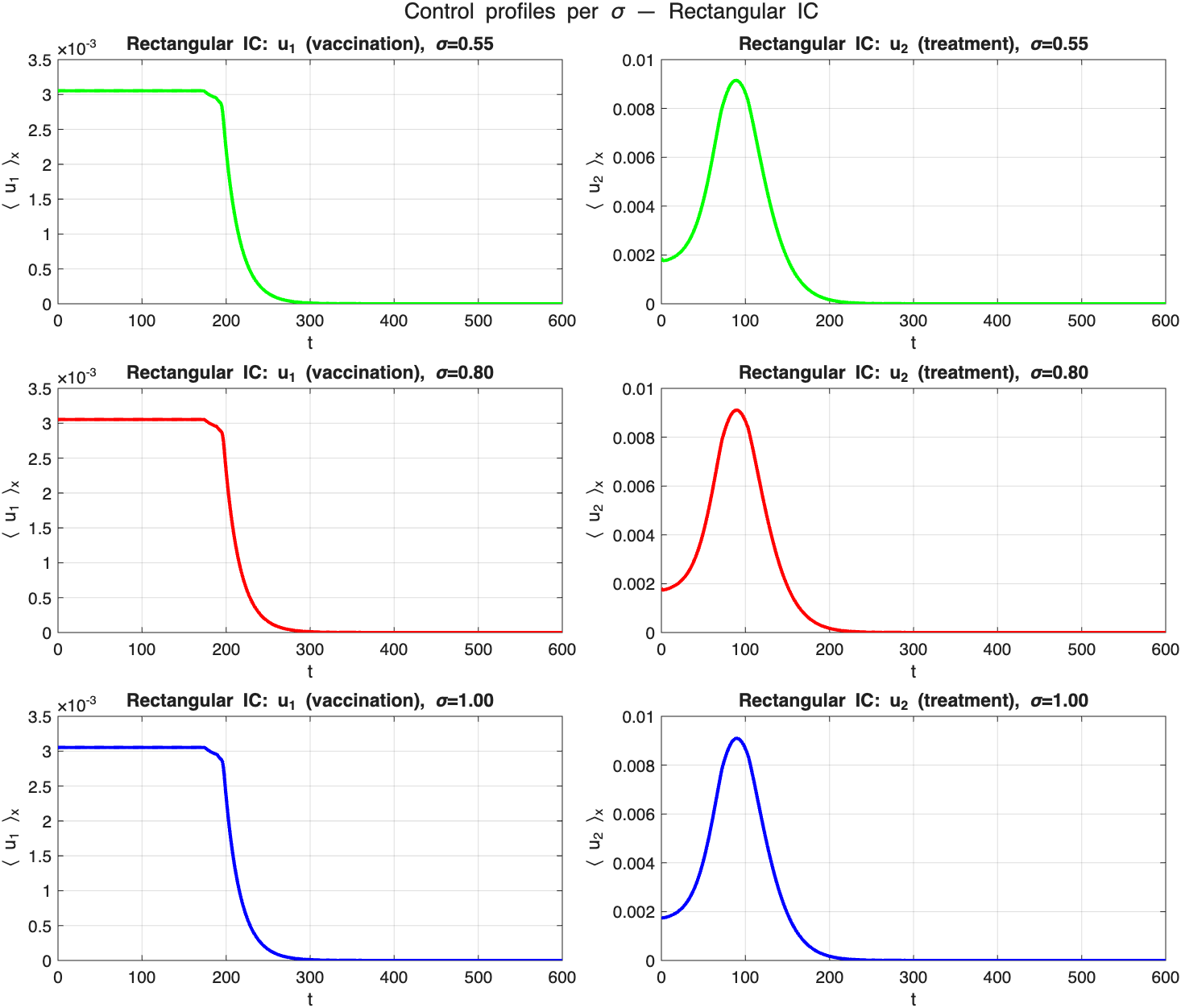}
                \caption{Optimal control rate profiles $\langle u_1\rangle_x(t)$
                (vaccination) and $\langle u_2\rangle_x(t)$ (treatment), shown
                separately for $\sigma=0.55,\ 0.80,\ 1.00$ (top to bottom). Left:
                circular-corner IC. Right: rectangular IC.}
                \label{fig:control_profiles_per_sigma}
            \end{figure}
        
        \subsubsection*{\textbf{Mobility effect and regional vaccination}}
            Now, for the controlled case, what we see is even more interesting. The skewness behavior becomes more visible in the trajectory of the solution. In the early phase of the outbreak, the long-range mobility associated with smaller $\sigma$ allows the infection to reach susceptible individuals located farther away from the initial outbreak region. This is the same mechanism that we observe in the uncontrolled case. Hence, we still obtain the same early-phase ordering $\sigma=0.55>\sigma=0.80>\sigma=1.00$ in the relative sense, even though the controlled trajectories are already much lower than the uncontrolled ones in terms of their absolute values.
            
            At later times, however, this same mobility also affects how the susceptible population interacts with the vaccination region $\chi_\omega$ in \eqref{eq: primal_num}. Although the vaccination region is fixed in space, when $\sigma$ is smaller, susceptible individuals outside $\chi_\omega$ can move into the vaccination region more easily. In contrast, for the local diffusion case $\sigma=1.00$, those susceptible individuals would be less affected by a spatially fixed intervention if they stay far from $\chi_\omega$. Therefore, through the term $-\chi_\omega u_1 s$, a larger part of the susceptible population can be exposed to vaccination when $\sigma$ is smaller.
            
            Over the active period of vaccination, more susceptible individuals are removed when $\sigma$ is smaller. This effect becomes more visible in the long run. Since the mechanism of vaccination is the same for all values of the fractional order $\sigma$, the differences between the infected trajectories after the crossover time have a smaller negative dip compared to the uncontrolled case. On the other hand, the gap before the crossover time is mainly driven by how fast the infection itself spreads to susceptible individuals located far from the initial outbreak region, due to the fractional order $\sigma$. At this stage, the susceptible population is still quite large, and only a small portion of the population has moved to distant sites. Therefore, the transmission-driven difference between the fractional orders is still dominant, while the effect of vaccination is still comparatively small and gradual. This is why the control reduces the post-crossover gap more strongly than the pre-crossover gap. As a result, we obtain a more skewed relative profile in the controlled case, even though the overall infection level is already much lower in absolute value throughout the whole time interval.

        \subsubsection*{\textbf{Discussion and policy recommendation}}
            The results in the previous subsections show that the fractional order $\sigma$ does not only affect how the epidemic spreads without control, but also affects the form of the optimal vaccination strategy once the control is introduced. In practice, vaccines usually become available only after some time, when the local infection is already visible. Therefore, in this setting, our results are more relevant for understanding how a government should distribute the available vaccination effort once the vaccine becomes available.
        
            Since higher mobility, represented by smaller $\sigma$, allows the infection to reach distant susceptible populations more quickly, our results suggest that vaccination should be applied earlier and more strongly for a highly mobile population than for a population with more local diffusion. This also gives a possible implication when the vaccination budget is limited. If the same limited amount of vaccine is available, then giving priority to populations with higher mobility may reduce the total number of infections more effectively than distributing the vaccination effort uniformly. We also note that $\sigma$ in our model is a global parameter describing the mobility level of the population. Hence, this interpretation should be understood as adjusting the vaccination strategy based on the overall mobility profile of the population, not as identifying particular mobile individuals inside the population.

\section*{Acknowledgments}
This work was initiated during the author's doctoral studies at Khalifa University in 2024--2025 and formed part of the author's Ph.D. thesis submitted in July 2025. It was further developed and completed while the author was affiliated with Mohamed bin Zayed University of Artificial Intelligence. The author thanks Prof. Mokhtar Kirane and Prof. Eduardo Cuesta for comments and discussions related to an earlier version of this work, and Prof. Hadi Susanto for discussions and his continuous support that led to the finalisation of this article. The author further thanks Prof. Neil Trudinger for his warm hospitality and generous encouragement during the author's visit to the Australian National University, where this paper was finalised, and Prof. Jiakun Liu for the hospitality during a visit to the University of Sydney. The author also thanks Prof. Rowena Ball, Cale Rankin, Xiaoxu Wu, and the ANU Mathematical Sciences Institute for their guidance and warm welcome during the author's stay in Canberra.

\end{document}